\setlist{topsep=0.5em, itemsep=1em} 
\newcommand{\be}{\begin{equation}}
\newcommand{\ee}{\end{equation}}
\newcommand{\beano}{\begin{eqn*}} 
	\newcommand{\eeano}{\end{eqnarray*}}
\newcommand{\ba}{\begin{array}}
	\newcommand{\ea}{\end{array}}
\declaretheoremstyle[headfont=\normalfont]{normalhead}
\newtheorem{theorem}{Theorem}[section]
\newtheorem{theoremalph}{Theorem}[section]
\newtheorem{lemma}[theorem]{Lemma}
\newtheorem{corollary}[theorem]{Corollary}
\newtheorem{definition}[theorem]{Definition}
\numberwithin{equation}{section}
\begin{document}
\title{$R$-weighted graphs and commutators}
\author{Harish Kishnani}
\email{harishkishnani11@gmail.com}
\address{Indian Institute of Science Education and Research Mohali, Sector 81, Mohali 140306, India}

\author{Amit Kulshrestha}
\email{amitk@iisermohali.ac.in}
\address{Indian Institute of Science Education and Research Mohali, Sector 81, Mohali 140306, India}

\thanks{The authors thank the DST-FIST facility established through grant SR/FST/MS-I/2019/46 to support this research. The first named author acknowledges the support through the Prime Minister's Research Fellowship from the Ministry of Education, Government of India (PMRF ID: 0601097). We are grateful to Anupam Singh for his valuable comments and suggestions.}
\subjclass[2020]{05C25, 20D15, 13B25, 20F10}
\keywords{nilpotent groups, commutators, labeling of graphs}

\begin{abstract}
In this article, we introduce balance equations over commutative rings $R$ and associate $R$-weighted graphs to them so that solving balance equations corresponds to a consistent labeling of vertices of the associated graph. Our primary focus is the case when $R$ is a commutative local ring whose residue field contains at least three elements. In this case, we provide explicit solutions of balance equations. As an application, letting $R$ to be the ring of $p$-adic integers, we examine some necessary and sufficient conditions for a $p$-group of nilpotency class $2$ to have its set of commutators coincide with its commutator subgroup. We also apply our results to study the surjectivity of the Lie bracket in Lie algebras, without any restriction on their dimension and the underlined field.\\
\end{abstract}
\maketitle

\section{Introduction}

In algebra, commutators occur in several contexts. Some of these are--the commutator word $xyx^{-1}y^{-1}$ on a group $G$, the commutator map $(x,y) \mapsto xy-yx$ on associative rings, and the Lie bracket $[x,y]$ of a Lie algebra. While distinct, these commutators often possess common properties. Thus, one may hope to deal with them together. We make one such attempt in this paper. Though our primary focus is on commutators in groups and Lie algebras, the techniques developed in this paper have applications in wider contexts.
\subsection{Commutators in groups}
Let $G$ be a group, and let $K(G)$ denote the set of commutators of $G$. Let $G'$ be the subgroup generated by $K(G)$. A well-studied classical problem is to determine conditions on $G$ that ascertain $K(G) = G'$. The case of non-abelian finite simple groups is extremely interesting and challenging. Thanks to the work of Ore\cite{Ore-1951}, Ito\cite{ito-1951}, Thompson\cite{Thompson-1961, Thompson-1962(a), Thompson-1962(b)}, Gow\cite{Gow-1988}, Bonten\cite{Bonten-1992}, Neubüser--Pahlings--Cleuvers\cite{Neubüser-Pahlings-Cleuvers-1984}, Ellers--Gordeev\cite{Ellers-Gordeev_1998}, and Liebeck--O'Brien--Shalev--Tiep\cite{Liebeck-O'Brien-Shalev-Tiep-2010}, we now know that $K(G) = G'$ in this case; we refer to \cite{Malle_Ore-Conjecture} for a survey on this.

For the groups that are not simple, this problem is still wide open. An excellent survey in this direction is \cite{Kappe_Morse_2007}. The case of $p$-groups is particularly interesting. It is known that if $G$ is a $p$-group with $p > 3$ and $G'$ has a generating set containing $3$ elements, then $K(G) = G'$ \cite{Guralnick_1982, Heras_2020}. In \cite{Guralnick_1982}, some
examples of groups with $K(G) \neq G'$ are constructed and it is shown that such examples are not possible if $|G| < 96$, or $|G'| < 16$. Further, it is shown that these bounds cannot be improved. 

For $p$-groups, the existing studies often impose the following restrictions.

\begin{enumerate}
\item[(1).] \emph{Restrictions concerning the order of the group}. The problem is well studied for $p$-groups of order at most $p^7$ (see \cite{Kaushik_Yadav_2023}), but not much information is available for groups of higher order.

\item[(2).] \emph{Restrictions concerning the order of the derived subgroup}. There are results for $p$-groups for which the order of $G'$ is at most $p^4$ (see \cite{Kaushik_Yadav_2021}), but not much is known otherwise.

\item[(3).] \emph{Restrictions concerning the size of generating sets of the derived subgroup}. The equality $G' = K(G)$ is known to hold for $p$-groups whose derived subgroup is generated by $3$ elements (see \cite{Guralnick_1982} and \cite{Heras_2020}). However, for other groups, no general results are known in this direction.
\end{enumerate}

In this article, we study this problem for nilpotent groups of class $2$ through a different approach. Let $g \in G'$ and $Z(G)$ be the center of $G$. Given a generating set $B_G$ of the factor group $G/Z(G)$, we express $g$ as a product of commutator powers $[g_i,g_j]^{d_{i,j}}$, where $d_{i,j} \in \mathbb Z$, and $g_i Z(G), g_j Z(G) \in B_G$. Depending on the integers $\{d_{i,j}: i < j\}$, we construct a weighted graph $\Gamma(D)$ that determines whether $g$ is a commutator in $G$ or not. This investigation relies on the concept of \emph{bad cycles} in non-weighted graphs and
\emph{unfavorable proximity} of bad cycles in a weighted graph. These notions are introduced in Definitions \ref{bad cycle} and \ref{unfavorable proximity}. The graph $\Gamma(D)$ depends not only on $g$, but on the choices $d_{i,j} \in \mathbb Z$ with 
$g = \prod_{i < j}[g_i,g_j]^{d_{i,j}}$ too.

Our main results are as follows.

\begin{theoremalph}\label{theorem in intro for g being a commutator, arbitrary p}
Let $p \ne 2$ and $G$ be a $p$-group of nilpotency class $2$. Let $g = \prod_{i < j}[g_i,g_j]^{d_{i,j}} \in G'$, where $g_iZ(G), g_jZ(G) \in B_G$, be such that the graph $\Gamma(D)$ does not contain bad cycles. Then $g$ is a commutator in $G$. (Theorem \ref{without bad cycle for p-groups elementwise})
\end{theoremalph}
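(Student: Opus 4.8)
The plan is to translate the group-theoretic question "is $g$ a commutator?" into the language of balance equations over the ring $R=\mathbb Z_p$ of $p$-adic integers, and then invoke the solvability criterion that (as promised by the abstract) the paper develops for $R$-weighted graphs whose residue field has at least three elements. Since $p\neq 2$, the residue field $\mathbb F_p$ has at least three elements, so the machinery applies. Concretely, for a class-$2$ $p$-group $G$ with $g=\prod_{i<j}[g_i,g_j]^{d_{i,j}}$, I would write a general candidate commutator $[x,y]$ with $x=\prod_i g_i^{a_i}\bmod Z(G)$ and $y=\prod_i g_i^{b_i}\bmod Z(G)$, using bilinearity and antisymmetry of the commutator (valid because $G$ has class $2$, so $[\,\cdot\,,\cdot]$ factors through $G/Z(G)$ and is $\mathbb Z$-bilinear into $G'$). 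The equation $[x,y]=g$ then becomes a system of equations in the unknown exponents $a_i,b_i$, one equation per generator of $G'$, with coefficients built from the structure constants $c_{i,j}$ defined by $[g_i,g_j]=\prod_k (\text{basis of }G')^{c_{i,j,k}}$. This is precisely a balance-equation system, and the integers $d_{i,j}$ are exactly the weights placed on the edges of $\Gamma(D)$.

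The key steps, in order, are as follows. First I would make the reduction to $G/Z(G)$ and $G'$ explicit, recording that the commutator pairing $\langle x,y\rangle=[x,y]$ is alternating and $\mathbb Z$-bilinear, so that setting $x=\sum a_i \bar g_i$, $y=\sum b_i \bar g_i$ gives $[x,y]=\prod_{i<j}[g_i,g_j]^{a_ib_j-a_jb_i}$. Second, I would identify the exponents $e_{i,j}:=a_ib_j-a_jb_i$ as the quantities that must match $d_{i,j}$ modulo the relations in $G'$; the requirement that such $e_{i,j}$ arise from a single pair of vectors $(a_i),(b_i)$ is exactly the consistency condition encoded by $\Gamma(D)$. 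Third, I would invoke the earlier solvability theorem: the hypothesis that $\Gamma(D)$ contains no bad cycle is precisely the combinatorial condition under which the balance equations admit a solution over $\mathbb Z_p$ (this is where the "residue field has $\geq 3$ elements" hypothesis, guaranteed by $p\neq 2$, is used). Fourth, I would lift the $\mathbb Z_p$-solution back to honest integer exponents $a_i,b_i$ and verify that the resulting $[x,y]$ equals $g$ in $G$, completing the argument that $g\in K(G)$.

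The main obstacle I anticipate is the passage from the existence of a formal solution of the balance equations over $\mathbb Z_p$ to an actual pair of group elements $x,y$ whose commutator is $g$. Two subtleties arise here. One is that the products $\prod_{i<j}[g_i,g_j]^{e_{i,j}}$ and $\prod_{i<j}[g_i,g_j]^{d_{i,j}}$ need only agree as elements of $G'$, not as formal exponent tuples, because the $[g_i,g_j]$ may satisfy relations; so the "balance equations" must be set up modulo these relations, and I must check that the no-bad-cycle hypothesis is strong enough to solve the system after imposing them. The second, more delicate point is why $p$-adic integer solutions suffice: since $G$ is a finite $p$-group, the exponents live in finite cyclic $p$-groups, so a solution over $\mathbb Z_p$ reduces modulo a suitable power of $p$ to a solution over $\mathbb Z/p^n\mathbb Z$, which is exactly what is needed. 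I expect the bulk of the work—and the place where the definitions of \emph{bad cycle} and \emph{unfavorable proximity} do their job—to be in certifying that no-bad-cycle is equivalent to solvability of this reduced system, but since that equivalence is exactly the content of the earlier theorem (Theorem \ref{without bad cycle for p-groups elementwise}'s abstract counterpart), the present proof should reduce to carefully assembling that result with the bilinearity reduction above.
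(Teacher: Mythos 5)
Your proposal is correct and follows essentially the same route as the paper: reduce via bilinearity of the commutator on the class-$2$ quotient to the balance equations $x_iy_j-x_jy_i=d_{i,j}$ over $\mathbb Z_p$, invoke the consistent-labeling theorem for local rings with at least three residue classes (which is where $p\neq 2$ enters), and reduce the $\mathbb Z_p$-solution modulo $\exp(G)$ to produce actual group elements. The only detail you leave implicit is the separate treatment of the triangle $\Gamma(D)=C_3$, which the paper handles with its dedicated lemma for local rings with principal maximal ideal (satisfied by $\mathbb Z_p$).
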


\begin{theoremalph}\label{theorem in intro for g not a commutator}
Let $G$ be a $p$-group of nilpotency class $2$. Let $g \in G'$ be such that the graph $\Gamma(D)$ contains a bad cycle with unfavorable proximity for each choice $\{d_{i,j} : i < j\}$ with $g= \prod_{i < j}[g_i,g_j]^{d_{i,j}}$; $g_iZ(G), g_jZ(G) \in B_G$. Then $g$ is not a commutator in $G$. (Theorem \ref{Not in the image of commutator map on p-groups})
\end{theoremalph}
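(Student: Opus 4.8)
The plan is to argue by contradiction: I assume $g$ is a commutator and then produce one representation $g=\prod_{i<j}[g_i,g_j]^{d_{i,j}}$ whose graph $\Gamma(D)$ carries no bad cycle with unfavorable proximity, contradicting the hypothesis that \emph{every} admissible choice produces such a cycle. First I would write $g=[x,y]$ for some $x,y\in G$. Since $G$ has nilpotency class $2$, the subgroup $G'$ is central and the commutator map $G\times G\to G'$ is bilinear with $[g_i,g_j]=[g_j,g_i]^{-1}$. Writing $xZ(G)=\prod_k g_k^{a_k}$ and $yZ(G)=\prod_k g_k^{b_k}$ over the generating set $B_G$, with $a_k,b_k\in\mathbb Z$, bilinearity collapses the double product into
$$
g=[x,y]=\prod_{i<j}[g_i,g_j]^{a_ib_j-a_jb_i}.
$$
Hence $d_{i,j}:=a_ib_j-a_jb_i$ is an admissible choice realizing $g$, and the edge weights $d_{i,j}$ are exactly the $2\times2$ minors of the $2\times n$ matrix with rows $(a_k)$ and $(b_k)$.

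The heart of the argument is to translate this minor structure into a consistent labeling of the vertices of $\Gamma(D)$, i.e.\ a solution of the associated balance equations. On the portion of the graph where the scalars $a_i$ are units of the relevant local ring, setting $t_i:=b_i a_i^{-1}$ rewrites each weight as $d_{i,j}=a_ia_j(t_j-t_i)$; the differences $t_j-t_i$ then telescope to $0$ around every cycle, which is precisely the consistency condition recorded by the labels $t_i$. Since the obstruction measured by unfavorable proximity is by construction the failure of these balance equations to admit a solution, a factorization of minor type always solves them, so for this particular $D$ every cycle sits in favorable proximity and $\Gamma(D)$ has no bad cycle with unfavorable proximity. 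This contradicts the hypothesis and forces $g\notin K(G)$. Carrying the labeling out over $\mathbb Z_p$ lets me handle the orders of all the $[g_i,g_j]$ simultaneously, so that the conclusion is uniform across the $p$-power torsion and requires no restriction on $p$ (unlike the companion statement for the no-bad-cycle condition).

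The main obstacle is the bookkeeping when some scalars $a_i$ are non-units, that is divisible by $p$, or outright zero. In that regime $t_i=b_i a_i^{-1}$ is not directly available, and one must argue with the full skew form $a_ib_j-a_jb_i$ while tracking $p$-adic valuations against the orders of the individual $[g_i,g_j]$. The definition of unfavorable proximity is engineered to be invariant under the rescalings $(a_i,b_i)\mapsto(\lambda a_i,\lambda b_i)$ and under the modular reductions imposed by the finite commutator orders, so the decisive point is to check that a genuine minor-type factorization escapes unfavorable proximity \emph{around every cycle}, including cycles passing through vertices with non-unit scalars. This is exactly where I would invoke the explicit solutions of the balance equations over a local ring whose residue field has at least three elements, using that extra room in the residue field to realize the labels consistently even after the degenerate scalars are cleared.
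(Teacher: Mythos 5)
Your overall strategy coincides with the paper's: assume $g=[x,y]$, expand $x$ and $y$ over $B_G$ to get the presentation $d_{i,j}=a_ib_j-a_jb_i$, observe that $v_k\mapsto(a_k,b_k)$ is then a consistent labeling of $\Gamma(D)$, and derive a contradiction with the hypothesis that every presentation exhibits a bad cycle with unfavorable proximity. The reduction from the group statement to the labeling statement is correct and matches Section 4 of the paper.

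The gap is in the step you dispose of with the phrase that unfavorable proximity is ``by construction the failure of these balance equations to admit a solution.'' That is not the definition: unfavorable proximity (Definition \ref{unfavorable proximity}) is a purely arithmetic condition on residues of the edge weights --- $[d]=[0]$ on all cycle edges except $e_{r-1,r}$, and $[d]\neq[0]$ on $e_{r-1,r}$ and on the edges joining the cycle to the proximity vertices. The incompatibility of this configuration with solvability is exactly Theorem \ref{consistent labeling doesn't exist}, and it requires a genuine argument. Your telescoping of $t_j-t_i=b_ja_j^{-1}-b_ia_i^{-1}$ around the cycle establishes it only when every $a_i$ on the cycle is a unit. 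The paper's proof gets to that situation precisely by using the proximity edges: each cycle vertex $v_i$ ($i\le r-2$) meets a pendant edge of unit weight, so $([a_i],[b_i])\neq([0],[0])$; a propagation argument along the zero-weight cycle edges then shows that if some $[a_j]=[0]$ one is forced into $[d_{r-1,r}]=[0]$, a contradiction, whence all $a_i$ are units and the telescoping sum applies. Your proposal never uses the proximity vertices at all, yet they are the whole point of the definition --- a bad cycle with the same weight pattern but without the unit-weight pendant edges does not obstruct a consistent labeling. Finally, your suggestion to handle the non-unit regime by invoking ``the explicit solutions of the balance equations over a local ring whose residue field has at least three elements'' points at the wrong tool: that machinery (Lemma \ref{labeling-a-cycle} through Theorem \ref{without bad cycle}) constructs labelings and is what needs $|R/\mathfrak m|\geq 3$; the obstruction theorem needs no such hypothesis and must not, since --- as you yourself note --- the present statement holds for $p=2$, where $\mathbb Z_2/2\mathbb Z_2=\mathbb F_2$.
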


These results are independent of the above restrictions and hold for some infinite $p$-groups of nilpotency class $2$, too. We use them to construct groups with $K(G) = G'$ and $K(G) \neq G'$.

For a $p$-group $G$ of nilpotency class $2$, with a finite generating set $B_G$ of $G/Z(G)$, we associate a (non-weighted) graph $\Gamma(B_G)$. If $p$ is odd, the Corollary \ref{Without bad cycle for whole group} guarantees $K(G) =G'$, provided $\Gamma(B_G)$ does not contain bad cycles. With some essential assumptions, Theorem \ref{iff condition for p-groups} establishes the converse, and thus we obtain a characterization of $p$-groups of nilpotency class $2$ with $K(G) = G'$. As an application, in Corollary \ref{Infinite examples for K(G) not equal to G'} we construct infinitely many groups with $K(G) \ne G'$. A weaker version of Theorem \ref{theorem in intro for g being a commutator, arbitrary p} and Corollary \ref{Without bad cycle for whole group} is proved for $p=2$ case in Theorem \ref{Arbitrary p-groups elementwise} and Corollary \ref{Arbitrary $p$-group}, respectively. Remark \ref{Remark showing p=2 case is same for small groups} shows that the assumption $p \ne 2$ can be dropped in Theorem \ref{theorem in intro for g being a commutator, arbitrary p} and Corollary \ref{Without bad cycle for whole group} for groups of small orders. 

Our core idea lies in solving a system of balance equations on local rings and expressing it in terms of a consistent labeling on graphs constructed out of our groups, in such a way that the existence of consistent labeling on these graphs corresponds to $K(G) = G'$. This idea extends to a study of the surjectivity of alternating bilinear maps as well, and in particular, to the study of commutators in Lie algebras. 


\subsection{Commutators in Lie algebras}
Let $L$ be a Lie algebra over a field $F$ and let $L'$ be its derived subalgebra. Let $[L,L]:=\{[x,y]: x,y \in L\}$. It has been a problem of great interest to determine the cases when $[L,L] = L'$. It is clear that if $[L,L] = L$, then $[L,L]=L'$.
In \cite{Brown_1963}, Brown proved that $[L,L] = L$ for any finite-dimensional complex simple Lie algebra. Akhiezer extended this result to most of the finite-dimensional simple real Lie algebras \cite{Akhiezer_2015}, but the problem is still open in this case for an arbitrary finite-dimensional simple real Lie algebra. For a finite-dimensional nilpotent Lie algebra with $\dim (L') \leq 4$, this problem has been investigated in \cite{Niranjan-Rani_2023}. In \cite{DKR_2021} and \cite{KMR_2024}, the authors have given examples of infinite-dimensional simple Lie algebras with $[L,L] \ne L$. Our results on consistent labeling apply to all Lie algebras without any restriction on their dimension and the underlined field. 

Let $x \in L'$. Given a basis of $L/Z(L)$, we express $x$ as a linear sum of the Lie bracket in elements of this basis. Let $x = \sum_{1 \leq i < j \leq r}{d_{i,j}}[u_i,u_j]$, where $d_{i,j} \in \mathbb F$ and $u_i, u_j$ are the basis elements of $L/Z(L)$. Depending on the scalers $\{d_{i,j}: i < j\}$, we construct a weighted graph $\Gamma(D)$ that determines whether $x \in [L,L]$ or not. The graph $\Gamma(D)$ depends not only on $x$, but also on the choices $d_{i,j} \in \mathbb F$ with $x = \sum_{1 \leq i < j \leq r}{d_{i,j}}[u_i,u_j]$. Using balance equations and consistent labeling on the corresponding graphs, we obtain the following results.

\begin{theoremalph}
Let $F \neq \mathbb{F}_2$ be a field and $L$ be a Lie algebra over $F$ having a countable Hamel basis. Let $x = \sum_{1 \leq i < j \leq r}{d_{i,j}}[u_i,u_j] \in L'$ be such that the graph $\Gamma(D)$ does not contain bad cycles. Then $x \in [L,L]$.
(Theorem \ref{without bad cycle for Lie algebras})
\end{theoremalph}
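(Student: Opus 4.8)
The plan is to reduce the assertion $x \in [L,L]$ to the solvability of the system of balance equations attached to $\Gamma(D)$ over $F$, and then to invoke the consistent-labeling machinery for $R$-weighted graphs in the case $R = F$.

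First I would record the elementary but crucial observation that the bracket $[a,b]$ depends only on the images of $a,b$ in $L/Z(L)$: if $z \in Z(L)$ then $[a+z,b]=[a,b]$. Fixing lifts in $L$ of the chosen basis of $L/Z(L)$ and writing a prospective solution as $a = \sum_k s_k u_k$ and $b = \sum_k t_k u_k$ with $s_k,t_k \in F$, bilinearity and antisymmetry of the bracket give
\[
[a,b] = \sum_{i<j}(s_i t_j - s_j t_i)\,[u_i,u_j].
\]
Because $x = \sum_{1\le i<j\le r} d_{i,j}[u_i,u_j]$ involves only $u_1,\dots,u_r$, it suffices to find scalars $(s_i,t_i)_{1\le i\le r}$ with $s_i t_j - s_j t_i = d_{i,j}$ for all $1\le i<j\le r$ and to set $s_k=t_k=0$ for $k>r$; the latter choice automatically satisfies every balance equation $s_i t_j - s_j t_i = d_{i,j}=0$ having an index $>r$. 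This is exactly the balance system associated with $\Gamma(D)$. I would stress that \emph{coefficient-wise} matching of the (possibly linearly dependent) brackets $[u_i,u_j]$ already forces $[a,b]=x$, so no information about the relations holding in $L'$ is needed; the countability of the Hamel basis is used only to index the basis cleanly and to discard the infinitely many vanishing equations, leaving a finite balance system on $\{1,\dots,r\}$.

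Next I would observe that the coefficient field $F$ is a commutative local ring whose residue field is $F$ itself, and that the hypothesis $F\ne\mathbb{F}_2$ guarantees $|F|\ge 3$, so the standing assumption that the residue field have at least three elements is met. I would then appeal to the explicit solution of balance equations established earlier in the paper for such rings: since $\Gamma(D)$ contains no bad cycles, a consistent labeling of its vertices exists, and such a labeling is precisely a tuple $(s_i,t_i)$ solving the balance system. The parallelism constraints arising from the non-edges (pairs with $d_{i,j}=0$) are built into this system, and it is the absence of bad cycles that precludes any cyclic incompatibility among the prescribed cross-products $s_it_j-s_jt_i=d_{i,j}$ and the forced parallelisms. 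This is the step I would import wholesale rather than reprove.

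Finally, given such a labeling I would set $a = \sum_{i=1}^r s_i u_i$ and $b = \sum_{i=1}^r t_i u_i$ in $L$ and read off $[a,b]=x$ from the displayed identity, concluding $x\in[L,L]$. The genuine mathematical difficulty lies entirely in the imported step—producing the consistent labeling from the no-bad-cycle hypothesis over a field with at least three elements—so within this proof the only points requiring care are the reduction to a finite balance system and the verification that coefficient-wise matching, rather than any finer structure of $L'$, suffices. I expect the main subtlety to be confirming that the earlier balance-equation theorem applies verbatim with $R=F$, including that the non-edge parallelism equations are handled by the same no-bad-cycle condition governing the weighted edges.
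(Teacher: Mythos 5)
Your proposal is correct and follows essentially the same route as the paper: reduce membership of $x$ in $[L,L]$ to a consistent labeling of the weighted graph $\Gamma(D)$ (equivalently, a solution of the balance system over the field $F$, viewed as a local ring with residue field of size at least $3$), and then import the no-bad-cycle labeling results of \S\ref{Labeling of Graphs}. The only detail worth noting is that the imported step is really the combination of Theorem \ref{without bad cycle} with Lemma \ref{consistent labeling for a triangle} (the latter covering the excluded triangle case, which applies since the maximal ideal $(0)$ of a field is principal), exactly as the paper does.
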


\begin{theoremalph}
Let $F$ be a field and $L$ be a Lie algebra over $F$ having a countable Hamel basis. Let $x \in L'$ be such that the graph $\Gamma(D)$ contains a bad cycle with unfavorable proximity for each choice $\{d_{i,j}:i < j\}$ with $x = \sum_{1 \leq i < j \leq r}{d_{i,j}}[u_i,u_j]$. Then $x \notin [L,L]$.
(Theorem \ref{Not in the image of Lie bracket})
\end{theoremalph}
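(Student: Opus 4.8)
The plan is to prove the contrapositive: assuming $x \in [L,L]$, I will exhibit one representation $x = \sum_{i<j} d_{i,j}[u_i,u_j]$ whose associated graph $\Gamma(D)$ has \emph{no} bad cycle with unfavorable proximity. Since the hypothesis asserts that \emph{every} such representation yields a bad cycle with unfavorable proximity, this is a contradiction, and hence $x \notin [L,L]$.

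So suppose $x = [a,b]$ with $a,b \in L$. Using that the bracket factors through $L/Z(L)$ — because $[a+z,\,b+w] = [a,b]$ for central $z,w$ — I write the images of $a,b$ in $L/Z(L)$ in the chosen basis as $\bar a = \sum_i a_i u_i$ and $\bar b = \sum_j b_j u_j$. These are finite sums, since each element of $L$, and hence of $L/Z(L)$, is a finite combination of the countable Hamel basis; thus only finitely many indices, say up to $r$, occur. Bilinearity and antisymmetry of the bracket then give
\[
x = [a,b] = \sum_{1 \le i < j \le r} (a_i b_j - a_j b_i)\,[u_i,u_j],
\]
so that $D^\ast := (d_{i,j})$ with $d_{i,j} = a_i b_j - a_j b_i$ is a legitimate choice representing $x$, and is therefore subject to the hypothesis.

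The crux is to show that this particular $D^\ast$ yields a graph $\Gamma(D^\ast)$ containing no bad cycle with unfavorable proximity. Here I use that $D^\ast$ manifestly solves the balance equations: labelling each vertex $i$ by $(a_i,b_i) \in F^2$ makes every edge relation $a_i b_j - a_j b_i = d_{i,j}$ hold by construction, so $\Gamma(D^\ast)$ admits a consistent labeling. By the obstruction result underlying the theorem, read contrapositively — a bad cycle with unfavorable proximity forces the balance equations to be unsolvable, i.e.\ forbids any consistent labeling — the presence of a consistent labeling rules out such a cycle. This is the exact analogue, over the field $F$ regarded as a (local) ring, of the group statement Theorem \ref{Not in the image of commutator map on p-groups}. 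Hence $D^\ast$ violates the hypothesis, and the assumption $x \in [L,L]$ is untenable.

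The main obstacle is precisely the combinatorial implication invoked in the previous paragraph: that a consistent labeling (equivalently, realizability of $D^\ast$ as a single wedge $a \wedge b$) rules out a bad cycle with unfavorable proximity. This is the hard, weight-sensitive direction of the balance-equation theory, since a bad cycle with unfavorable proximity is exactly the configuration along which the cyclic balance relation forced by any labeling cannot be satisfied over $F$. Notably this direction is field-independent, requiring no hypothesis such as $F \neq \mathbb{F}_2$, which is consistent with the theorem holding for every $F$; the countable Hamel basis is used only to guarantee that $a$ and $b$ have finite support, so that $D^\ast$ is a genuine finite representation of $x$ and thus falls within the scope of the quantifier in the hypothesis.
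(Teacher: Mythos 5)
Your proposal is correct and follows essentially the same route as the paper: the authors observe that $x\in[L,L]$ if and only if some presentation $D$ admits a consistent labeling of $\Gamma(D)$ (your $D^\ast$ with labels $(a_i,b_i)$ is exactly the witness for the forward direction), and then invoke Theorem \ref{consistent labeling doesn't exist} — which holds over any local ring, in particular any field — to rule out a bad cycle with unfavorable proximity in that presentation. Your explicit reduction of $[a,b]$ modulo $Z(L)$ and the finiteness of supports are handled the same way in \S\ref{section on bilinear maps and Lie algebra}, so there is nothing to add.
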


For Lie algebras with finite-dimensional quotient Lie algebra $L/Z(L)$, we associate a (non-weighted) graph $\Gamma(\mathcal B_L)$. If $F \ne \mathbb F_2$, then Corollary \ref{Without bad cycle for whole Lie algebra} guarantees $[L,L] =L'$, provided $\Gamma(\mathcal B_L)$ does not contain bad cycles. With some essential assumptions, Theorem \ref{iff condition for Lie algebra} establishes the converse, and thus we obtain a characterization of Lie algebras with $[L,L] =L'$. Similar to our construction for $p$-groups of nilpotency class $2$, one can construct infinitely many Lie algebras with $[L,L]  \ne L'$.

The organization of the paper is as follows. In \S\ref{results on graphs}, we focus on some graphs, their step-by-step constructions, and properties that are relevant to our context. In $\S \ref{Labeling of Graphs}$, we introduce a system of balance equations over a commutative ring $R$. This system consists of equations $x_iy_j - x_jy_i = d_{i,j}$, where $d_{i,j} \in R$ and $1 \leq i < j \leq n$. We associate a weighted graph to a system of balance equations and show that its solution corresponds to a consistent labeling of vertices of this graph. We use this approach to obtain solutions of the system when $R$ is a local ring whose residue field contains at least three elements. As an application, in \S\ref{section for results on p-groups} and \S\ref{section on bilinear maps and Lie algebra}, we obtain the main results of this paper.

Although the goal of this article is to understand commutator word on groups, and the Lie bracket on Lie algebras, the machinery developed in the process is interesting on its own. We have introduced the idea of borderless graphs and nets in $\S \ref{results on graphs}$ and have proved several interesting results. This includes providing an iterative construction of these graphs and proving some structural results. For graphs that are free from bad cycles, we have also introduced the notion of an anchor of a graph. This enables us to construct a sign function on the vertex set of a net that does not contain bad cycles. These results are used together in $\S \ref{Labeling of Graphs}$ to obtain a consistent labeling on graphs that do not contain bad cycles.

A word on convention and notation in this article -- we assume that all graphs are finite, simple and connected. Unless specified otherwise, the vertex set of a graph $\Gamma(V,E)$ is $V:= \{v_1, v_2, \cdots, v_n\}$, and the edge set is $E := \{e_{i,j}\}$, where $e_{i,j}$ denotes the edge between the vertices $v_i$ and $v_j$. We denote the degree of $v_i$ in $\Gamma$ by $\deg_{\Gamma}(v_i)$. A \emph{path} in $\Gamma$ is a sequence
$v_{i_1} \to v_{i_2} \to \cdots \to v_{i_{s-1}} \to v_{i_s}$, where each $v_{i_k} \in V$ and
$e_{i_k, i_{k+1}} \in E$. A path is \emph{simple} if the vertices occurring in it are all distinct.
A path $v_{i_1} \to v_{i_2} \to \cdots \to v_{i_{r}} \to v_{i_{r+1}}$ is called an \emph{$r$-cycle} if $i_1 = i_{r+1}$. We use the notation $C_r$ to denote an $r$-cycle. A cycle $v_{i_1} \to v_{i_2} \to \cdots \to v_{i_{r}} \to v_{i_{r+1}}$ is called \emph{simple} if $v_{i_1} \to v_{i_2} \to \cdots \to v_{i_{r}}$ is a simple path.


\section{Some graphs and their properties}\label{results on graphs}
Let $\Gamma_i = (V_i, E_i)$; $i = 1,2$, be two graphs. For a given $(v, w) \in {V}_1 \times {V}_2$, we define the \emph{wedge sum} 
$\Gamma_1 \bigwedge_{v=w} \Gamma_2 = \Gamma(V,E)$ as follows.

\begin{enumerate}
\item[$(i)$.] $V := (V_1 \bigsqcup V_2) \setminus \{w\}$,
where $V_1 \bigsqcup V_2$ is the disjoint union of $V_1$ and $V_2$.
\item[$(ii)$.] $E_1 \subseteq E$.
\item[$(iii)$.] All edges in $E_2$ that are not incident on $w$ belong to $E$. If $u \in V_2$ is adjacent to $w$ in $\Gamma_2$, then there is an edge in $E$ between $u$ and $v$.
\end{enumerate}

The graph $\Gamma_1 \bigwedge_{v=w} \Gamma_2$ is called the \textit{wedge sum} of $\Gamma_1$ and $\Gamma_2$ along the vertices $v$ and $w$. Equivalently, we say that $\Gamma$ is obtained by \emph{gluing} $\Gamma_1$ and $\Gamma_2$ along $v$ and $w$.

\subsection{Borderless Graphs}
A finite connected graph $\Gamma$ is said to be a \textit{borderless} graph if any pair of distinct cycles in $\Gamma$ have at most one common vertex. A tree and a graph with exactly one cycle are obvious examples of such graphs. It is evident that a subgraph of a borderless graph is borderless. We establish some properties of borderless graphs in this subsection. These properties will be used in \S \ref{Labeling of Graphs} to provide a consistent labeling for these graphs.

\begin{lemma}\label{two vertex lemma for borderless}
Let $\Gamma$ be a borderless graph and $C_m$ be a simple cycle in $\Gamma$. Let $u$ and $v$ be any two vertices of $C_m$. Then any simple path connecting $u$ and $v$ in $\Gamma$ lies entirely in $C_m$.
\end{lemma}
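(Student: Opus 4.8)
The plan is to argue by contradiction and exploit the borderless hypothesis by manufacturing a \emph{second} cycle that meets $C_m$ in two distinct vertices. Suppose some simple path $P \colon u = w_0 \to w_1 \to \cdots \to w_k = v$ does \emph{not} lie entirely in $C_m$. Since both endpoints lie on $C_m$, this forces $P$ to contain an edge that is not an edge of $C_m$; let $\{w_i, w_{i+1}\}$ be the first such edge along $P$. Because the initial segment $w_0 \to \cdots \to w_i$ uses only edges of $C_m$ and starts at $u \in C_m$, the vertex $a := w_i$ lies on $C_m$. Let $j$ be the least index exceeding $i$ with $w_j \in C_m$ (this exists since $w_k = v \in C_m$), and set $b := w_j$. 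The subpath $Q \colon a \to w_{i+1} \to \cdots \to b$ then has both endpoints on $C_m$, has all interior vertices off $C_m$, and contains the edge $\{w_i, w_{i+1}\} \notin E(C_m)$. Since $P$ is simple and $i < j$, we have $a \neq b$.

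Next I would close $Q$ into a cycle using one of the two arcs of $C_m$ joining $a$ and $b$. Writing $A$ for such an arc, the concatenation $C' := Q \cup A$ is a cycle: $Q$ and $A$ meet only in the endpoints $a, b$, because the interior of $Q$ avoids $C_m$ entirely. This $C'$ is distinct from $C_m$, since it contains the edge $\{w_i, w_{i+1}\}$, which is not an edge of $C_m$. By construction $C'$ and $C_m$ share the two distinct vertices $a$ and $b$, contradicting the defining property of a borderless graph that distinct cycles meet in at most one vertex. Hence no such $P$ exists, and every simple path connecting $u$ and $v$ lies in $C_m$.

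The one point requiring care --- and the place where a naive argument could break --- is the degenerate case where $Q$ is a single chord edge $\{a, b\}$ (a length-one excursion), for then choosing an arc $A$ of $C_m$ of length one would merely reproduce a second edge between $a$ and $b$ rather than a genuine cycle. I would rule this out by invoking that $\Gamma$ is simple: the edge $\{a, b\} = \{w_i, w_{i+1}\}$ does not lie in $E(C_m)$, so $a$ and $b$ are non-adjacent on $C_m$, whence both arcs of $C_m$ between them have length at least two and $C'$ is a bona fide simple cycle on at least three vertices. When $Q$ has interior vertices the arc $A$ may be taken arbitrarily, since the off-cycle interior of $Q$ already guarantees both $C' \neq C_m$ and the simplicity of $C'$. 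I expect this case analysis, rather than any deep structural fact, to be the main (and essentially the only) obstacle; the remainder is bookkeeping built directly on the definition of a borderless graph.
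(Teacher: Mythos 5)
Your proof is correct and follows essentially the same route as the paper's: assume a simple path leaves $C_m$, extract an excursion whose endpoints lie on $C_m$ and whose interior does not, and close it with an arc of $C_m$ to produce a second cycle meeting $C_m$ in two distinct vertices, contradicting borderlessness. Your explicit treatment of the chord case (where the excursion is a single edge joining two vertices of $C_m$) is a welcome refinement, since the paper's argument selects a maximal run of vertices lying \emph{outside} $C_m$ and thus tacitly assumes such vertices exist.
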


\begin{proof}
On the contrary, let $P_1: u = u_0\to u_1 \to \cdots \to u_{r-1} \to u_r = v$ be a simple path in $\Gamma$ joining $u$ and $v$ that does not lie entirely in $C_m$. Let $j \geq i$ be such that the vertices $u_i$, $u_{i+1}, \cdots, u_j$ lie outside the cycle $C_m$ but $u_{i-1}$ and $u_{j+1}$ are in $C_m$. Since $u_{i-1}$ and $u_{j+1}$ lie in $C_m$, there exists a path $P_2$ between $u_{i-1}$ and $u_{j+1}$ that lies entirely in $C_m$. Then the path $P_1': u_{i-1}\to u_i \to \cdots \to u_{j} \to u_{j+1}$ followed by $P_2$ is a cycle, which is different from $C_m$ but has more than one common vertex. This contradicts the hypothesis that $\Gamma$ is borderless.
\end{proof}

The following lemma gives crucial structural information about borderless graphs.

\begin{lemma}\label{pendant or borderless cycle}
Let $\Gamma$ be a borderless graph. Then one of the following holds.
\begin{enumerate}
\item $\Gamma$ is a simple cycle.
\item $\Gamma$ has a pendant vertex.
\item $\Gamma$ properly contains a simple cycle which has a unique vertex of degree greater than $2$ in $\Gamma$.
\end{enumerate}
\end{lemma}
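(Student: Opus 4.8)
The plan is to argue by a case analysis that exhausts the three alternatives. If $\Gamma$ is a simple cycle, conclusion (1) holds; if $\Gamma$ has a vertex of degree $1$, conclusion (2) holds. So I would assume from now on that $\Gamma$ is \emph{not} a simple cycle and that $\deg_\Gamma(v)\geq 2$ for every vertex $v$, and aim to produce conclusion (3): a simple cycle $C$ with $C\neq\Gamma$ having exactly one vertex of degree $>2$. First I would record two preliminary facts. Since $\Gamma$ is finite with minimum degree at least $2$, it contains a simple cycle. Moreover, borderlessness forbids chords: a chord of a simple cycle would split it into two distinct cycles sharing two vertices. Hence a simple cycle containing every vertex of $\Gamma$ must equal $\Gamma$; as $\Gamma$ is not a simple cycle, every simple cycle is properly contained in $\Gamma$, and, using connectedness, every such cycle carries at least one vertex of degree $>2$ (a vertex from which $\Gamma$ branches away contributes its two cycle-edges plus at least one further edge).

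The heart of the matter is to find a cycle with \emph{exactly} one such branch vertex, i.e.\ an ``outermost'' cycle. Conceptually, borderlessness says distinct cycles meet in at most one vertex, so no edge lies in two cycles; thus $\Gamma$ is a cactus whose blocks are bridges or simple cycles, glued along cut vertices into a tree. Since $\Gamma$ is not a single cycle, this block tree has at least two leaf blocks; a leaf block that is a bridge would terminate in a degree-$1$ vertex, which we excluded, so some leaf block is a simple cycle $C$. Its unique cut vertex $w$ is the only vertex of $C$ incident to the rest of $\Gamma$, whence every vertex of $C$ other than $w$ has degree exactly $2$, while $w$, lying in at least two blocks, has degree at least $3$; thus $C\neq\Gamma$ has $w$ as its unique vertex of degree $>2$, giving (3). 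To keep the argument self-contained and avoid block theory, I would instead select $C$ extremally — for instance as a simple cycle at maximum distance from a fixed reference cycle — and invoke Lemma \ref{two vertex lemma for borderless}: if some vertex of such a $C$ other than its attachment point had a neighbour off $C$, then following a simple path out of $C$ and back would join two vertices of $C$ by a simple path leaving $C$, contradicting that lemma. This forces all but one vertex of $C$ to have degree $2$.

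The step I expect to be the main obstacle is precisely establishing \emph{uniqueness} of the degree-$>2$ vertex on the chosen cycle, together with the existence of an outermost cycle. Uniqueness is where borderlessness is indispensable: Lemma \ref{two vertex lemma for borderless} is exactly the tool that prevents a second branch from looping back and creating a forbidden overlap of cycles. Existence requires a finiteness or extremal input — either the leaf of the cactus block tree, or an extremal choice of cycle whose outward excursions cannot return. I would finally verify the two bookkeeping points: that the outermost cycle is genuinely proper in $\Gamma$ (already guaranteed, since $\Gamma$ is not a simple cycle), and that its distinguished vertex $w$ truly has degree $>2$, since it contributes $2$ to its degree along $C$ and at least one further edge leading into the remainder of $\Gamma$.
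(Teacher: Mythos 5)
Your argument is correct, but it takes a genuinely different route from the paper's. The paper proceeds by induction on the number of simple cycles: it picks a cycle with two branch vertices, uses Lemma \ref{two vertex lemma for borderless} to split $\Gamma$ as a wedge sum $\Gamma_1 \bigwedge \Gamma_2$ at one of them, and iterates this peeling on the smaller piece until finiteness forces either a pendant vertex or a cycle with a unique branch vertex. You instead observe that a borderless graph is exactly a cactus, pass to the block--cut tree, and take a leaf block: a leaf bridge yields a pendant vertex, and a leaf cycle has its unique cut vertex as its unique vertex of degree $>2$. Your decomposition buys a cleaner and shorter existence argument for the ``outermost'' cycle (the tree has a leaf, full stop), at the cost of importing two standard facts that the paper avoids: that the blocks of a cactus are bridges or simple cycles (which needs the theta-subgraph argument), and the block-tree structure itself. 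The paper's hands-on iteration is essentially building your block tree one wedge at a time, and its decomposition machinery is reused later (e.g.\ in Lemma \ref{inductive-gluing-for-borderless}), which is presumably why the authors keep it self-contained. One caveat on your fallback ``extremal cycle'' variant: as stated it is the weaker of your two arguments, since a simple path leaving $C$ at a second branch vertex need not return to $C$ at all --- it may lead to a cycle still farther out --- so maximality of $C$ has to be formulated with respect to the right quantity (e.g.\ leaf position in the block tree, or the paper's iterated wedge decomposition) rather than naively; your primary block-tree argument does not suffer from this.
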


\begin{proof}
  Let $r$ be the number of simple cycles in $\Gamma$. We proceed by induction on $r$. If $r = 0$, then $\Gamma$ is a tree and thus it has a pendant vertex. So, we assume that $r \geq 1$, and that the lemma holds for all borderless graphs containing less than $r$ cycles. If $\Gamma$ is a simple cycle, then the lemma follows. Thus, we assume that $\Gamma$ properly contains a simple cycle.

\noindent{\bfseries Step 1}. Let $C_{r_1}$ be such a cycle. If it has a unique vertex of degree greater than $2$ in $\Gamma$, then the lemma follows. So, let $u_1$ and $u_2$ be two vertices of $C_{r_1}$ such that $\deg(u_1) > 2$ and $\deg(u_2) > 2$. By Lemma \ref{two vertex lemma for borderless}, we conclude that any path between $u_1$ and some other vertex of $C_{r_1}$ lies entirely in $C_{r_1}$. Thus, there exist two subgraphs $\Gamma_1$ and $\Gamma_2$ of $\Gamma$ such that $\Gamma = \Gamma_1 \bigwedge_{u_1 = v} \Gamma_2$, $C_{r_1} \subsetneq \Gamma_1$ and $\deg(u_1)$ in $\Gamma_1$ is $2$. 

\noindent{\bfseries Step 2}. Clearly, the number of cycles in $\Gamma_2$ is less than $r$ and we apply induction on $\Gamma_2$. If $\Gamma_2$ is a tree, then it has at least two pendant vertices. So, $\Gamma$ has at least one pendant vertex. If $\Gamma_2$ is a simple cycle, then it is a simple cycle in $\Gamma$ which has a unique vertex $u_1$ of degree greater than $2$ in $\Gamma$, and thus the lemma follows. So, we assume that $\Gamma_2$ properly contains a simple cycle. Let it be $C_{r_2}$. If $C_{r_2}$ has a unique vertex with a degree greater than $2$ in $\Gamma$, then the lemma follows. So, let $u_3$ and $u_4$ be two distinct vertices of $C_{r_2}$ which have degree greater than $2$ in $\Gamma$. By Lemma \ref{two vertex lemma for borderless}, there exist two subgraphs $\Gamma_3$ and $\Gamma_4$ of $\Gamma$ such that $\Gamma = \Gamma_3 \bigwedge_{u' = v'} \Gamma_4$, $C_{r_2} \subsetneq \Gamma_3$, $\Gamma_1 \subsetneq \Gamma_3$, $u'$ is either $u_3$ or $u_4$, and $\deg(u')$ in $\Gamma_3$ is $2$. 

\noindent{\bfseries Step 3}. Repeat Step $2$ after replacing $\Gamma_2$ by $\Gamma_4$. Then we either get a pendant vertex in $\Gamma$, or a simple cycle with a unique vertex of degree greater than $2$ in $\Gamma$, or there exists a simple cycle $C_{r_3}$ in $\Gamma_4$ which has at least two vertices $u_5$ and $u_6$ with degree greater than $2$ in $\Gamma$. Hence, there exist two subgraphs $\Gamma_5$ and $\Gamma_6$ of $\Gamma$ such that $\Gamma = \Gamma_5 \bigwedge_{u'' = v''} \Gamma_6$, $C_{r_3} \subsetneq \Gamma_5$, $\Gamma_3 \subsetneq \Gamma_5$, $u''$ is either $u_5$ or $u_6$, and $\deg(u'')$ in $\Gamma_5$ is $2$.

We again repeat Step $2$ after replacing $\Gamma_2$ by $\Gamma_6$. Note that since $\Gamma$ is a finite graph, we cannot relentlessly repeat this step. Thus, the lemma follows after a finite repetition of Step $2$.
\end{proof}


The following lemma provides a mechanism for iteratively constructing a borderless graph.

\begin{lemma}\label{inductive-gluing-for-borderless}
Let $\Gamma$ be a borderless graph. Then $\Gamma$ can be constructed iteratively, by gluing either a tree or a cycle at each step.
\end{lemma}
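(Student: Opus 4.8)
The plan is to induct on the number of simple cycles $r$ in $\Gamma$, using Lemma \ref{pendant or borderless cycle} as the engine that peels off one tree or one cycle at each stage. The statement to prove is that a borderless $\Gamma$ admits a construction as a sequence of wedge sums, where at each step we glue either a tree or a simple cycle onto the piece built so far. I would set up the induction on the total number of vertices (or edges), so that each peeling strictly decreases the complexity and the process terminates.

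For the base cases, if $\Gamma$ is a single vertex or a tree, then $\Gamma$ is itself a tree and the construction is trivial (one gluing step). If $\Gamma$ is a simple cycle, it is built in one step as a cycle. For the inductive step, I would invoke Lemma \ref{pendant or borderless cycle}, which hands us exactly three cases. In case (2), $\Gamma$ has a pendant vertex $v$; removing the pendant edge and vertex yields a smaller borderless graph $\Gamma'$, and $\Gamma$ is recovered by gluing a single edge (a tree) onto $\Gamma'$ at the attachment vertex. More generally one can strip off a maximal pendant tree hanging from a single vertex $w$ of $\Gamma'$, so that $\Gamma = \Gamma' \bigwedge_{w = w_0} T$ for a tree $T$. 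In case (3), there is a simple cycle $C$ properly contained in $\Gamma$ that has a \emph{unique} vertex $w$ of degree greater than $2$ in $\Gamma$. This uniqueness is the crucial structural feature: it means $C$ is attached to the rest of $\Gamma$ at the single vertex $w$, so deleting all vertices of $C$ except $w$ (together with the edges of $C$) leaves a borderless graph $\Gamma'$ with strictly fewer cycles, and $\Gamma = \Gamma' \bigwedge_{w = w_0} C$. Since $\Gamma'$ is borderless with fewer vertices, the induction hypothesis provides an iterative gluing construction of $\Gamma'$, and appending the final gluing of $T$ or $C$ completes the construction of $\Gamma$.

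The main point requiring care, and the step I expect to be the chief obstacle, is verifying that the piece $\Gamma'$ remaining after the peel is genuinely borderless and that the removed piece meets $\Gamma'$ in exactly one vertex, so that the wedge sum $\Gamma = \Gamma' \bigwedge_{w=w_0} (\text{tree or cycle})$ reproduces $\Gamma$ faithfully. For the cycle case this is where the uniqueness of the high-degree vertex $w$ does the work: any path leaving $C$ must exit through $w$, since by Lemma \ref{two vertex lemma for borderless} any simple path between two vertices of $C$ stays inside $C$, and no other vertex of $C$ has neighbors outside $C$. Hence $C \setminus \{w\}$ shares no vertex with $\Gamma'$, and the gluing is clean. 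The borderless property of $\Gamma'$ is inherited because $\Gamma'$ is a subgraph of the borderless graph $\Gamma$, as noted in the subsection's opening remarks. Finally, I would observe that each peeling strictly reduces the number of vertices, so the recursion bottoms out at a tree or a single cycle, yielding the desired finite iterative construction and completing the induction.
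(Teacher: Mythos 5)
Your proposal is correct and follows essentially the same route as the paper: induct on the size of the graph, and use Lemma \ref{pendant or borderless cycle} to peel off either a pendant edge (or tree) or a simple cycle attached at its unique vertex of degree greater than $2$, the latter cleanness being guaranteed by Lemma \ref{two vertex lemma for borderless}. Your added verification that the removed cycle meets the remainder only at $w$ (since every other vertex of that cycle has degree exactly $2$ in $\Gamma$) makes explicit a point the paper leaves implicit, but the argument is the same.
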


\begin{proof}
The proof is by induction on $m$, the number of edges in $\Gamma$. If $m \leq 3$, then $\Gamma$ is either a tree or a cycle and the lemma holds trivially. Now, let $m>3$. We may assume that $\Gamma$ properly contains a cycle, otherwise $\Gamma$ is either a tree or a cycle. We consider the following two cases.

{\bfseries Case 1}. \quad {\it $\Gamma$ contains a pendant vertex}. \\
Let $v$ be a pendant vertex and $u$ be the vertex adjacent to $v$ in $\Gamma$. Let $\Gamma_1$ be the graph obtained after deleting the edge joining $u$ and $v$ from $\Gamma$. By induction, the lemma holds for $\Gamma_1$. Since we can obtain $\Gamma$ from $\Gamma_1$ by gluing an edge, it holds for $\Gamma$ as well. 

{\bfseries Case 2}.  \quad {\it $\Gamma$ does not contain a pendant vertex}. \\
Since $\Gamma$ is borderless, and does not contain a pendant vertex, by Lemma \ref{pendant or borderless cycle}, it must contain a cycle $C_r$ which has a unique vertex of degree greater than $2$ in $\Gamma$. Let $\Gamma_1$ be the graph obtained after deleting the cycle $C_r$ from $\Gamma$. Then $\Gamma_1$ is borderless. By the induction, the lemma holds for $\Gamma_1$ and thus, it holds for $\Gamma$ as well.
\end{proof}

For a borderless graph $\Gamma$, we define its  {\it height} $s(\Gamma)$ to be the minimal number of steps required to construct $\Gamma$ by gluing either a cycle or a tree at each step. In the base case when $\Gamma$ is a tree or a cycle we set $s(\Gamma) = 1$. In Figures \ref{Borderless graph 1} and \ref{Borderless graph 2}, we illustrate two borderless graphs of height $2$.

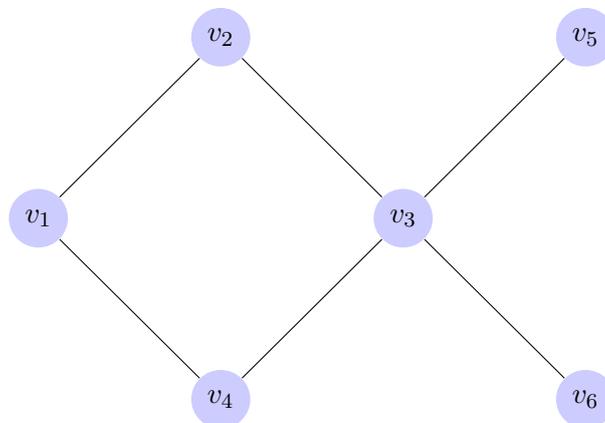
\begin{figure}[h]
\begin{tikzpicture}
  [scale=.6,auto=left,every node/.style={circle,fill=blue!20}]
  \node (n1) at (1,8)  {$v_1$};
  \node (n2) at (5,12)  {$v_2$};
  \node (n3) at (9,8)  {$v_3$};
  \node (n4) at (5,4)  {$v_4$}; 
  \node (n5) at (13,12)  {$v_5$};
  \node (n6) at (13,4)  {$v_6$};

  \foreach \from/\to in {n1/n2,n2/n3,n3/n4,n4/n1,n3/n5,n3/n6}
      \draw (\from) -- (\to);
\end{tikzpicture}
\caption{A borderless graph $\Gamma_1$ with $s(\Gamma_1) = 2$.
\label{Borderless graph 1}
}
\end{figure}

\begin{figure}
\begin{tikzpicture}
  [scale=.6,auto=left,every node/.style={circle,fill=blue!20}]
  \node (n1) at (1,4)   {$v_1$};
  \node (n2) at (1,8)   {$v_2$};
  \node (n3) at (5,12)  {$v_3$};
  \node (n4) at (9,8)   {$v_4$};
  \node (n5) at (9,4)   {$v_5$};
  \node (n7) at (17,8)  {$v_7$};
  \node (n6) at (13,12) {$v_6$};
  \node (n8) at (13,4)  {$v_8$};
  
  \foreach \from/\to in {n1/n2,n2/n3,n3/n4,n1/n5,n4/n5,n4/n6,n4/n8,n6/n7,n7/n8}
      \draw (\from) -- (\to);
\end{tikzpicture}
\caption{A borderless graph $\Gamma_2$ with $s(\Gamma_2) = 2$.
\label{Borderless graph 2}
}
\end{figure}
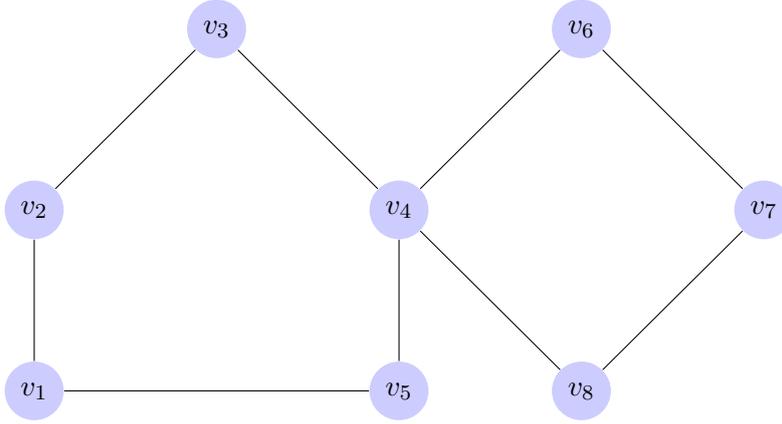

\subsection{Nets}
A graph $\Gamma$ with the vertex set $V = \{ v_1, v_2, \dots , v_n\}$ and the edge set $E = \{ e_1, e_2, \dots , e_{n-1} \}$ is called a \textit{segment}, if the vertices $v_i$ and $v_{i+1}$ are joined by the edge $e_i$ for each $i \in \{ 1, 2, \dots , n-1\}$. We define the vertices of degree one of a segment as \textit{end points} of the segment. We call a subgraph $\Gamma_1$ of a graph $\Gamma$, a \textit{maximal segment} of $\Gamma$ if it has the following properties.
\begin{enumerate}
    \item $\Gamma_1$ is a segment.
    \item The end points of $\Gamma_1$ have degree different from $2$ in $\Gamma$.
    \item All vertices of $\Gamma_1$, except the end points, have degree $2$ in $\Gamma$.
\end{enumerate}

A connected finite graph $\Gamma$ is called a \textit{net} if it can be constructed by taking a simple cycle in the first step and then iteratively gluing the endpoints of a segment to two different points at each step to follow. For a net $\Gamma$, we define $\eta(\Gamma)$ as the minimum number of steps needed to iteratively construct $\Gamma$. If $\Gamma$ is a simple cycle, then clearly $\eta(\Gamma) = 1$. The graph $\Gamma_1$ in Figure \ref{eta(Gamma) = 2} has $\eta(\Gamma_1) = 2$. 

\begin{center}
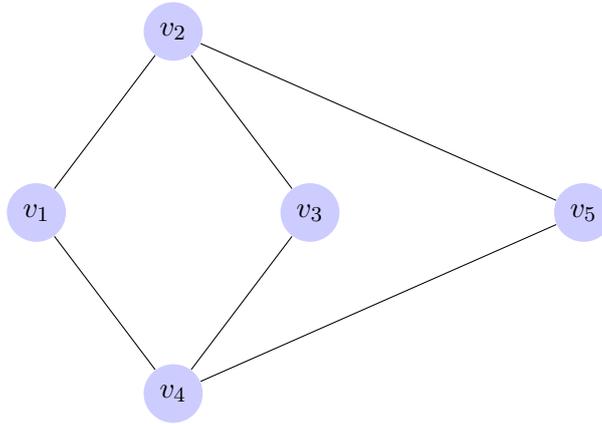
\begin{figure}[h]
\begin{tikzpicture}
  [scale=.6,auto=left,every node/.style={circle,fill=blue!20}]
  \node (n1) at (1,8)  {$v_1$};
  \node (n2) at (4,12)  {$v_2$};
  \node (n3) at (7,8)  {$v_3$};
  \node (n4) at (4,4)  {$v_4$}; 
  \node (n5) at (13,8) {$v_5$};

  \foreach \from/\to in {n1/n2,n2/n3,n3/n4,n4/n1,n2/n5,n5/n4}
      \draw (\from) -- (\to);
\end{tikzpicture}
\caption{Example of a net $\Gamma_1$ with $\eta(\Gamma_1) =2$.}\label{eta(Gamma) = 2}
\end{figure}
\end{center}
The following lemma shows that $\eta(\Gamma)$ is independent of the iterative construction of $\Gamma$.
\begin{lemma}\label{eta(Gamma) equation}
Let $\Gamma$ be a net with vertex set $V$ and edge set $E$. Then $\eta(\Gamma) = |E| -|V| + 1$.
\end{lemma}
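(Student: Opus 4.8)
The plan is to prove something slightly stronger than the stated equality: that \emph{every} valid iterative construction of a net $\Gamma$, not merely a minimal one, uses exactly $|E| - |V| + 1$ steps. This simultaneously establishes the formula for $\eta(\Gamma)$ and re-proves the assertion, made just before the lemma, that the step count is independent of the chosen construction. The quantity I would track through a construction is the cycle rank $\beta(\Gamma') = |E(\Gamma')| - |V(\Gamma')| + 1$ of the intermediate graph $\Gamma'$, and the heart of the argument is to show that $\beta$ increases by exactly $1$ at each step.

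For the base case, the construction begins with a simple cycle on some $c$ vertices; such a cycle has $c$ edges and $c$ vertices, so $\beta = c - c + 1 = 1$, matching $\eta = 1$. For the inductive step, suppose we pass from an intermediate graph $\Gamma'$ to $\Gamma''$ by gluing the two endpoints of a segment to two distinct vertices of $\Gamma'$. A segment with $p$ edges has $p + 1$ vertices, of which two are endpoints and the remaining $p - 1$ are interior vertices of degree $2$. Gluing identifies the two endpoints with vertices already present in $\Gamma'$, so this move adjoins only the $p - 1$ interior vertices as new vertices, together with all $p$ edges of the segment as new edges. Hence $|V(\Gamma'')| = |V(\Gamma')| + (p - 1)$ and $|E(\Gamma'')| = |E(\Gamma')| + p$, so that $\beta(\Gamma'') = \beta(\Gamma') + \big(p - (p-1)\big) = \beta(\Gamma') + 1$, independent of the segment length $p$.

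Combining these, an induction on the number of steps $k$ of a given construction yields $\beta(\Gamma) = 1 + (k - 1) = k$. Since $\Gamma$ is the final graph, this forces $k = |E| - |V| + 1$ for that construction. As the right-hand side depends only on $\Gamma$, every construction uses the same number $|E| - |V| + 1$ of steps; in particular the minimal number $\eta(\Gamma)$ equals $|E| - |V| + 1$.

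The only point requiring care is the edge-and-vertex bookkeeping at a gluing step, namely keeping the two segment endpoints (which are identified with pre-existing vertices and therefore contribute no new vertices) cleanly separated from the $p - 1$ interior vertices (which are genuinely new), while noting that all $p$ segment edges are new. The whole content of the lemma is that this increment equals $+1$ regardless of $p$; this invariance of the cycle rank under the permitted moves is exactly what makes the step count—and hence the minimum $\eta(\Gamma)$—independent of the construction.
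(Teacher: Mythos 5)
Your proof is correct and follows essentially the same route as the paper's: both arguments reduce to the observation that gluing a segment with $p$ edges adds $p$ edges and $p-1$ vertices, so the quantity $|E|-|V|+1$ increases by exactly $1$ per step, starting from $1$ for the initial cycle. The only difference is presentational — you induct on the length of an arbitrary construction rather than on $\eta(\Gamma)$, which makes explicit the construction-independence that the paper states just before the lemma.
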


\begin{proof}
We prove it by induction on $\eta(\Gamma)$. If $\eta(\Gamma) = 1$, then $\Gamma$ is a simple cycle and thus $|E| -|V| + 1 = 1$. If $\eta(\Gamma) =: r >1$, then by the induction hypothesis, we assume that the lemma holds for all $\Gamma$ with $\eta(\Gamma) < r$. Let $\Gamma_1$ be the segment glued to the net $\Gamma_2$ in the last step of the iterative construction of $\Gamma$. Let $\Gamma_1$ has vertex set $V_{\Gamma_1}$ and edge set $E_{\Gamma_1}$, and $\Gamma_2$ has vertex set $V_{\Gamma_2}$ and edge set $E_{\Gamma_2}$. Since $\eta(\Gamma_2) = r-1$, by induction $\eta(\Gamma_2) = |E_{\Gamma_2}|  - |V_{\Gamma_2}| + 1 = r-1$. Further, as the end points of $\Gamma_1$ are identified with two vertices of $\Gamma_2$ to obtain $\Gamma$, thus $|V| = |V_{\Gamma_2}| + |V_{\Gamma_1}|-2$ and $|E| = |E_{\Gamma_2}| + |E_{\Gamma_1}|$. Furthermore, since $\Gamma_1$ is a segment, $|V_{\Gamma_1}| = |E_{\Gamma_1}| + 1$. Thus,

\begin{align*}
|E| - |V| + 1 &= |E_{\Gamma_2}| + |E_{\Gamma_1}| -|V_{\Gamma_2}| - |V_{\Gamma_1}| + 2  + 1 \\
&= \left( |E_{\Gamma_2}|  - |V_{\Gamma_2}| + 1 \right) +1 \\
&=\eta(\Gamma_2) + 1 \\
&= \eta(\Gamma).
\end{align*}
\end{proof}

\begin{lemma}\label{deleting maximal segments from a net}
 Let $\Gamma$ be a net with $\eta(\Gamma) \geq 2$. Let $\Gamma'$ be a subgraph of $\Gamma$, obtained by deleting the edges of a maximal segment in $\Gamma$. Then $\Gamma'$ is a net with $\eta(\Gamma') = \eta(\Gamma) - 1$.
\end{lemma}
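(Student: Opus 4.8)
The plan is to obtain the value of $\eta(\Gamma')$ for free from Lemma \ref{eta(Gamma) equation}, once $\Gamma'$ is shown to be a net, so the real content lies in proving that deleting a maximal segment returns a net. Write the maximal segment as $a = u_0 \to u_1 \to \cdots \to u_t \to u_{t+1} = b$, where its end points $a,b$ have degree $\neq 2$ in $\Gamma$ and its internal vertices $u_1, \dots, u_t$ each have degree $2$, with both of their incident edges lying in the segment; since the segment is a path subgraph, $a \neq b$. Deleting its $t+1$ edges isolates each $u_i$, so $\Gamma'$ is the graph on $V \setminus \{u_1,\dots,u_t\}$ obtained by removing these edges, whence $|V(\Gamma')| = |V| - t$ and $|E(\Gamma')| = |E| - (t+1)$. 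Granting that $\Gamma'$ is a net, Lemma \ref{eta(Gamma) equation} then gives
\[
\eta(\Gamma') = |E(\Gamma')| - |V(\Gamma')| + 1 = (|E| - t - 1) - (|V| - t) + 1 = |E| - |V| = \eta(\Gamma) - 1 .
\]

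First I would record that in a net every edge lies on a cycle: this follows by induction on the number of construction steps, because gluing a segment with distinct end points $p,q$ produces, together with any path joining $p$ and $q$ in the graph constructed so far, a cycle through the new edges, while all older edges retain their cycles. Applying this to an edge of our maximal segment and using that each $u_i$ has degree $2$, the cycle through that edge is forced to run through the entire segment together with a path $Q$ from $a$ to $b$ whose interior avoids $u_1, \dots, u_t$. Thus $a$ and $b$ stay joined by $Q$ in $\Gamma'$, and any path of $\Gamma$ that entered the segment can be rerouted along $Q$; hence $\Gamma'$ is connected.

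To produce an iterative net-construction of $\Gamma'$ I would induct on $\eta(\Gamma)$. In the base case $\eta(\Gamma) = 2$ the graph $\Gamma$ is a simple cycle with a single segment glued, i.e. three internally disjoint paths between two branch vertices; its maximal segments are precisely those three paths, and deleting any one of them leaves a single cycle, a net with $\eta = 1$. For the inductive step, fix a construction of $\Gamma$ and let $T$, with end points $c,d$, be the segment glued last; since $c,d$ are vertices of the net $N := \Gamma \setminus T$ they have degree $\geq 2$ in $N$ and hence degree $\geq 3$ in $\Gamma$, so $T$ is itself a maximal segment and $N$ is a net with $\eta(N) = \eta(\Gamma) - 1$. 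If the maximal segment to be deleted is $T$, then $\Gamma' = N$ and we are done. Otherwise it is edge-disjoint from $T$ (distinct maximal segments partition $E$), and when its end points keep degree $\geq 3$ after the removal of $T$ it remains a maximal segment of $N$; deleting it from $N$ by induction and then regluing $T$ recovers $\Gamma'$ as a net with $\eta(\Gamma) - 1$.

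The hard part, which I expect to be the crux, is the remaining case where an end point of the deleted segment drops to degree $2$ upon removing $T$: there the naive induction stalls, and one must instead verify directly that the remainder stays $2$-connected rather than acquiring a cut vertex. My plan for this is to arrange the construction of $\Gamma$ so that the segment to be deleted is the one glued at the final step---successively peeling off glued segments disjoint from its interior until only it remains to be glued---so that $\Gamma' = N$ by construction; with that choice the displayed computation immediately yields $\eta(\Gamma') = \eta(\Gamma) - 1$.
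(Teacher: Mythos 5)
Your computation of $\eta(\Gamma')$ via Lemma~\ref{eta(Gamma) equation}, the connectivity argument, the base case $\eta(\Gamma)=2$, and the inductive case in which the endpoints of the deleted segment keep degree at least $3$ after removing the last-glued segment $T$ all run parallel to the paper's proof, which inducts on $\eta(\Gamma)$ in exactly this way. The genuine gap is the case you yourself flag as the crux: when an endpoint of the segment $S$ to be deleted drops to degree $2$ in $N=\Gamma\setminus T$, so that $S$ is no longer a maximal segment of $N$ and the induction hypothesis cannot be applied to it. Your proposed repair --- reorder the construction of $\Gamma$ so that $S$ is the segment glued last --- is circular: a maximal segment $S$ can appear as the final glued segment of some iterative construction precisely when $\Gamma\setminus S$ is itself a net, which is the assertion being proved. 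Nothing in the proposal shows that the ``peeling'' terminates with $S$ as the only segment left to glue.

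Moreover, this case is not a removable technicality: the statement actually fails there. Take the theta graph on branch vertices $u,v$ with three internally disjoint paths $P_1,P_2,P_3$, let $w$ be an interior vertex of $P_1$, and glue one further segment $T$ from $u$ to $w$. The resulting graph $\Gamma$ is a net with $\eta(\Gamma)=3$, and the portion $S$ of $P_1$ running from $w$ to $v$ is a maximal segment of $\Gamma$, since $w$ and $v$ both have degree $3$. Deleting $S$ leaves two edge-disjoint cycles meeting only at $u$; such a wedge of two cycles cannot be obtained by gluing the endpoints of a segment to two \emph{different} points, and it violates condition $(3)$ of Lemma~\ref{net equivalent definition}, so $\Gamma\setminus S$ is not a net even though $|E|-|V|+1$ drops by $1$ as expected. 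For comparison, the paper's own proof elides exactly the same case: it notes that a maximal segment $\Gamma_3\neq\Gamma_1$ is contained in $\Gamma_2$ and then invokes the induction hypothesis on $\Gamma_2$ as if $\Gamma_3$ were a maximal segment of $\Gamma_2$, which is false precisely when $\Gamma_3$ shares an endpoint of degree $3$ with the last-glued segment. So you have correctly isolated the difficulty, but it is fatal to the lemma as stated rather than something a cleverer ordering of the gluings can repair; any fix must add a hypothesis excluding configurations like the one above.
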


\begin{proof}
We proceed by induction on $\eta(\Gamma)$. If $\eta(\Gamma) = 2$, then $\Gamma$ contains three maximal segments. Removal of any of these maximal segments from $\Gamma$ gives a simple cycle and thus the lemma follows. Now, suppose $r : = \eta(\Gamma) >2$ and the lemma holds for all nets $\Gamma'$ with $\eta(\Gamma') < r$. Let $\Gamma_1$ be a segment, and $\Gamma_2$ be a net having $\eta(\Gamma_2) = r-1$, such that after gluing $\Gamma_1$ at two distinct points of $\Gamma_2$, we obtain $\Gamma$. Let $\Gamma_3$ be a maximal segment, removed from $\Gamma$ to get a graph $\Gamma_4$. We show that $\Gamma_4$ is a net. For that, we first assume that $\Gamma_3$ is the same as $\Gamma_1$. In this case, it turns out that $\Gamma_4$ is $\Gamma_2$, which is a net. Now, if $\Gamma_3$ is a maximal segment different from $\Gamma_1$, then it is contained in $\Gamma_2$. By induction on $\Gamma_2$, the graph obtained after removing the edges of the maximal segment $\Gamma_3$ from $\Gamma_2$ is a net. Now, if we glue the segment $\Gamma_1$ to it, then we get $\Gamma_4$. Thus, $\Gamma_4$ is also a net. The fact that $\eta(\Gamma_4) = \eta(\Gamma)-1$ follows directly by Lemma \ref{eta(Gamma) equation}.
\end{proof}

The following lemma gives an equivalent definition for nets.

\begin{lemma}\label{net equivalent definition}
Let $\Gamma$ be a finite connected graph. Then $\Gamma$ is a net with $\eta(\Gamma) \geq 2$ if and only if it has the following properties.

\begin{enumerate}
    \item $\Gamma$ contains at least two simple cycles.
    \item Each edge of $\Gamma$ is a part of some simple cycle in $\Gamma$.
    \item For each simple cycle $C$ in $\Gamma$, there exists a simple cycle $C'$ in $\Gamma$, different from $C$, such that $C$ and $C'$ share a common maximal segment in $\Gamma$.
\end{enumerate}
\end{lemma}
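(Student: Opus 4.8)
The plan is to prove the biconditional by establishing both directions, leaning heavily on the structural results already developed, especially Lemmas \ref{inductive-gluing-for-borderless} and \ref{deleting maximal segments from a net}. For the forward direction, I would assume $\Gamma$ is a net with $\eta(\Gamma) \geq 2$ and verify properties (1)--(3) by induction on $\eta(\Gamma)$. The base case $\eta(\Gamma) = 2$ is checked directly: such a graph is built from a single cycle by gluing one segment, producing a ``theta-like'' configuration with exactly three maximal segments and two (in fact three) simple cycles, all of whose edges lie on a cycle and each of whose cycles shares a maximal segment with another. For the inductive step, I would use Lemma \ref{deleting maximal segments from a net} to remove a maximal segment, obtaining a net $\Gamma'$ with $\eta(\Gamma') = \eta(\Gamma)-1$; properties (1)--(3) hold for $\Gamma'$ by induction, and I would argue that re-gluing the maximal segment back preserves them, since gluing a segment adds a new cycle through the segment and keeps every old edge on its old cycle.

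For the reverse direction, I would assume $\Gamma$ satisfies (1)--(3) and show it is a net with $\eta(\Gamma) \geq 2$. The natural strategy is again induction, this time on the number of edges or on the number of simple cycles. The key idea is to locate a maximal segment $S$ that is shared by two distinct simple cycles $C$ and $C'$ (property (3) guarantees its existence for any chosen cycle) and to argue that deleting the edges of $S$ yields a graph $\Gamma'$ that still satisfies (1)--(3), unless $\Gamma'$ has dropped to a single simple cycle, which is exactly the base case $\eta = 1$ that seeds the net construction. Once $\Gamma'$ is shown to be a net by induction, gluing $S$ back at its two endpoints realizes $\Gamma$ as a net with one additional step, giving $\eta(\Gamma) = \eta(\Gamma') + 1 \geq 2$.

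I expect the main obstacle to be the reverse direction, specifically showing that deleting the shared maximal segment $S$ leaves a graph that is still connected and still satisfies conditions (1)--(3). Connectivity is the delicate point: after removing the interior of $S$, the two cycles $C$ and $C'$ still share the other maximal segment(s) that made them distinct, so the endpoints of $S$ remain joined through $C \cup C'$ minus $S$; I would need to argue carefully that no vertex gets isolated and that the remaining edges still each lie on some cycle (property (2)), since removing $S$ could in principle strip the only cycle through some nearby edge. Here property (3) is doing the real work: because every cycle shares a maximal segment with another cycle, the ``cycle structure'' is robust enough that deleting one shared segment merges $C$ and $C'$ into a single cycle rather than destroying cyclicity elsewhere. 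The verification that the endpoints of $S$ have degree different from $2$ (so that $S$ is genuinely a maximal segment whose removal is a legitimate inverse of the gluing operation) must be handled using (2) and (3) together, and confirming the degree bookkeeping is where I anticipate the proof requiring the most care.
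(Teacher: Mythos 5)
Your plan follows essentially the same route as the paper: the forward direction is an induction on $\eta(\Gamma)$ that peels off a glued segment from the iterative construction, and the reverse direction is an induction on the cycle rank $|E|-|V|$ (your ``number of edges'' count amounts to the same thing) that deletes a maximal segment shared by two cycles, as guaranteed by (3), applies the inductive hypothesis, and glues it back. The one place the paper is more explicit than your sketch is the base case $|E|-|V|=1$, where it uses (3) to rule out the configuration of two cycles meeting at a single degree-$4$ vertex and to pin down exactly two degree-$3$ vertices; the delicate points you flag (connectivity and preservation of (1)--(3) after deleting the shared segment) are treated at essentially the same level of detail in the paper's own argument.
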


\begin{proof}
We first assume that $\Gamma$ is a net with $\eta(\Gamma) \geq 2$. Clearly, it contains at least two simple cycles. 
To show that $\Gamma$ has the rest of the properties, we apply induction on $\eta(\Gamma)$. Suppose $\eta(\Gamma) = 2$.
Then these properties are evident. Now, suppose $\eta(\Gamma) = r \geq 3$. By induction hypothesis, the conclusion of the lemma holds for all nets $\Gamma'$ with $\eta(\Gamma') < r $. Let $\Gamma_1$ be a segment, and $\Gamma_2$ be a net having $\eta(\Gamma_2) = r-1$, such that after gluing $\Gamma_1$ at two distinct points (say $u$ and $w$) of $\Gamma_2$, we obtain $\Gamma$. By induction, the lemma holds for $\Gamma_2$. 

We first show that $\Gamma$ satisfies $(2)$. Since $u, w \in \Gamma_2$ and $\Gamma_2$ is connected, there exists a simple path $P_1$ between $u$ and $w$ in $\Gamma_2$. Thus, $P_1$ along with the edges of $\Gamma_1$ forms a simple cycle, $C^{(1)}$ in $\Gamma$. Hence, all the edges of $\Gamma$ are part of some simple cycle in $\Gamma$. Now, we show that $\Gamma$ satisfies $(3)$. Let $C$ be a simple cycle in $\Gamma$. If $C \subseteq \Gamma_2$, then by induction there exists a simple cycle $C^{(2)}$ in $\Gamma_2$, and thus in $\Gamma$, different from $C$, such that $C$ and $C^{(2)}$ shares a common maximal segment in $\Gamma$. If $C \not\subset \Gamma_2$, then all the edges of $\Gamma_1$ are part of $C$. Further, if $C \neq C^{(1)}$, then all the edges of $\Gamma_1$ are common in $C$ and $C^{(1)}$, and hence the lemma follows in this case as well. Finally, suppose $C^{(1)} = C$. Let $\Gamma_3$ be a maximal segment common in $C$ and $\Gamma_2$. By induction, the edges of $\Gamma_3$ are contained in some simple cycles and since $\Gamma_3$ is a maximal segment in $\Gamma$, there exists a simple cycle $C^{(3)}$ in $\Gamma_2$ which contains all the edges of $\Gamma_3$. Thus, $C$ and $C^{(3)}$ share a common maximal segment $\Gamma_3$ in $\Gamma$.

Now, let us assume that $\Gamma$ satisfies $(1)$, $(2)$, $(3)$. We aim to show that $\Gamma$ is a net with $\eta(\Gamma) \geq 2$. By $(1)$, it is enough to show that $\Gamma$ is a net. Denote $r(\Gamma) := |E| -|V|$. By $(2)$, the degree of each vertex in $\Gamma$ is at least $2$. Thus, $r(\Gamma) \geq 0$. If $r(\Gamma) = 0$, then the degree of each vertex is $2$. So, by $(2)$, $\Gamma$ is a simple cycle. This contradicts $(1)$. 

If $r(\Gamma) = 1 $, then $|E| = \frac{1}{2}\sum_{v \in V} \deg(v) = |V| + 1$. Thus, either $\Gamma$ has a unique vertex $u$ of degree greater than two with $\deg(u)=4$, or $\Gamma$ has precisely two vertices of degree $3$. We show that the former case does not arise.

To the contrary, suppose that $u$ is a unique vertex of degree $4$ in $\Gamma$ and there is no other vertex with a degree greater than $2$. Let $v$ be a vertex adjacent to $u$ in $\Gamma$. By $(2)$, the edge between $u$ and $v$ is contained in some simple cycle of $\Gamma$, say $C^{(1)}$. By $(3)$, there exists a simple cycle $C^{(2)}$ in $\Gamma$, different from $C^{(1)}$, such that $C^{(1)}$ and $C^{(2)}$ shares a common maximal segment in $\Gamma$. Since the end points of this maximal segment are of a degree greater than $2$, we have a contradiction. 

Thus, $\Gamma$ has precisely two vertices of degree $3$, say $u$ and $v$ and there is no other vertex with a degree greater than $2$. By (3), each simple cycle in $\Gamma$ contains at least two vertices of degree greater than $2$. Thus, each simple cycle in $\Gamma$ necessarily contains $u$ and $v$. Let $C^{(1)}$ be a simple cycle containing $u$ and $v$. By $(3)$, there exists a simple cycle $C^{(2)}$ in $\Gamma$ which shares a maximal segment with $C^{(1)}$. The endpoints of this segment must be $u$ and $v$.

Note that any vertex of $\Gamma$ is contained either in $C^{(1)}$ or $C^{(2)}$. To see this, suppose there exists $w \in V$ which is not in the vertex set of $C^{(1)} \bigcup C^{(2)}$.
Then there is a simple path terminating at a vertex $w'$ of $C^{(1)} \bigcup C^{(2)}$ and has edges lying outside $C^{(1)} \bigcup C^{(2)}$. Thus, $\deg(w') \geq 3$, and hence $w' \in \{u, v\}$. Thus, either $\deg(u) > 3$ or $\deg(v) > 3$, which is a contradiction.

Finally, we conclude that $\Gamma = C^{(1)} \bigcup C^{(2)}$, which is a net
with $\eta(\Gamma) = 2$.

If $r(\Gamma) \geq 2$, then we proceed by induction and assume that the lemma holds for every finite connected graph $\Gamma'$ with $r(\Gamma') < r(\Gamma)$. Let $C^{(1)}$ be a cycle in $\Gamma$. By $(3)$, there exists a cycle $C^{(2)}$ in $\Gamma$ which shares a maximal segment, say $\Gamma_1$ with $C^{(1)}$. Let $\Gamma_2$ be the subgraph obtained by deleting the edges of $\Gamma_1$. It is clear that $r(\Gamma_2) = r(\Gamma) - 1$. By induction, $\Gamma_2$ is a net, and hence $\Gamma$ is also a net.   
\end{proof}

\begin{figure}[h]
\begin{tikzpicture}
  [scale=.6,auto=left,every node/.style={circle,fill=blue!20}]
  \node (n1) at (1,8)  {$v_1$};
  \node (n2) at (4,12)  {$v_2$};
  \node (n3) at (7,8)  {$v_3$};
  \node (n4) at (4,4)  {$v_4$}; 
  \node (n5) at (13,8) {$v_5$};
  \node (n6) at (19,8) {$v_6$};

  \foreach \from/\to in {n1/n2,n2/n3,n3/n4,n4/n1,n2/n5,n5/n4,n6/n5}
      \draw (\from) -- (\to);
\end{tikzpicture}
\caption{Example of a graph that is not net, but satisfies conditions (1) and (3) of Lemma \ref{net equivalent definition}. \label{2-is-non-redundant}}
\end{figure}

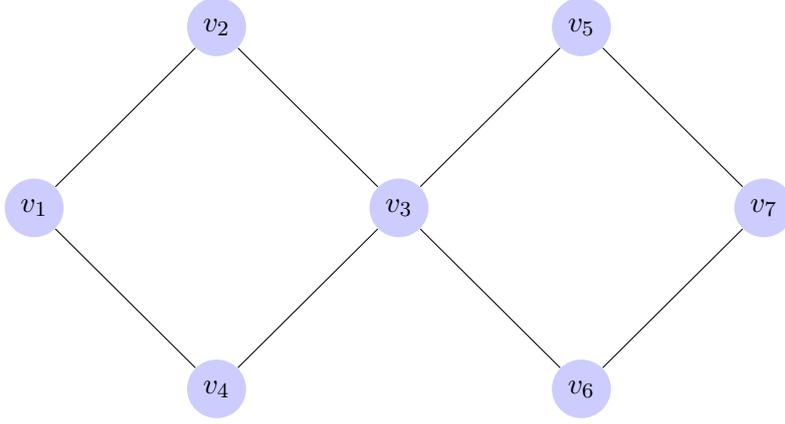
\begin{figure}
\begin{tikzpicture}
  [scale=.6,auto=left,every node/.style={circle,fill=blue!20}]
  \node (n1) at (1,8)  {$v_1$};
  \node (n2) at (5,12)  {$v_2$};
  \node (n3) at (9,8)  {$v_3$};
  \node (n4) at (5,4)  {$v_4$}; 
  \node (n5) at (13,12)  {$v_5$};
  \node (n6) at (13,4)  {$v_6$};
  \node (n7) at (17,8)  {$v_7$};

  \foreach \from/\to in {n1/n2,n2/n3,n3/n4,n4/n1,n3/n5,n3/n6,n5/n7,n6/n7}
      \draw (\from) -- (\to);
\end{tikzpicture}
\caption{Example of a graph that is not net, but satisfies conditions (1) and (2) of Lemma \ref{net equivalent definition}.
\label{3-is-non-redundant}
}
\end{figure}

\begin{remark}
 None of the conditions in Lemma \ref{net equivalent definition} is redundant. To see this, first observe that the condition $(1)$ is needed to ensure that $\eta(\Gamma) \geq 2$. Further, the graphs in Figure \ref{2-is-non-redundant} and Figure \ref{3-is-non-redundant} demonstrate that the conditions $(2)$ and $(3)$ are also non-redundant.
\end{remark}


We now define the notion of bad cycles, admissible sets, and anchors in a graph. 
\begin{definition}\label{bad cycle}
A simple cycle $C_r: u_{1} \to u_{2} \to \dots \to u_{r} \to u_1$ in a graph $\Gamma$ is said to be a \textit{bad cycle} if $\deg_{\Gamma}(u_{i}) > 2$ for $i \leq r-2$; and $\deg_{\Gamma}(u_{r-1})= \deg_{\Gamma}(u_{r}) = 2$.
\end{definition}

A set $W := \{u_1, u_2, \dots, u_k \} \subseteq V$ is called {\it admissible} in $\Gamma$, if the supergraph of $\Gamma$ formed by adding an edge from each $u_i \in W$ to a new vertex $u_i' \notin V$ (where $u_i' \neq u_j'$ for $i \neq j$) does not contain a bad cycle. A maximal admissible set is called an \textit{anchor} in $\Gamma$.

A graph may have multiple or no anchors. However, we note that the set
$$\mathcal P := \{v \in V : \deg_{\Gamma}(v) \neq 2 \text{ or no simple cycle contains } v\}$$ is a subset of all anchors. For a given anchor $\mathcal A$, the elements of $\mathcal A$ will be called {\it $\mathcal A$-points} and the rest of the elements of $V$ will be called {\it non $\mathcal A$-points}.


We make an observation: Let $\Gamma \ne C_3$ be a connected graph that does not contain a bad cycle. Let $\mathcal A$ be an anchor in $\Gamma$. Then a vertex $v \in \Gamma$ is a non $\mathcal A$-point if and only if there exists a cycle $C_r$ in $\Gamma$ containing $v$ whose all but two vertices are non $\mathcal A$-points.

The following lemma provides an iterative construction of a net.

\begin{lemma}\label{inductive construction net}
Let $\Gamma$ be a net that is not a triangle and does not contain bad cycles. Let $\mathcal{A}$ be an anchor for $\Gamma$. Then $\Gamma$ can be constructed iteratively, by taking a simple cycle having two non $\mathcal{A}$-points in the first step, and gluing the endpoints of a segment to two different points at each step to follow, in such a way that the segment to be glued has either one or two non $\mathcal{A}$-points. 
\end{lemma}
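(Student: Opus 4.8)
The plan is to induct on the height $\eta(\Gamma)$, at each stage stripping off a single maximal segment by Lemma \ref{deleting maximal segments from a net} and then reading the resulting chain of deletions backwards as a construction. Note that Lemma \ref{deleting maximal segments from a net} already shows that \emph{some} assembly of $\Gamma$ from a simple cycle by successive gluings of segments exists; the entire content of the statement is that the assembly can be chosen to \emph{respect the anchor}, i.e. so that the initial cycle carries exactly two non-$\mathcal{A}$-points and every segment glued afterwards carries one or two. Equivalently, I must arrange, at every step of a peeling, to delete a maximal segment whose interior meets the set of non-$\mathcal{A}$-points in exactly one or two vertices, while keeping precise control of the anchor on the smaller net.

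\textbf{Base case $\eta(\Gamma)=2$.} By Lemma \ref{eta(Gamma) equation} we have $|E|-|V|=1$, so, as in the proof of Lemma \ref{net equivalent definition}, $\Gamma = C^{(1)}\cup C^{(2)}$ is a union of two simple cycles meeting along a common maximal segment; concretely it is a generalized theta graph with exactly two vertices $u,v$ of degree $3$ joined by three internally disjoint segments $S_1,S_2,S_3$. Since $u,v$ lie in $\mathcal{P}$, they are $\mathcal{A}$-points, so every non-$\mathcal{A}$-point is interior to some $S_i$. Here I would carry out a direct analysis of the anchor on the theta graph, using the observation recorded just before the lemma, to single out a constituent cycle $S_i\cup S_j$ whose interior non-$\mathcal{A}$-points number exactly two; taking this cycle as the first step and gluing the third segment last realizes $\Gamma$, and the count of non-$\mathcal{A}$-points on that last segment must be checked to be one or two.

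\textbf{Inductive step $\eta(\Gamma)=r\geq 3$.} I would select a maximal segment $S$ of $\Gamma$ whose interior contains one or two non-$\mathcal{A}$-points. Its existence should be extracted from the observation before the lemma together with the maximality of $\mathcal{A}$: every non-$\mathcal{A}$-point sits on a cycle all but two of whose vertices are non-$\mathcal{A}$-points, and maximality of $\mathcal{A}$ must be shown to prevent a run of non-$\mathcal{A}$-points along a single maximal segment from being too long. Deleting $S$ yields, by Lemma \ref{deleting maximal segments from a net}, a net $\Gamma'$ with $\eta(\Gamma')=r-1$, to which the induction hypothesis applies once I produce a compatible anchor $\mathcal{A}'$ on $\Gamma'$; re-gluing $S$ as the final step then assembles $\Gamma$ with the required counts.

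The main obstacle is precisely the bookkeeping of anchors across the deletion. Removing $S$ lowers the degree of each of its two endpoints by one, so a former branch vertex of degree $3$ can become a degree-$2$ vertex of $\Gamma'$; this may change which cycles are bad in the sense of Definition \ref{bad cycle}, and hence may alter the admissible sets and the anchor. I would handle this by setting $\mathcal{A}' := \mathcal{A}\cap V(\Gamma')$, possibly adjusted at these two endpoints, and proving (a) that $\mathcal{A}'$ is again an anchor of $\Gamma'$ and (b) that every vertex of $\Gamma'$ which was a non-$\mathcal{A}$-point of $\Gamma$ remains a non-$\mathcal{A}'$-point, so that the non-$\mathcal{A}$-point counts transported by the induction are genuinely the ones for $\Gamma$. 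Establishing this invariance of the anchor structure under peeling a maximal segment, and simultaneously guaranteeing at each stage a segment meeting the non-$\mathcal{A}$-points in exactly one or two vertices, is the crux of the argument; the hypotheses that $\Gamma$ is not a triangle (and the exclusion of $C_3$ in the observation) are what rule out the degenerate few-vertex configurations in which these counts would break down.
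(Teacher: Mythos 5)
Your proposal has the right skeleton --- induction on $\eta(\Gamma)$, peeling a maximal segment via Lemma \ref{deleting maximal segments from a net} and reading the peeling backwards is exactly the paper's strategy --- but the two claims you yourself identify as ``the crux'' are announced rather than proved, and one of them hides the idea that actually makes the paper's argument work. Concretely: you assert that at each stage there exists a maximal segment whose interior meets the non-$\mathcal{A}$-points in exactly one or two vertices, to be ``extracted from the observation before the lemma together with the maximality of $\mathcal{A}$.'' The difficulty is not the upper bound of two (every maximal segment has degree-$2$ interior, so any simple cycle meeting its interior contains all of it, and maximality of $\mathcal{A}$ plus bad-cycle-freeness caps the count at two --- this is the paper's first claim); the difficulty is the lower bound of one. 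The paper does not look for an arbitrary good segment: it takes the segment $\Gamma_1$ glued \emph{last} in some iterative construction, and when $\Gamma_1$ carries \emph{no} non-$\mathcal{A}$-point it exploits the fact that $\Gamma_1$ closes up with a path $P_1$ in $\Gamma_2$ into a simple cycle $C$, which (being not bad) forces at least two non-$\mathcal{A}$-points onto $P_1$; it then \emph{swaps} $\Gamma_1$ for the maximal segment $\Gamma_3\subseteq P_1$ through one of these points and peels $\Gamma_3$ instead. This swap along a common cycle is the step your plan is missing, and without it the existence claim you need is not justified.

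The second deferred claim --- the anchor bookkeeping --- is also genuinely problematic in the form you state it. Deleting a maximal segment destroys cycles, so $\mathcal{A}\cap V(\Gamma')$ is admissible in $\Gamma'$ but typically fails to be \emph{maximal} there, and the failure need not be localized at the two endpoints of the deleted segment: a non-$\mathcal{A}$-point of $\Gamma$ whose witnessing cycle ran through the deleted segment may become addable in $\Gamma'$. Any anchor $\mathcal{A}'\supseteq\mathcal{A}\cap V(\Gamma')$ then has \emph{fewer} non-points, so a segment with one or two non-$\mathcal{A}'$-points produced by the induction could carry three or more non-$\mathcal{A}$-points, which is exactly the wrong direction for your conclusion. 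Your desiderata (a) and (b) are therefore in tension, and resolving that tension is not a routine adjustment. (Your base case at $\eta=2$ is also unnecessary; the paper starts the induction at $\eta=1$, where the statement reduces to the fact that a simple cycle other than $C_3$ has exactly two non-$\mathcal{A}$-points, as in Lemma \ref{net parity}.)
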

\begin{proof}
We prove it by induction on $\eta (\Gamma)$. If $\eta (\Gamma) =1$, then it follows trivially. Now, suppose $r := \eta(\Gamma) \geq 2$ and that the lemma holds for all nets $\Gamma'$ with $\eta(\Gamma') < r$. Let $\Gamma_1$ be a segment, and $\Gamma_2$ be a net with $n(\Gamma_2) = r-1$, such that after gluing $\Gamma_1$ at two distinct points (say $u$ and $w$) of $\Gamma_2$, we obtain $\Gamma$. By induction, the lemma holds for $\Gamma_2$. We first claim that $\Gamma_1$ can have at most two non $\mathcal{A}$-points of $\Gamma$. Note that $u$ and $w$ are $\mathcal{A}$-points of $\Gamma$ and all vertices of $\Gamma_1$ have degree $2$ in $\Gamma$ except the endpoints. Thus, if $C$ is a cycle in $\Gamma$, then either no non $\mathcal{A}$-point of $\Gamma$ is common in $\Gamma_1$ and $C$, or $\Gamma_1$ is a subgraph of $C$. Hence, if $\Gamma_1$ has two non $\mathcal{A}$-points, then all cycles intersecting $\Gamma_1$ have at least two non $\mathcal{A}$-points. We note that $\Gamma_2$ is free from bad cycles, and all simple cycles that do not share an edge with $\Gamma_1$ lie in $\Gamma_2$.
Further, simple cycles which have a common edge with $\Gamma_1$ are not bad cycles, provided $\Gamma_1$ has two non $\mathcal{A}$-points. Thus, the claim follows. 

Now, the following two possibilities arise.

     \begin{enumerate}
     \item \textit{Case $1$: $\Gamma_1$ has at least one non $\mathcal{A}$-point.}

 In this case, to construct the net $\Gamma$ iteratively, we can first construct $\Gamma_2$ iteratively, and then glue the segment $\Gamma_1$ having either one or two non $\mathcal{A}$-points.

     \item \textit{Case $2$: $\Gamma_1$ has no non $\mathcal{A}$-point.}

Since $u, w \in \Gamma_2$ and $\Gamma_2$ is connected, there exists a simple path $P_1 : u \to u_1 \to u_2 \to \cdots \to u_s \to w$ in $\Gamma_2$ between $u$ and $w$. The path $P_1$ along with $\Gamma_1$ forms a simple cycle, say $C$ in $\Gamma$. Since $C$ is not a bad cycle and $\Gamma_1$ contains no non $\mathcal{A}$-point of $\Gamma$, the path $P_1$ has at least two non $\mathcal{A}$-points of $\Gamma$. We pick a point, say $u_i$, on the path $P_1$ which is a non 
$\mathcal{A}$-point of $\Gamma$.
Let $\Gamma_3$ be the maximal segment in $P_1$ containing $u_i$. Let $\Gamma_4$ be the graph obtained after deleting the edges of $\Gamma_3$ from $\Gamma$. Then by Lemma \ref{deleting maximal segments from a net}, $\Gamma_4$ is a net and by Lemma \ref{eta(Gamma) equation}, $\eta(\Gamma_4) = \eta(\Gamma) - 1 = r-1$. Now, by induction, the lemma holds for $\Gamma_4$. By an argument as above, we can show that $\Gamma_3$ has at most two non $\mathcal{A}$-points. Hence, in this case, $\Gamma$ can be constructed iteratively by first constructing $\Gamma_4$ iteratively, and then gluing the segment $\Gamma_3$ which has one or two non $\mathcal{A}$-points of $\Gamma$. 
\end{enumerate} 
\end{proof} 

The following lemma provides a sign function for a net without bad cycles.

\begin{lemma}\label{net parity}
Let $\Gamma$ be a net that does not contain a bad cycle and
$\mathcal{A}$ be an anchor for $\Gamma$. Then there exists a function $\sigma : V \to \{0, 1, -1\}$ such that 
    
\begin{enumerate}
\item $\sigma(v) = 0$ if and only if $v$ is a non $\mathcal{A}$-point of $\Gamma$.
\item If $u$, $v$ are adjacent $\mathcal{A}$-points of $\Gamma$, then $\sigma(u) = \sigma(v)$.
\item If $u$, $v$ are distinct $\mathcal{A}$-points of $\Gamma$ and there exists a non $\mathcal{A}$-point of $\Gamma$ that is adjacent to both $u$ and $v$, then $\sigma(u) = -\sigma(v)$.      
\end{enumerate}
\end{lemma}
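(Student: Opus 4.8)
The plan is to induct on $\eta(\Gamma)$ using the anchor-respecting construction of Lemma \ref{inductive construction net}, extending the sign function one glued segment at a time. Condition $(1)$ forces $\sigma\equiv 0$ on the non-$\mathcal{A}$-points, so the real content is to produce a $\{1,-1\}$-valued function on the $\mathcal{A}$-points obeying $(2)$ and $(3)$; I think of this as a signed labeling in which an edge between two adjacent $\mathcal{A}$-points demands equal signs and a length-two path through a single non-$\mathcal{A}$-point demands opposite signs. Before starting the induction I would record a structural fact that follows from the displayed observation preceding Lemma \ref{inductive construction net}: on any maximal segment of $\Gamma$ the interior vertices are either all $\mathcal{A}$-points or all non-$\mathcal{A}$-points. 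Indeed, every cycle through an interior vertex of a maximal segment must contain the whole segment together with its two endpoints, which are of degree $>2$ and hence $\mathcal{A}$-points; so an interior non-$\mathcal{A}$-point cannot coexist with an interior $\mathcal{A}$-point on the same segment without producing a cycle meeting at least three $\mathcal{A}$-points. Consequently a glued segment carrying non-$\mathcal{A}$-points has all-non-$\mathcal{A}$ interior, and the two cases allowed by Lemma \ref{inductive construction net} are: a segment $u\to a\to w$ with a single interior non-$\mathcal{A}$-point, or a segment $u\to a\to b\to w$ with two adjacent interior non-$\mathcal{A}$-points.

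For the base case $\eta(\Gamma)=1$ the graph is a simple cycle with at most two non-$\mathcal{A}$-points. These split the remaining $\mathcal{A}$-points into one or two arcs; I would set $\sigma$ constant on each arc, using opposite constants on the two arcs when the two non-$\mathcal{A}$-points are non-adjacent, and $\sigma=0$ on the non-$\mathcal{A}$-points. The only places where $(3)$ applies are the isolated non-$\mathcal{A}$-points, of which there are either none or exactly two, so the number of forced sign changes around the cycle is even and the assignment closes up consistently.

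For the inductive step, realize $\Gamma$ as the result of gluing the last segment $S$ at two points $u,w$ of a net $\Gamma'$ with $\eta(\Gamma')=\eta(\Gamma)-1$; since gluing a segment with interior vertices raises their degrees, $u,w$ have degree $>2$ in $\Gamma$, hence lie in $\mathcal{P}$ and are $\mathcal{A}$-points. By induction $\Gamma'$ carries $\sigma'$ satisfying $(1)$--$(3)$. If $S=u\to a\to b\to w$, neither $(2)$ nor $(3)$ constrains $\sigma(u),\sigma(w)$, so I extend $\sigma'$ by $\sigma(a)=\sigma(b)=0$ and finish. The decisive case is $S=u\to a\to w$, where $(3)$ demands $\sigma(u)=-\sigma(w)$. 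Here I would use that $a$ is a non-$\mathcal{A}$-point: by the observation there is a cycle through $a$ meeting only two $\mathcal{A}$-points, necessarily $u$ and $w$, whose complementary arc $P^{\ast}\subseteq\Gamma'$ is a segment with all-non-$\mathcal{A}$ interior. If $P^{\ast}$ has a single interior vertex, then $(3)$ applied inside $\Gamma'$ already yields $\sigma'(u)=-\sigma'(w)$. Otherwise $P^{\ast}$ imposes no relation, and I exploit the freedom that flipping the sign on an entire ``forced-equal'' component of $\mathcal{A}$-points preserves $(2)$ and $(3)$; thus, unless the constraints of $\Gamma'$ already force $\sigma'(u)=\sigma'(w)$, I may flip one component to arrange $\sigma'(u)=-\sigma'(w)$ and then set $\sigma(a)=0$.

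The hard part is precisely this last possibility: ruling out that the constraints of $\Gamma'$ force $\sigma'(u)=\sigma'(w)$. Equivalently, I must show that $\Gamma$ contains no simple cycle each of whose maximal segments is either an all-$\mathcal{A}$-point segment (equal-sign) or a single-interior-non-$\mathcal{A}$-vertex segment (opposite-sign), with an odd number of the latter, since such a cycle would be an unresolvable odd cycle of sign flips. This is where I expect to lean on the absence of bad cycles together with the maximality of the anchor: a single-interior-vertex non-$\mathcal{A}$ segment occurs only because adjoining its centre to $\mathcal{A}$ would create a bad cycle, and I would analyse the resulting near-bad configurations to argue that an odd flip-cycle cannot be assembled without forcing an honest bad cycle in $\Gamma$, contrary to hypothesis. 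Proving this parity statement cleanly--likely by a secondary induction on $\eta$ using the segment-deletion Lemma \ref{deleting maximal segments from a net}, while checking that the anchor restricts compatibly to the deleted subgraph--is the step I expect to demand the most care.
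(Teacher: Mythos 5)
Your overall strategy --- induction on $\eta(\Gamma)$ through Lemma \ref{inductive construction net}, a base case on simple cycles, and extension of the sign function across each newly glued segment --- is the same as the paper's. The genuine gap is exactly at the step you flag as ``the hard part'': when the glued segment carries a single non-$\mathcal{A}$-point you must arrange $\sigma(u)=-\sigma(w)$ at the two glue points, and you propose to get this either from a constraint already present in $\Gamma'$ or by flipping a ``forced-equal component'' of $\sigma'$, conceding that you cannot exclude the possibility that the constraints of $\Gamma'$ force $\sigma'(u)=\sigma'(w)$. That possibility is never ruled out, so the induction does not close. The paper closes it with no flipping and no parity analysis: since the interior non-$\mathcal{A}$-point $u_t$ of the segment is a non-$\mathcal{A}$-point, there is a simple cycle $C$ through $u_t$ having \emph{exactly two non-$\mathcal{A}$-points} (this is how the observation preceding Lemma \ref{inductive construction net} is actually used in the proof; as printed it says ``all but two vertices are non $\mathcal{A}$-points'', but the version invoked --- and the correct one --- is ``only two non-$\mathcal{A}$-points''). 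Since $u_t$ has degree $2$, $C$ contains the whole glued segment, so its complementary arc $\Gamma_3\subseteq\Gamma_2$ is a $w_1$--$w_s$ path with exactly one interior non-$\mathcal{A}$-point; conditions $(2)$ and $(3)$ applied along $\Gamma_3$ then force $\sigma'(w_1)=-\sigma'(w_s)$ for \emph{every} sign function supplied by the induction hypothesis. The same device yields $\sigma'(w_1)=\sigma'(w_s)$ in the two-point case. You instead took the observation literally (a cycle with exactly two $\mathcal{A}$-points), which produces a complementary arc with all-non-$\mathcal{A}$ interior and hence no usable constraint --- this is precisely what leaves you stuck.

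A secondary error: your preliminary claim that the interior vertices of a maximal segment are either all $\mathcal{A}$-points or all non-$\mathcal{A}$-points is false, and with it the reduction to segments of the shapes $u\to a\to w$ and $u\to a\to b\to w$. In the theta-graph consisting of two internally disjoint $u$--$w$ paths of length $4$ together with the edge $uw$, a maximal anchor is $\{u,w\}$ plus the middle vertex of each path, so the maximal segment $u\to a\to b\to c\to w$ has interior pattern (non-$\mathcal{A}$, $\mathcal{A}$, non-$\mathcal{A}$). Lemma \ref{inductive construction net} bounds only the number of non-$\mathcal{A}$-points on the glued segment, not its length; the paper's proof accordingly places the one or two non-$\mathcal{A}$-points at arbitrary positions $u_t$, resp.\ $u_i,u_j$, and assigns $\sigma(w_1)$ on the outer stretches, $-\sigma(w_1)$ between $u_i$ and $u_j$, and $0$ at the non-$\mathcal{A}$-points. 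Note in particular that two non-$\mathcal{A}$-points on a segment need not be adjacent, so such a segment does impose the relation $\sigma(u)=\sigma(w)$ (two sign flips), which also has to be --- and in the paper is --- checked against the labeling of $\Gamma_2$.
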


\begin{proof}
We apply induction on $\eta(\Gamma)$. If $\eta(\Gamma)=1$, then $\Gamma$ is a cycle. If $\Gamma = C_3$, then $\mathcal{A} = \emptyset$ and hence we define $\sigma : V \to \{0, 1, -1\}$ as $\sigma(v_i) = 0$, for each $v_i \in V$. If $\Gamma = C_n$ with $n >3$; say $C_n:v_1\to v_2\to \cdots \to v_n \to v_1$, then $\Gamma$ has two non $\mathcal{A}$-points, and they cannot be adjacent. Without loss of generality, let $k>2$ be such that $v_1$ and $v_k$ are the only two non $\mathcal{A}$-points of $C_n$. We assign 
\begin{align*}
&\sigma(v_1) = \sigma(v_k) = 0, \\
&\sigma(v_2) = \cdots = \sigma(v_{k-1}) = 1 \text{ and } \\
&\sigma(v_{k+1}) = \cdots = \sigma(v_{n}) = -1
\end{align*}
to exhibit a function $\sigma$ as asserted in the lemma.

Now, assume that $r := \eta(\Gamma) \geq 2$. Suppose, the lemma holds for all nets $\Gamma'$ with $\eta(\Gamma')<r$. Let $\Gamma$ be constructed by gluing the endpoints of the segment $\Gamma_1 : u_1\to u_2\to \cdots \to u_s$ at two points of a net $\Gamma_2$ with $\eta(\Gamma_2) = r-1$. By Lemma \ref{inductive construction net}, it can be assumed that the segment $\Gamma_1$ has either one or two vertices that are not $\mathcal{A}$-points. Let $u_1$ and $u_s$ be the vertices of $\Gamma_1$ to be glued to the $\mathcal{A}$-points $w_1$ and $w_s$ of $\Gamma_2$ to obtain $\Gamma$. We have the following two cases.

\begin{enumerate}
\item \textit{Case $1$: $\Gamma_1$ has only one non $\mathcal{A}$-point of $\Gamma$.}
Let $u_t$ be the unique non $\mathcal{A}$-point of $\Gamma$ that is contained in $\Gamma_1$. There must be a cycle, say $C$ in $\Gamma$, which has only two non $\mathcal{A}$-points and $u_t$ is one of them. Let $\Gamma_3$ be the part of $C$ present in $\Gamma_2$. So, $\Gamma_3$ contains a single non $\mathcal{A}$-point of $\Gamma$ and the vertices $w_1$ and $w_s$. Thus, by induction applied to $\Gamma_2$, we conclude that $\sigma(w_1) = -\sigma(w_s)$. We define 
\begin{align*}
&\sigma(u_1) = \cdots = \sigma(u_{t-1}) = \sigma(w_1), \\
&\sigma(u_{t+1}) = \cdots = \sigma(u_{s}) = \sigma(w_s) \text{ and } \\
&\sigma(u_t) = 0.
\end{align*} 
    The function $\sigma$ thus constructed is asserted in the lemma.

        \item \textit{Case $2$: $\Gamma_1$ contains two non $\mathcal{A}$-points of $\Gamma$.}

        Let $i<j$ and $u_i, u_j$ be two non $\mathcal{A}$-points in $\Gamma_1$. Thus, there must be a cycle $C$ in $\Gamma$ with exactly two non $\mathcal{A}$-points $u_i$ and $u_j$. Let $\Gamma_3$ be the part of $C$ present in $\Gamma_2$. Then $\Gamma_3$ is a segment that contains no non $\mathcal{A}$-points of $\Gamma$ and has $w_1$ and $w_s$ as endpoints. Thus, by induction on $\Gamma_2$, we conclude that $\sigma(w_1) = \sigma(w_s)$. We define
           \begin{align*}
    &\sigma(u_1) = \cdots = \sigma(u_{i-1}) = \sigma(u_{j+1}) = \sigma(u_{j+2}) \cdots = \sigma(u_{s}) = \sigma(w_1), \\
    &\sigma(u_{i+1}) = \cdots = \sigma(u_{j-1}) = -\sigma(w_1) \text{ and }\\
    &\sigma(u_i) = \sigma(u_j) = 0.
\end{align*} 
With this case, the proof of the lemma is complete.
    \end{enumerate}
\end{proof}

Let $\sigma$ be a function as in Lemma \ref{net parity}. Then $\sigma$ is called a \emph{sign} function on the graph $\Gamma$. A vertex $v$ of $\mathcal{A}$ is said to be of \textit{positive $\sigma$-parity} if $\sigma(v) = +1$, and \textit{negative $\sigma$-parity} if $\sigma(v) = -1$. We define $\sigma^{-}$ by $\sigma^{-}(v) = -\sigma(v)$ for all $v \in V$, and note that $\sigma^{-}$ is also a sign function. Thus, for a given $v \in V$, the Lemma \ref{net parity} asserts the existence of a sign function $\sigma$ such that the parity of $v$ with respect to $\sigma$ is non-negative.

\section{System of Balance Equations and Labeling of Graphs}\label{Labeling of Graphs}
We define a system of balance equations $E(D)$ over a commutative ring $R$ and associate an $R$-weighted graph $\Gamma(D)$ to it.
\subsection{Balance Equations}\label{Section on balance equations} Let $R$ be a commutative ring and $\varepsilon$ be a symbol. For an integer $n > 1$, let $$A(n) := \{(i,j) : 1 \leq i < j \leq n\},$$ 
and $D : A(n) \to R \cup \{\varepsilon\}$ be a function. Denote $d_{i,j} := D(i,j)$.
By \emph{support} of $D$ we mean the set 
$${\rm supp}(D) := \{(i,j) \in A(n) : d_{i,j} \neq \varepsilon\}.$$
Let $\mu(D)$ denote the number of elements in ${\rm supp}(D)$
and $m(D)$ denote the number of integers $k$ such that for some $1 \leq i \leq n$, either $(i,k)$ or $(k,i)$ lies in ${\rm supp}(D)$. 
We lose nothing but simplify notation by assuming $n = m(D)$, and hence take $n = m(D)$, throughout.

For $(i,j) \in {\rm supp}(D)$, we formulate the equation
$$E_{i,j}(D): x_i y_j - x_j y_i = d_{i,j}.$$
Let $E(D)$ denote the system of these equations as $(i,j)$ vary over ${\rm supp}(D)$. The system $E(D)$ consists of $\mu(D)$ equations in $2n$ variables. Each equation $E_{i,j}(D)$ in this system is called a \emph{balance equation}. We look for solutions of $E(D)$ in $R$.
Let $(\alpha, \beta) \in R^n \times R^n$, be a solution for $E(D)$, and $\alpha_k, \beta_k$ denote $k^{\rm th}$ coordinates of $\alpha$ and 
$\beta$, respectively. Then $(\alpha_k, \beta_k) \in R \times  R$ is called the 
$k^{\rm th}$-\emph{solution pair} of $(\alpha,\beta)$.

The system $E(D)$ can be represented as an $R$-weighted graph $\Gamma(D)$ with $n$ vertices $\{v_1, v_2,\cdots, v_n\}$, where for each $(i,j) \in {\rm supp}(D)$ there is an edge between $v_i$ and $v_j$ with weight $d_{i,j}$.
By a \emph{labeling} of vertices in $\Gamma(D)$, we mean an assignment $v_k \mapsto (a_k, b_k) \in R \times R$, for each vertex $v_k$. We call $a_k$ the \emph{$x$-label} and $b_k$ the \emph{$y$-label} of this labeling. A labeling of vertices in $\Gamma(D)$ is called \emph{consistent} if
$a_ib_j - a_jb_i = d_{i,j}$ for every $(i, j) \in {\rm supp}(D)$. Thus, a consistent labeling of vertices in $\Gamma(D)$ corresponds to a solution of the system $E(D)$, and vice versa. 

\subsection{Labeling of R-weighted graphs}\label{Labeling of R-weighted graphs}
This subsection is devoted to solving the system $E(D)$, or equivalently, to provide a consistent labeling on $\Gamma(D)$. Since variables $x_i, y_i$ across disjoint components of a graph do not interact in $E(D)$, it is safe to assume that $\Gamma(D)$ is a connected graph. We start with the case when $\Gamma(D)$ is a tree.

\begin{lemma}\label{labeling-a-tree}
If $\Gamma(D)$ is a tree, then it admits a consistent labeling for any ring $R$.
\end{lemma}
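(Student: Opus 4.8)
The plan is to prove a stronger statement by induction on the number of vertices: every tree $\Gamma(D)$ admits a consistent labeling in which each vertex $v_k$ is assigned a \emph{unimodular} pair, i.e.\ a pair $(a_k,b_k)$ with $a_k R + b_k R = R$. This strengthening to unimodularity is exactly what makes the induction work, since a pendant vertex can be attached to an already-labeled vertex precisely when that vertex carries a unimodular pair. For the base case, a single edge with weight $d_{1,2}$ is handled directly by setting $(a_1,b_1)=(1,0)$ and $(a_2,b_2)=(1,d_{1,2})$, so that $a_1 b_2 - a_2 b_1 = d_{1,2}$ and both pairs are unimodular.

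For the inductive step I would pick a pendant vertex $v_\ell$ (a tree always has a leaf), with unique neighbor $v_j$, and delete it together with its incident edge. The remaining graph is a tree on fewer vertices, so by induction it admits a consistent labeling with every pair unimodular; in particular $(a_j,b_j)$ is unimodular, say $s a_j + t b_j = 1$ for some $s,t \in R$. It then remains to choose $(a_\ell,b_\ell)$ so that the single balance equation on the edge $\{j,\ell\}$ holds and $(a_\ell,b_\ell)$ is again unimodular. Writing this equation as $a_j b_\ell - a_\ell b_j = d$ (the order of indices is irrelevant, amounting only to replacing $d$ by $-d$), the key observation is that as $(a_\ell,b_\ell)$ ranges over $R^2$ the quantity $a_j b_\ell - a_\ell b_j$ sweeps out the ideal $a_j R + b_j R = R$, so a solution always exists; concretely $(a_\ell,b_\ell)=(-td,\,sd)$ solves it.

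To simultaneously secure unimodularity I would add to this particular solution the homogeneous solution $(a_j,b_j)$ once, i.e.\ take $(a_\ell,b_\ell) = (a_j - td,\ b_j + sd)$. One checks directly that this still satisfies the edge equation, and the unimodularity then follows from the one-line computation $s(a_j - td) + t(b_j + sd) = s a_j + t b_j = 1$, where the cross terms cancel by commutativity of $R$. This completes the inductive step and hence the proof.

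The only genuine obstacle is the \emph{preservation} of unimodularity along the induction: solving the edge equation is routine once $(a_j,b_j)$ is unimodular, but without controlling the unimodularity of the newly created pair one could not attach further leaves at $v_\ell$ in later stages. The explicit choice $(a_j - td,\ b_j + sd)$ resolves this cleanly, and it is worth noting that no hypothesis on $R$ beyond commutativity is used anywhere, which is consistent with the assertion that the labeling exists \emph{for any ring} $R$.
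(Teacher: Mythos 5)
Your proof is correct. The induction is sound: the particular solution $(-td,sd)$ does satisfy $a_j b_\ell - a_\ell b_j = d(sa_j+tb_j)=d$, adding the homogeneous solution $(a_j,b_j)$ preserves the edge equation, and the computation $s(a_j-td)+t(b_j+sd)=sa_j+tb_j=1$ confirms unimodularity of the new pair. Your invariant (every vertex carries a unimodular pair) is in fact the exact solvability condition for attaching a new leaf, since $a_jb_\ell - a_\ell b_j$ ranges over the ideal $a_jR+b_jR$.

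The paper takes a slightly different, more rigid route: it fixes \emph{all} $x$-labels in advance to be arbitrary invertible elements $\alpha_i\in R^\times$ (e.g.\ $\alpha_i=1$), and then solves for the $y$-labels one leaf at a time via $\beta_n=(\beta_{n-1}\alpha_n+d_{n-1,n})\alpha_{n-1}^{-1}$. A pair with an invertible first coordinate is automatically unimodular, so the paper's invariant is a special case of yours; your argument is marginally more general in that it never needs to invert anything, only to express $1$ as an $R$-combination of the two coordinates. What the paper's version buys is the extra freedom recorded in its Remark~\ref{freedom-in-labeling-a-tree} --- the $x$-labels can be \emph{prescribed arbitrarily} among units and one $y$-label chosen freely --- which is exploited in the later gluing arguments (Lemmas~\ref{labeling-a-cycle}, \ref{borderless}, and Theorem~\ref{Arbitrary local rings}), where labelings of trees must be stitched onto already-labeled vertices with specified values. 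Your proof establishes the lemma as stated, but does not by itself yield that stronger prescription property. Both arguments implicitly use that $R$ has a unity (yours in the base pair $(1,0)$, the paper's in the existence of units), which is the standing convention of the paper.
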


\begin{proof}
Let $n = m(D)$ and $\alpha = (\alpha_1, \alpha_2, \dots, \alpha_n) \in R^n$ be arbitrary, except for the restriction that each $\alpha_i$ is invertible in $R$. We argue by induction on $n$. If $n = 2$, then
we assign $\beta_1 = b$, where $b$ is an arbitrary element in $R$, and $\beta_2 = (b\alpha_2 + d_{1,2}) {\alpha_1}^{-1} \in R$. Then $v_i \mapsto (\alpha_i, \beta_i)$ is indeed a consistent labeling.

Now, suppose $n > 2$. Let $1 \leq k, \ell \leq n$ be such that $v_\ell$ is a pendant vertex adjacent to $v_k$. A re-enumeration of vertices of $\Gamma(D)$ allows us to assume that $k=n-1$ and $l=n$. The induction hypothesis ascertains a consistent labeling $v_i \mapsto (\alpha_i, \beta_i)$ on the tree obtained by removing $v_n$ from $\Gamma(D)$. Now, we put $\beta_{n} = ({\beta}_{n-1} \alpha_{n} + d_{n-1,n}) {\alpha_{n-1}}^{-1} \in R$ to extend the labeling to $\Gamma(D)$ and make $v_i \mapsto (\alpha_i, \beta_i)$ a consistent labeling on $\Gamma(D)$.
\end{proof}

\begin{remark}\label{freedom-in-labeling-a-tree}
It follows from the proof of Lemma \ref{labeling-a-tree} that if $\Gamma(D)$ is a tree, then not only does a consistent labeling exist, but there is also considerable freedom in finding one. In fact, any arbitrary $\alpha \in R^n$ with invertible coordinates can be used for $x$-labels, and the $y$-label of one of the vertices can be arbitrarily assigned to construct a consistent labeling. Reciprocally, a consistent labeling can be constructed by assigning an arbitrary $\beta \in R^n$, with invertible coordinates, as $y$-labels, and the $x$-label of one of the vertices can be arbitrarily assigned.
\end{remark}

Let $R$ be a local ring with maximal ideal $\mathfrak m$. The residue class of $x \in R$ modulo $\mathfrak m$ will be denoted by $[x]$. Thus, $x \in R$ is non-invertible if and only if $[x] = [0]$. We denote by $R^{\times}$ the set of invertible elements in $R$. Thus, $R^{\times} = R \setminus \mathfrak m$.

We now define an unfavorable proximity for a bad cycle in a graph $\Gamma$.

Let $C_r: u_{1} \to u_{2} \to \dots \to u_{r} \to u_1$ be a bad cycle in a graph $\Gamma$ such that $\deg_{\Gamma}(u_{i}) > 2$ for $i \leq r-2$, and $\deg_{\Gamma}(u_{r-1})= \deg_{\Gamma}(u_{r}) = 2$. For each vertex $u_{i}$, $ 1 \leq i \leq r-2$ in $C_r$, let us pick an adjacent vertex $u_{i+r}$ outside $C_r$. Let $\mathcal{P}$ be the subgraph of $\Gamma$ containing $C_r$, vertices $u_{i+r}$, and the edges between $u_i$ and $u_{i+r}$. The subgraph $\mathcal{P}$ is said to be a \textit{proximity} for $C_r$ in $\Gamma$.

\begin{definition}\label{unfavorable proximity}
Let $\mathcal{P}$ be a proximity in the vertices $v_{i_1}, v_{i_2}, \cdots v_{i_{2r-2}}$ in a weighted graph $\Gamma(D)$ such that $C_r: v_{i_1} \to v_{i_2} \to \dots \to v_{i_r} \to v_{i_1}$ is a bad cycle and the vertex $v_{i_j}$ is adjacent to the vertex $v_{i_{j+r}}$ for each $j \in \{1,2,\cdots, r-2\}$. Then $\mathcal{P}$ is said to be \textit{unfavorable} if $[d_{i_1, i_2}]= [d_{i_2, i_3}] = \dots [d_{i_{r-2}, i_{r-1}}] = [d_{i_1, i_r}] = [0]$ and $[d_{i_{r-1}, i_r}]$, $[d_{i_1, i_{r+1}}]$, $[d_{i_2, i_{r+2}}], \dots , [d_{i_{r-2}, i_{2r-2}}] \ne [0]$.
\end{definition}

The following theorem shows that a consistent labeling is impossible when an unfavorable proximity is present in the graph.
\begin{theorem}\label{consistent labeling doesn't exist}
Let $R$ be a local ring and $D: A \to R \cup \{\epsilon\}$ be a function such that the graph $\Gamma(D)$ contains a bad cycle with unfavorable proximity. Then $\Gamma(D)$ does not admit a consistent labeling.
\end{theorem}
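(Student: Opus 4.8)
The plan is to argue by contradiction. Assume $\Gamma(D)$ admits a consistent labeling $v_k \mapsto (a_k, b_k)$ and reduce the entire configuration modulo the maximal ideal $\mathfrak m$. For each vertex, write $\bar P_k := ([a_k],[b_k]) \in (R/\mathfrak m)^2$ for the reduction of its label into the plane over the residue field $R/\mathfrak m$. The consistency condition $a_i b_j - a_j b_i = d_{i,j}$ reduces to $[a_i][b_j] - [a_j][b_i] = [d_{i,j}]$, which is exactly the $2\times 2$ determinant formed by $\bar P_i$ and $\bar P_j$. Since a $2\times 2$ determinant over a field vanishes if and only if its columns are linearly dependent, for every edge $(i,j)$ the vectors $\bar P_i$ and $\bar P_j$ are linearly dependent over $R/\mathfrak m$ precisely when $[d_{i,j}] = [0]$, and linearly independent precisely when $[d_{i,j}] \ne [0]$. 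This dictionary translates the arithmetic hypotheses of an unfavorable proximity into linear-algebra constraints on the reduced labels.

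Next I would use the spoke conditions to pin down the reduced labels. Enumerate the bad cycle as $u_1 \to u_2 \to \cdots \to u_r \to u_1$ following Definition \ref{unfavorable proximity}. For each $i \in \{1, \dots, r-2\}$ the spoke edge from $u_i$ to $u_{i+r}$ has unit weight, so $\bar P_i$ is linearly independent from $\bar P_{i+r}$; in particular each $\bar P_i$ with $1 \le i \le r-2$ is nonzero. This nonvanishing is exactly what makes parallelism propagate. The cycle edges $(u_1,u_2), (u_2,u_3), \dots, (u_{r-2},u_{r-1})$ all carry weights in $\mathfrak m$, so each consecutive pair $\bar P_i, \bar P_{i+1}$ is linearly dependent; since $\bar P_1, \dots, \bar P_{r-2}$ are all nonzero, they must all span one and the same one-dimensional subspace $L \subseteq (R/\mathfrak m)^2$.

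Finally I would force the two degree-two vertices onto $L$ and extract the contradiction. The edge $(u_{r-2}, u_{r-1})$ has weight in $\mathfrak m$ and $\bar P_{r-2}$ spans $L$, so $\bar P_{r-1} \in L$; likewise the closing edge $(u_r, u_1)$ has weight in $\mathfrak m$ and $\bar P_1$ spans $L$, so $\bar P_r \in L$. As $L$ is one-dimensional, $\bar P_{r-1}$ and $\bar P_r$ are linearly dependent, which by the dictionary forces $[d_{r-1,r}] = [0]$. This contradicts the unfavorable-proximity hypothesis $[d_{r-1,r}] \ne [0]$, so no consistent labeling can exist.

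I expect the only delicate point to be this propagation step, since linear dependence of vectors is not transitive in general: it is precisely the unit spoke weights, forcing $\bar P_1, \dots, \bar P_{r-2}$ to be nonzero, that restore transitivity and collapse the outer arc of the cycle onto a single line $L$. The two degree-two vertices $u_{r-1}$ and $u_r$ are then squeezed onto $L$ from the two ends of this arc, so the lone unit-weighted cycle edge joining them cannot be realized. In the base case $r = 3$ the arc is trivial: $\bar P_1 \ne 0$ by itself forces both $\bar P_2$ and $\bar P_3$ into $\mathrm{span}(\bar P_1)$, yielding the same contradiction.
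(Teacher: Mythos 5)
Your proof is correct, and it takes a genuinely different and more streamlined route than the paper's. The paper first proves, by a three-case propagation argument around the cycle, that every cycle vertex has \emph{both} coordinates invertible, and then runs an explicit elimination: solving each balance equation for a $\beta$-label, substituting along the cycle, and arriving at a closed formula expressing $d_{r-1,r}$ as an $R$-linear combination of $d_{1,r}, d_{1,2}, \dots, d_{r-2,r-1}$, all of which lie in $\mathfrak m$, forcing $[d_{r-1,r}]=[0]$. You instead reduce modulo $\mathfrak m$ at the outset and reinterpret each balance equation as the vanishing or non-vanishing of a $2\times 2$ determinant over the residue field, so that the unfavorable-proximity data becomes a collinearity statement: the spokes force $\bar P_1,\dots,\bar P_{r-2}$ to be nonzero, the zero-weight cycle edges then collapse them (and hence also $\bar P_{r-1}$ and $\bar P_r$) onto a single line $L$, and the one unit-weight cycle edge $(u_{r-1},u_r)$ becomes unrealizable. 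This buys a shorter, essentially computation-free argument that makes transparent exactly which hypotheses are used and why (the spokes guarantee nonvanishing, which is what makes ``linearly dependent'' transitive along the arc); the paper's computation, on the other hand, produces the explicit identity \eqref{d(r-1,r)3} relating the weights, which is in principle more information than the mod-$\mathfrak m$ statement, though the theorem does not need it. You also correctly flag the one delicate point (non-transitivity of linear dependence) and handle the base case $r=3$; your implicit restriction of the labeling to the proximity subgraph plays the same role as the paper's opening reduction. No gaps.
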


\begin{proof}
Note that it is enough to show that the theorem holds when $\Gamma(D)$ is itself an unfavorable proximity for some simple cycle $C_r$. Let the vertex set of $\Gamma(D)$ be $V:=\{ v_1, v_2, \dots v_{2r-2}\}$ such that the vertices $v_1, v_2. \dots, v_r$ form the cycle $C_r$ and the vertex $v_{r+i}$ is adjacent to the vertex $v_i$ in $\Gamma(D)$ for $i \in \{ 1, 2, \dots, r-2\}$. Since $\Gamma(D)$ is an unfavourable proximity of $C_r$, we have
$[d_{1,2}] = [d_{2,3}] = \dots = [d_{r-2,r-1}] = [d_{1,r}] = [0]$ and $[d_{r-1,r}] \ne [0], [d_{1,r+1}] \ne [0], [d_{2,r+2}] \ne [0], \dots , [d_{r-2,2r-2}] \ne [0]$.

Assume to the contrary that $v_k \mapsto (\alpha_k, \beta_k)$ is a consistent labeling of $\Gamma(D)$. Observe that every vertex $v_i$ of $\Gamma(D)$ supports an edge carrying a nonzero weight modulo $\mathfrak m$. Thus, for each $i \in \{ 1, 2, \dots , 2r-2\}$, both $[\alpha_i]$ and $[\beta_i]$ can not be simultaneously $[0]$.

We claim that if $i \in \{ 1, 2, \dots , r\}$, then $[\alpha_i]  \ne [0]$ and $[\beta_i] \ne [0]$. To show this, let $j \in \{1, 2, \cdots , r\}$ be such that exactly one of $[\alpha_j]$ or $[\beta_j]$ is equal to $[0]$. Without loss of generality, let $[\alpha_j] = [0]$ and $[\beta_j] \ne [0]$. We first show that $[\alpha_i] = [0]$ and $[\beta_i] \ne [0]$ for each $i \in \{ 1,2, \cdots , r\}$. For this purpose, We consider the following three cases depending on the $j$.

\noindent\textbf{Case 1}: $j=r$. \quad Since $(\alpha_1, \beta_1)$ and $(\alpha_r, \beta_r)$ are solution pairs of $v_1$ and $v_r$, respectively, we have $\alpha_1 \beta_r - \alpha_r \beta_1 = d_{1,r}$. Further, since $[d_{1,r}] = [0]$, $[\alpha_r] = [0]$ and $[\beta_r] \ne [0]$, we have $[\alpha_1] = [0]$. Thus, $[\beta_1] \ne [0]$, as $[\alpha_1]$ and $[\beta_1]$ are not simultaneously $[0]$. Now, since $(\alpha_1, \beta_1)$ and $(\alpha_2, \beta_2)$ are solution pairs of $v_1$ and $v_2$, respectively,
we have $\alpha_1 \beta_2 - \alpha_2 \beta_1 = d_{1,2}$. Further, since $[d_{1,2}] = [0]$, $[\alpha_1] = [0]$ and $[\beta_1] \ne [0]$, we have $[\alpha_2] = [0]$, and thus $[\beta_2] \ne [0]$. Similarly, for each $i \in \{1,2,\cdots, r-2\}$, we deduce inductively that $[\alpha_{i+1}] = [0]$ and $[\beta_{i+1}] \ne [0]$ using the conditions on $d_{i, i+1}$, $\alpha_i$ and $\beta_i$.

\noindent \textbf{Case 2:} $j=r-1$. \quad In this case, inductively for each $i \in \{ 2, 3, \cdots , r-1\}$, we conclude $[\alpha_{i-1}] = [0]$ and $[\beta_{i-1}] \ne [0]$, using the conditions on $d_{i-1, i}$, $\alpha_i$ and $\beta_i$. Finally, since $\alpha_1 \beta_r - \alpha_r\beta_1 = d_{1,r}$, we conclude that $[\alpha_r] = [0]$ and $[\beta_r] \ne [0]$.

\noindent \textbf{Case 3:} $j \in \{ 1, 2, \cdots , r-2\}$. \quad In this case, if $i \in \{ j, j+1, \cdots , r-2 \} $, then inductively we get $[\alpha_{i+1}] = [0]$ and $[\beta_{i+1}] \ne [0]$, using the conditions on $d_{i, i+1}$, $\alpha_i$ and $\beta_i$; and if $i \in \{ j, j-1, \cdots , 2 \} $, then inductively we get $[\alpha_{i-1}] = [0]$ and $[\beta_{i-1}] \ne [0]$, using the conditions on $d_{i-1, i}$, $\alpha_i$ and $\beta_i$. 

Now, since $[\alpha_{r-1}] = [0]$ and $[\alpha_r] = [0]$, from the relation $\alpha_{r-1} \beta_r - \alpha_r \beta_{r-1} = d_{r-1,r}$, it follows that $[d_{r-1,r}] = [0]$. This is a contradiction to the assumption that $[d_{r-1,r}] \ne [0]$. Thus, the claim holds. This facilitates further computation involving inverses of $\alpha_i$ and $\beta_i$ when $i \in \{1,2,\cdots,r\}$.

Since $\alpha_1$ is invertible in $R$, and
\begin{align} \label{E(1,r)}
\alpha_1 \beta_r - \alpha_r \beta_1 = d_{1,r}
\end{align}
\begin{align}\label{E(r-1,r)}
\alpha_{r-1}\beta_{r} - \alpha_{r} \beta_{r-1} = d_{r-1, r}
\end{align}
extracting $\beta_r$ from \ref{E(1,r)} and substituting it in \ref{E(r-1,r)}, we get
\begin{align}\label{d(r-1,r)}
d_{r-1, r} = \alpha_{r-1}(\alpha_r \beta_1 + d_{1,r}){\alpha_1}^{-1} - \alpha_{r} \beta_{r-1} = (\alpha_{r-1}\alpha_r ( \beta_1 {\alpha_1}^{-1}) - \alpha_{r} \beta_{r-1} )+\alpha_{r-1} d_{1,r} {\alpha_1}^{-1}.
\end{align}
Similarly, we have
\begin{align} \label{E(1,2)}
\alpha_1 \beta_2 - \alpha_2 \beta_1 = d_{1,2}. 
\end{align}

We use the invertibility of $\alpha_2$ to extract $\beta_1$ from \ref{E(1,2)}, and substitute it in \ref{d(r-1,r)} to obtain 
\begin{align}\label{d(r-1,r)2}
 d_{r-1, r} = (\alpha_{r-1}\alpha_r (\beta_2 {\alpha_2}^{-1}) - \alpha_{r} \beta_{r-1}) - \alpha_{r-1}\alpha_r d_{1,2}{\alpha_1}^{-1} {\alpha_2}^{-1} +\alpha_{r-1} d_{1,r} {\alpha_1}^{-1}.
\end{align}

We proceed along the same pattern and use invertibility of $\alpha_{i+1}$
in each of the equations $\alpha_i \beta_{i+1} - \alpha_{i+1}\beta_i = d_{i,i+1}$, for $i \in \{2,3,\cdots,r-2\}$, to finally obtain
\begin{align}\label{d(r-1,r)3}
d_{r-1, r} &= (\alpha_{r-1}\alpha_r (\beta_{r-1} {\alpha_{r-1}}^{-1}) - \alpha_r \beta_{r-1}) - \alpha_{r-1}\alpha_r\left(\sum_{i=1}^{r-2} d_{i,i+1} \alpha_{i}^{-1}\alpha_{i+1}^{-1}\right) + \alpha_{r-1} d_{1,r} {\alpha_1}^{-1} \\
&= - \alpha_{r-1}\alpha_r\left(\sum_{i=1}^{r-2} d_{i,i+1} \alpha_{i}^{-1}\alpha_{i+1}^{-1}\right) + \alpha_{r-1} d_{1,r} {\alpha_1}^{-1}. \notag
\end{align}
Since $[d_{1,r}] = [d_{1,2}] = [d_{2,3}] = \cdots [d_{r-2,r-1}] = [0]$, from \ref{d(r-1,r)3}
we obtain $[d_{r-1, r}] = [0]$, which is a contradiction. Hence, the assumption that $\Gamma(D)$ admits a consistent labeling is wrong, and the theorem follows.
\end{proof}



To obtain more specific results on labelings of $\Gamma(D)$, we consider $R$ to be a local ring with at least three residue classes modulo its maximal ideal $\mathfrak m$. The following lemma guarantees a consistent labeling when $\Gamma(D)$ is a cycle in more than $3$ vertices.

\begin{lemma}\label{labeling-a-cycle}
Let $R$ be a local ring with at least three residue classes modulo its maximal ideal $\mathfrak m$. For an integer $n > 3$, let $D : A(n) \to R  \cup \{\varepsilon\}$ be a function with 
${\rm supp}(D) = \{(i,i+1) : 1 \leq i \leq n-1\} \cup \{(1,n)\}$ so that $\Gamma(D)$ is a cycle. Let $v_r \neq v_n$ be a vertex in $\Gamma(D)$
that is not adjacent to $v_n$. Let $s < r$. Then for each $a, c \in R^{\times}$ and $b \in R$, the graph $\Gamma(D)$ admits a consistent labeling $v_k \mapsto (\alpha_k, \beta_k)$ such that 
\begin{enumerate}
\item[(i).] $\alpha_s = a$, $\beta_s = b$, $\beta_{r+1} = c$.
\item[(ii).] $\alpha_k \in R^{\times}$, whenever $k < r$.
\item[(iii).] $\beta_k \in R^{\times}$, whenever $r < k  < n$.
\end{enumerate}


\end{lemma}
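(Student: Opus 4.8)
The plan is to construct the labeling by walking around the cycle starting from the vertex $v_s$ and propagating constraints, handling the two arcs of the cycle (from $v_s$ toward $v_r$ and from $v_s$ away from $v_r$) in a way that keeps the relevant coordinates invertible. The key observation, already exploited in Lemma \ref{labeling-a-tree}, is that each balance equation $\alpha_i \beta_{i+1} - \alpha_{i+1}\beta_i = d_{i,i+1}$ determines one unknown coordinate linearly from the others, provided we have an invertible coordinate to divide by. So I would first fix $\alpha_s = a$, $\beta_s = b$ as prescribed, and then choose the remaining $x$-labels $\alpha_k$ for $k < r$ to be invertible, treating them as free parameters subject only to being units; this is possible because $R$ has at least three residue classes, which gives enough room to avoid the bad coincidences that would otherwise force a non-unit somewhere.

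First I would label the arc $v_s \to v_{s-1} \to \cdots \to v_1 \to v_n \to v_{n-1} \to \cdots$, using the equation $E_{i,i+1}(D)$ (or $E_{1,n}(D)$ at the wrap-around) to solve successively for the $\beta$ coordinates as I move in the direction where I want $y$-labels to be units, and to solve for $\alpha$ coordinates in the direction where I want $x$-labels to be units. Concretely, for $k < r$ I want $\alpha_k \in R^\times$, so on that arc I assign the $\alpha_k$ freely as units and then solve each balance equation for the corresponding $\beta_k$. For $r < k < n$ I want $\beta_k \in R^\times$, so on the complementary arc I would instead prescribe the $\beta_k$ as units and solve for the $\alpha_k$. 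The vertex $v_r$ (which is not adjacent to $v_n$) is the natural meeting point of the two propagation directions, and the condition $\beta_{r+1} = c$ is imposed as the seed for the second arc.

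The main obstacle is the closing condition: once I have propagated labels around both arcs and reach the final edge, the last balance equation must be satisfied automatically, and there is exactly one degree of freedom left to absorb. In the tree case this never arises because there is no cycle to close, but here the product around the cycle imposes one scalar constraint. The heart of the argument will be to show that, because $R$ has at least three residue classes modulo $\mathfrak{m}$, I can choose one of the still-free unit parameters (for instance one of the intermediate $\alpha_k$ or $\beta_k$) so that the closing equation is solvable while every coordinate I promised to be a unit remains outside $\mathfrak{m}$. The reason three residue classes suffice is that the bad events (a prescribed coordinate accidentally landing in $\mathfrak{m}$, or the closing equation forcing a division by a non-unit) each rule out at most one residue class for the free parameter, so with $|R/\mathfrak{m}| \geq 3$ there is always an admissible choice.

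I would organize the computation as an induction on $n$, or alternatively as an explicit two-pass assignment (backward pass for the $v_s$-to-$v_n$ arc, forward pass for the $v_{r+1}$-to-$v_r$ arc) followed by a single verification that the edge incident to $v_r$ closes up consistently. Throughout I would lean on Remark \ref{freedom-in-labeling-a-tree}, which records exactly how much freedom a tree labeling has: since deleting one edge of the cycle turns $\Gamma(D)$ into a path (a tree), I get a labeling of that path with free choice of all $x$-labels as units and one free $y$-label, and the only remaining task is to spend the leftover freedom to satisfy the one deleted edge's equation while preserving invertibility, which is precisely where the three-residue-class hypothesis is used.
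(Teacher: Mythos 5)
Your architecture is the same as the paper's: seed at $v_s$, propagate along the two arcs keeping $x$-labels invertible on one side and $y$-labels invertible on the other, and close the cycle at the single remaining vertex $v_n$, where the hypothesis $|R/\mathfrak m|\geq 3$ must be spent. However, there is a genuine gap at the closing step, and it is exactly where the whole difficulty of the lemma sits. After eliminating $\beta_n$ from the two equations at $v_n$, one is left with
$\alpha_n\bigl(\alpha_{n-1}\beta_1\alpha_1^{-1}-\beta_{n-1}\bigr)=d_{n-1,n}-\alpha_{n-1}d_{1,n}\alpha_1^{-1}$,
which is solvable for $\alpha_n$ precisely when the coefficient $\alpha_{n-1}\beta_1\alpha_1^{-1}-\beta_{n-1}$ is a unit, i.e.\ when the residue classes $[\beta_1\alpha_1^{-1}]$ and $[\beta_{n-1}\alpha_{n-1}^{-1}]$ are distinct (or one of $[\alpha_{n-1}]$, $[\beta_1]$ is $[0]$, in which case invertibility of $\beta_{n-1}$ saves you). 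Your claim that the $\alpha_k$ for $k<r$ can be ``assigned freely as units'' and that each bad event ``rules out at most one residue class for the free parameter'' does not survive inspection: along any stretch of an arc where the weights $d_{k-1,k}$ all lie in $\mathfrak m$, the class $[\beta_k\alpha_k^{-1}]$ is \emph{independent} of the choice of the unit $\alpha_k$ (it equals $[\beta_{k-1}\alpha_{k-1}^{-1}]$ no matter what), so there is no single leftover parameter whose variation sweeps through residue classes. If the two arcs happen to arrive at $v_1$ and $v_{n-1}$ with the same ratio class, the closing equation is simply unsolvable and no adjustment is available.

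What the paper does instead, and what your sketch is missing, is to make the ratio $[\beta_k\alpha_k^{-1}]$ an explicit invariant: it is constant along stretches with weights in $\mathfrak m$, and each time a unit weight $d_{k-1,k}$ is encountered the new coordinate becomes independent of the free choice, which allows the ratio to be \emph{reset} to any prescribed unit class (or the coordinate to be set to $0$). One then fixes, \emph{in advance}, a unit $\gamma$ with $\gamma-a^{-1}b\in R^\times$ (resp.\ $\gamma-a^{-1}\in R^\times$ if $[b]=[0]$) --- this is the one and only place $|R/\mathfrak m|\geq 3$ is used, since $[\gamma]$ must avoid both $[0]$ and $[a^{-1}b]$ --- and steers the second arc so that $[\beta_{n-1}\alpha_{n-1}^{-1}]=[\gamma]$ while the first arc keeps $[\beta_1\alpha_1^{-1}]=[a^{-1}b]$ (or $[a^{-1}]$). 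This also requires the case analysis for vertices where $[\beta_{k-1}]=[0]$ or where a coordinate such as $\alpha_{r+1}$ is taken to be $0$, which your proposal does not address. To complete your proof you would need to formulate and prove this invariant and the reset mechanism; the counting heuristic alone does not substitute for it.
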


\begin{proof}
For labeling vertices of $\Gamma(D)$, we split the vertex set into four parts and deal with them one by one.
The indices of these sets are: 
\begin{enumerate}
\item[(a).] $S_1 := \{k \in \mathbb N : s \leq \ell \leq r-1\}$
\item[(b).] $S_2 := \{k \in \mathbb N : r \leq \ell \leq n-1\}$
\item[(c).] $S_3 := \{k \in \mathbb N : 1 \leq \ell \leq s-1\}$
\item[(c).] $S_4 := \{n\}$
\end{enumerate}

\noindent{\bfseries Step 1}. Labeling the set $\{v_k : k \in S_1\}$, where $S_1:= \{k\in \mathbb N : s \leq k \leq r-1\}$. 

First, we set $\alpha_s = a$, $\beta_s = b$. 
We claim that for each $k \in S_1$, there exist $\alpha_{k} \in R^{\times}$ such that for the iteratively defined sequence $\{\beta_k\}_{k \in S_1}$, with $\beta_s = b$ and
$\beta_{k} = (d_{k-1,k} + \alpha_{k}\beta_{k-1})\alpha_{k-1}^{-1}$ for $s < k \in S_1$, we have
either $[\beta_{k}] = [0]$, or 
$$ [\beta_{k} {\alpha_{k}}^{-1}] = \begin{cases}
  [b a^{-1}], ~~~~~~~\quad \text{ if }[b] \ne [0], \\
  [a^{-1}], \quad \quad \text{ if }[b] = [0].
\end{cases} $$

We proceed to prove this claim. Let $k_1 < \cdots < k_t$ be all indices with $k_0 := s+1 \leq  k_{\ell} \leq r-1$
such that $[d_{k_\ell-1, k_{\ell}}] \neq [0]$. Thus, $[d_{k-1,k}] = [0]$, whenever $k_{\ell-1} < k < k_{\ell}$ for some $\ell \in \{1,2, \cdots, t\}$ or $k_t < k \leq r-1$. 

Note that since $\alpha_{i-1}\beta_i - \alpha_i\beta_{i-1} = d_{i-i,i}$, if $[d_{i-1,i}] =[0]$ and $[\alpha_{i-1}], [\alpha_{i}] \neq [0]$, then $[\beta_{i}] \neq [0]$ if and only if $[\beta_{i-1}] \neq [0]$. Moreover, $[\beta_{i} \alpha_{i}^{-1}] = [\beta_{i-1} \alpha_{i-1}^{-1}]$. Thus, for $s < i < k_1$ we have $[\beta_{i} \alpha_{i}^{-1}] = [a^{-1}b]$. Similarly, if $k_\ell < i < k_{\ell+1}$ for some $\ell \geq 1$, then $[\beta_{i} \alpha_{i}^{-1}] = [\beta_{k_\ell} \alpha_{k_\ell}^{-1}]$. Therefore, to examine all possibilities of $[\beta_i \alpha_{i}^{-1}]$, as $i \in S_1$, it is enough to assume that $i \in \{k_0, k_1, k_2, \cdots , k_t\}$. Consider the two cases, depending on the residue class of $[b]$.

Suppose $[b] = [0]$. If $s+1 < k_1$, then $[\beta_{s+1}] = 
[(d_{s,s+1} + \alpha_{s+1}\beta_s)a^{-1}]=
[\alpha_{s+1}\beta_s a^{-1}] = [\alpha_{s+1} b a^{-1}] = [0]$, for any choice $\alpha_{s+1} \in R^{\times}$. However, if $s+1 = k_1$, then $d_{s,s+1} \in R^{\times}$ and hence $[\beta_{s+1}] = [d_{s,s+1} a^{-1}] \ne [0]$. Thus, $\beta_{s+1}$ is independent of $\alpha_{s+1}$. We choose $\alpha_{s+1} = \beta_{s+1}a \in R^{\times}$ so that $[\beta_{s+1} \alpha_{s+1}^{-1}] = [a^{-1}]$.
Now, let $k > s+1$ and $k \in \{k_1, k_2, \cdots, k_t\}$. If $[\beta_{k-1}]=[0]$, then $[\beta_{k}] = [d_{k-1,k}\alpha_{k-1}^{-1}] \neq [0]$, which is independent of the choice of $\alpha_k$. We choose $\alpha_k = \beta_k a \in R^{\times}$, so that $[\beta_{k} \alpha_{k}^{-1}] = [a^{-1}]$. Further, if $[\beta_{k-1}] \neq [0]$, then we choose $\alpha_k = -\beta_{k-1}^{-1} d_{k-1,k} \in R^{\times}$, so that $\beta_k = 0$. Thus, the claim holds when $[b] = [0]$.

Now, suppose $[b] \neq [0]$. If $s+1 < k_1$, then $[d_{s,s+1}] = [0]$. Thus,
$[\beta_{s+1}]=[(d_{s,s+1} + \alpha_{s+1} b)a^{-1}] = [\alpha_{s+1} b a^{-1}]$, and hence
$[\beta_{s+1} \alpha_{s+1}^{-1}] = [b a^{-1}]$. If $s+1 = k_1$, then $d_{s,s+1} \in R^{\times}$, in which case we choose $\alpha_{s+1} = -{b}^{-1}d_{s,s+1} \in R^\times$, and obtain $\beta_{s+1} = 0$.
For $s+1 < k \in \{k_1, k_2, \cdots, k_t\}$, if $[\beta_{k-1}]=[0]$, then $[\beta_{k}] = [d_{k-1,k}\alpha_{k-1}^{-1}] \neq [0]$, making 
$[\beta_k]$ independent of $\alpha_k$. We choose $\alpha_{k} = \beta_{k}b^{-1}a$. Clearly, $[\beta_{k} \alpha_{k}^{-1}] = [b a^{-1}]$. Further, if $[\beta_{k-1}] \ne [0]$, then we choose $\alpha_{k} = -\beta_{k-1}^{-1} d_{k-1,k} \in R^{\times}$, so that $\beta_{k} = 0$. This shows the claim in this case.

\noindent{\bfseries Step 2}. Labeling the set $\{v_k : k \in S_2\}$, where $S_2:= \{k \in \mathbb N : r \leq k \leq n-1\}$.

Set $\alpha_{r} = d_{r,r+1}c^{-1}$ and $\beta_{r} = (d_{r-1,r} + d_{r,r+1}c^{-1}\beta_{r-1})\alpha_{r-1}^{-1}$, where $\alpha_{r-1}$ and $\beta_{r-1}$ are as in the previous case. Further, choose $\alpha_{r+1} =0$ and $\beta_{r+1}=c$.
Fix a $\gamma \in R^{\times}$ with the following property: $\gamma - a^{-1}b \in R^{\times}$, if $[b] \neq [0]$ and $\gamma - a^{-1} \in R^{\times}$, if $[b] = [0]$. Such a $\gamma$ exists because $|R/\mathfrak m| \geq 3$.

We claim that for each $k \in S_2\setminus \{r\}$, there exists $\beta_{k} \in R^{\times}$ such that for iteratively defined sequence $\{\alpha_k\}_{k \in  S_2\setminus \{r\}}$, with $\alpha_{r+1} =0$, $\beta_{r+1}=c$ and $\alpha_{k} = (\alpha_{k-1} \beta_{k} - d_{k-1,k})\beta_{k-1}^{-1}$ for $r+1 < k \in S_2$, we have either $[\alpha_k] = [0]$, or $[\beta_k {\alpha_k}^{-1}] = [\gamma]$.

Let $\ell \geq r+2$ be the smallest index such that $d_{\ell -1, \ell} \in R^{\times}$. Thus, for $r+2 \leq k < \ell$, we have $[d_{k-1,k}] = [0]$, and consequently
$$[\alpha_{k}] = [(\alpha_{k-1} \beta_{k} - d_{k-1,k})\beta_{k-1}^{-1}] = [\alpha_{k-1} \beta_{k}\beta_{k-1}^{-1}] = [\alpha_{k-2} \beta_{k}\beta_{k-2}^{-1}] = \cdots = [\alpha_{r+1} \beta_{k}\beta_{r+1}^{-1}] = [0],$$
for any choice of $\beta_k \in R^{\times}$. Since $\alpha_{\ell} = (\alpha_{\ell-1} \beta_{\ell} - d_{\ell-1,\ell})\beta_{\ell-1}^{-1}$, we have $[\alpha_{\ell}] = [ - d_{\ell-1,\ell}\beta_{\ell-1}^{-1}] \ne [0]$. Note that $[\alpha_{\ell}]$ is independent of $[\beta_{\ell}]$. We choose $\beta_{\ell} = \gamma \alpha_\ell \in  R^{\times}$. Hence, $[\alpha_{\ell}^{-1}\beta_{\ell}] = [\gamma]$. Now, for labeling each $v_k$ with $\ell < k \leq n-1$, we imitate what we did in Step 1.

\noindent{\bfseries Step 3}. Labeling the set $\{v_k : k \in S_3\}$, where $S_3:= \{k \in \mathbb N : 1 \leq k \leq s-1\}$.

Recall that we have assigned $\alpha_s = a$, $\beta_s = b$ in Step 1. Now, inductively define $\beta_{k} = (\alpha_{k} \beta_{k+1} - d_{k,k+1})\alpha_{k+1}^{-1}$, where the choices of $\alpha_{k} \in R^{\times}$ are made in a manner similar to Step 1 so that either $[\beta_{k}]=[0]$, or 
$[\alpha_{k}^{-1}\beta_{k}] = [a^{-1}]$ if $[b] = [0]$ and $[\alpha_{k}^{-1}\beta_{k}] = [a^{-1}b]$ if $[b] \ne [0]$.

\noindent{\bfseries Step 4}. Labeling the set $S_3:= \{n\}$.

Finally, we label $v_n$. Note that we have already labeled $v_{n-1}$ in Step 2 and $v_1$ just now in Step 3. Let $\beta_{n} = (\alpha_{n} \beta_{1} + d_{1,n})\alpha_{1}^{-1}$. Substitute it into the equation $\alpha_{n-1} \beta_n -\alpha_n \beta_{n-1} = d_{n-1,n}$. Then $\alpha_n (\alpha_{n-1}\beta_1 \alpha_1^{-1} - \beta_{n-1}) = d_{n-1,n} - \alpha_{n-1}d_{1,n}\alpha_1^{-1}$. This equation has a solution for $\alpha_n$ if $\alpha_{n-1}\beta_1 \alpha_1^{-1} - \beta_{n-1} \in R^{\times}$. Since $\alpha_1, \beta_{n-1} \in R^{\times}$, this holds if either $[\alpha_{n-1}] = [0]$ or $[\beta_1] = [0]$. However, if $[\beta_1] \ne [0]$ and $[\alpha_{n-1}] \ne [0]$, then by Step 2 and Step 3, $[\alpha_{1}^{-1}\beta_{1}] = [a^{-1}b]$ if $[b] \neq [0]$, $[\alpha_{1}^{-1}\beta_{1}] = [a^{-1}]$ if $[b] = [0]$, and $[\alpha_{n-1}^{-1}\beta_{n-1}] = [\gamma]$. Thus, by the definition of $\gamma$, we have that $\alpha_{n-1}^{-1}\beta_{n-1} - \alpha_{1}^{-1}\beta_{1} \in R^{\times}$. Therefore, $\alpha_{n-1}\beta_1 \alpha_1^{-1} - \beta_{n-1} \in R^{\times}$. This establishes the lemma. 
\end{proof}

The following corollary does not require the assumption $|R/\mathfrak m| > 2$.

\begin{corollary}\label{labeling-a-cycle-p=2}
Let $R$ be a local ring. For an integer $n > 3$, let $D : A(n) \to R  \cup \{\varepsilon\}$ be a function with 
${\rm supp}(D) = \{(i,i+1) : 1 \leq i \leq n-1\} \cup \{(1,n)\}$ so that $\Gamma(D)$ is a cycle. Let $v_r \neq v_n$ be a vertex in $\Gamma(D)$
that is not adjacent to $v_n$. Then for each $a_1, a_2, a_3, a_4 \in R^{\times}$, the graph $\Gamma(D)$ admits a consistent labeling $v_k \mapsto (\alpha_k, \beta_k)$ such that 
\begin{enumerate}
\item[(i).] $\alpha_k \in R^{\times}$, whenever $k < r$.
\item[(ii).] Either $[\beta_k] = [0]$ or $[\beta_k {\alpha_k}^{-1}] = [a_1]$, whenever $k < r$.
\item[(iii).] $\beta_k \in R^{\times}$, whenever $r < k < n$.
\item[(iv).] Either $[\alpha_k] = [0]$ or $[\beta_k {\alpha_k}^{-1}] = [a_2]$, whenever $k < r$.
\item[(v).] $\alpha_1 = a_3$ and $\beta_{r+1} = a_4$.
\end{enumerate}
\end{corollary}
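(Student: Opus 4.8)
The plan is to imitate the proof of Lemma~\ref{labeling-a-cycle}, but to exploit the crucial extra slack that the corollary prescribes no full solution pair at any interior vertex: it fixes only the single $x$-label $\alpha_1 = a_3$ and the single $y$-label $\beta_{r+1} = a_4$. This freedom lets me pre-commit the two ``missing'' coordinates $\beta_1$ and $\alpha_{r+1}$ to be non-units, and it is exactly this choice that replaces the hypothesis $|R/\mathfrak m| > 2$ used in the lemma. (I read condition (iv) as holding for $r < k < n$, matching Step~2 of Lemma~\ref{labeling-a-cycle}; as printed for $k < r$ it would clash with (i) and (ii).)

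First I would set $\alpha_1 = a_3$ and $\beta_1 = 0$, then propagate forward along the first segment $v_1 \to v_2 \to \cdots \to v_{r-1}$ via $\beta_k = (d_{k-1,k} + \alpha_k\beta_{k-1})\alpha_{k-1}^{-1}$, choosing each $\alpha_k \in R^{\times}$ exactly as in Step~1 of Lemma~\ref{labeling-a-cycle}. The only modification is the target slope: at the first edge of unit weight I set $\alpha_k = \beta_k a_1^{-1}$ (rather than $\beta_k a$), so that $[\beta_k\alpha_k^{-1}] = [a_1]$; thereafter zero-weight edges preserve this slope, and at any later unit-weight edge I may zero out $\beta_k$ by taking $\alpha_k = -\beta_{k-1}^{-1}d_{k-1,k}$. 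This maintains invariants (i)--(ii). Symmetrically, I set $\beta_{r+1} = a_4$ and $\alpha_{r+1} = 0$ and propagate forward along the second segment $v_{r+1} \to \cdots \to v_{n-1}$ via $\alpha_k = (\alpha_{k-1}\beta_k - d_{k-1,k})\beta_{k-1}^{-1}$, choosing each $\beta_k \in R^{\times}$ to keep (iii)--(iv) with target slope $[a_2]$; this is Step~2 of the lemma with $\gamma$ replaced by $a_2$.

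It remains to label the two degree-two junction vertices $v_r$ and $v_n$, each of which must satisfy the two balance equations on its incident edges. The pair $(\alpha_r,\beta_r)$ solves the $2 \times 2$ system from $E_{r-1,r}(D)$ and $E_{r,r+1}(D)$, whose determinant is (up to sign) $\beta_{r-1}\alpha_{r+1} - \alpha_{r-1}\beta_{r+1}$; since $[\alpha_{r+1}] = [0]$ while $\alpha_{r-1}$ and $\beta_{r+1} = a_4$ are units, this is a unit and the system has a unique solution in $R$. The pair $(\alpha_n,\beta_n)$ solves the system from $E_{n-1,n}(D)$ and $E_{1,n}(D)$, with determinant $\alpha_1\beta_{n-1} - \alpha_{n-1}\beta_1$; since $[\beta_1] = [0]$ while $\alpha_1 = a_3$ and $\beta_{n-1}$ are units, this too is a unit. (Here $r-1 < r$ and $r < n-1$, since $v_r \ne v_n$ being non-adjacent to $v_n$ forces $2 \le r \le n-2$, so $\alpha_{r-1}$ and $\beta_{n-1}$ are covered by (i) and (iii).)

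The point to be careful about is that this freedom is genuinely sufficient. In Lemma~\ref{labeling-a-cycle} both junction determinants had to be turned into units simultaneously, and in the case $[\beta_1]\ne[0]$, $[\alpha_{n-1}]\ne[0]$ one was forced to separate the residues $[a^{-1}b]$ and $[\gamma]$, which needed a third residue class. I sidestep this entirely by forcing $\beta_1$ and $\alpha_{r+1}$ to be non-units, so each determinant collapses modulo $\mathfrak m$ to a single unit term and no separation of residues is required. The one essential verification is that this pre-commitment is compatible with the prescribed invariants for arbitrary $a_1, a_2 \in R^{\times}$ --- i.e.\ that starting from $\beta_1 = 0$ one can still install the slope $[a_1]$ at the first unit-weight edge, and dually for $\alpha_{r+1} = 0$ and $[a_2]$ --- and this goes through because at a unit-weight edge the leading residue of the newly computed label is independent of the free coordinate.
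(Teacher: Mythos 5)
Your proposal is correct and follows essentially the same route as the paper: the same pre-commitments $\alpha_1=a_3$, $\beta_1=0$, $\alpha_{r+1}=0$, $\beta_{r+1}=a_4$, the same propagation along the two arcs borrowed from Lemma \ref{labeling-a-cycle} with $\gamma$ replaced by $a_1$ and $a_2$, and the same observation that forcing $\beta_1$ and $\alpha_{r+1}$ into $\mathfrak m$ makes both junction determinants units, which is what removes the $|R/\mathfrak m|>2$ hypothesis. Your resolution of $v_r$ and $v_n$ via $2\times 2$ systems with unit determinant is just the linear-algebra restatement of the paper's explicit substitutions in Steps 2 and 4 of Lemma \ref{labeling-a-cycle}, and your reading of condition (iv) as pertaining to $r<k<n$ matches the intended statement.
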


\begin{proof}
\noindent{\bfseries Step 1}. Labeling the set $\{v_k : k \in S_1\}$, where $S_1:= \{k \in \mathbb N : 1 \leq k \leq r-1\}$.

We first set $\alpha_1 = a_3$, $\beta_1 = 0$ and claim that for each $k \in \{1,2,\cdots, r-1\}$, there exists $\alpha_{k} \in R^{\times}$ such that for the iteratively defined sequence $\{\beta_k\}_{k \in  \{1,2,\cdots, r-1\}}$, with $\alpha_{1} =a_3$, $\beta_{1}=0$ and $\beta_{k} = (\alpha_{k} \beta_{k-1} + d_{k-1,k})\alpha_{k-1}^{-1}$ for $1 < k \leq r$, we have either $[\beta_k] = [0]$, or $[\beta_k {\alpha_k}^{-1}] = [a_1]$. This follows by an argument similar to the Step $2$ of Lemma \ref{labeling-a-cycle}, by fixing $\gamma = a_1$, reversing the role of $\alpha_k$ and $\beta_k$, and varying the indices over the set $\{1, 2, \cdots, r-1 \}$ instead of $\{r+1, r+2, \cdots, n-1\}$. 

\noindent{\bfseries Step 2}. Labeling the set $\{v_k : k \in S_2\}$, where $S_2:= \{k \in \mathbb N : r \leq k \leq n-1\}$.

This is achieved by following an argument similar to the Step $2$ of Lemma \ref{labeling-a-cycle} by fixing $\gamma = a_2$.

Finally, since $\beta_1 = 0$ and $\beta_{n-1} \in R^{\times}$, the element $\alpha_{n-1}\beta_1\alpha_1^{-1} - \beta_{n-1} = -\beta_{n-1}\in R^{\times}$. Thus, from Step $4$ of Lemma \ref{labeling-a-cycle}, a consistent choice for $\alpha_n$ and $\beta_n$ exists, by taking $\alpha_n = (d_{n-1,n} - \alpha_{n-1}d_{1,n}\alpha_1^{-1}) \beta_{n-1}^{-1}$ and $\beta_n = d_{1,n}\alpha_1^{-1}$.
\end{proof}

When $n=3$, under the further restriction that the maximal ideal
$\mathfrak m$ of $R$ is principal, we show in the following lemma that the triangular graph admits a consistent labeling.

\begin{lemma}\label{consistent labeling for a triangle}
Let $R$ be a local ring in which the unique maximal ideal $\mathfrak {m}$ is a principal ideal. Let $D: A(3) \to R \cup \{\epsilon\}$ be a function such that the graph $\Gamma(D)$ is a triangle. Then $\Gamma(D)$ admits a consistent labeling.
\end{lemma}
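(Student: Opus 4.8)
The plan is to view a consistent labeling as a choice of three vectors $w_i := (\alpha_i, \beta_i) \in R \times R$, $i = 1, 2, 3$, and to read the consistency conditions as three prescribed $2 \times 2$ determinants $\alpha_i \beta_j - \alpha_j \beta_i = d_{i,j}$ for $(i,j) \in \{(1,2), (1,3), (2,3)\}$. First I would settle the case in which one of the weights is invertible: if, say, $d_{1,2} \in R^{\times}$, then $w_1 = (1,0)$, $w_2 = (0, d_{1,2})$, $w_3 = (-d_{2,3}\,d_{1,2}^{-1}, d_{1,3})$ is directly checked to be consistent, and relabeling the three vertices (the triangle being symmetric in $v_1, v_2, v_3$) covers the cases $d_{1,3} \in R^{\times}$ and $d_{2,3} \in R^{\times}$. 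This reduces everything to the situation where all three weights lie in $\mathfrak m$.

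Two simple devices then drive the argument. The first is a scaling observation: for a fixed $\delta \in R$, replacing each $w_i = (\alpha_i, \beta_i)$ by $(\alpha_i, \delta \beta_i)$ multiplies every weight by $\delta$; hence a consistent labeling for weights $e_{i,j}$ yields one for weights $\delta e_{i,j}$. The second is the principality of $\mathfrak m = (\pi)$, which I would use to extract a common divisor. The key claim is that the ideal $(d_{1,2}, d_{1,3}, d_{2,3})$ is principal, say $(\delta)$. Granting this, write $d_{i,j} = \delta e_{i,j}$. If all $d_{i,j} = 0$ the constant labeling $w_i = (1,0)$ works; otherwise $\delta \neq 0$, and from $\delta \in (\delta e_{1,2}, \delta e_{1,3}, \delta e_{2,3})$ we get $\delta(1 - \sum_{i<j} r_{i,j} e_{i,j}) = 0$ for suitable $r_{i,j} \in R$, so $\sum_{i<j} r_{i,j} e_{i,j} \notin \mathfrak m$ (else $1 - \sum_{i<j} r_{i,j} e_{i,j}$ would be a unit and force $\delta = 0$). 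As $R$ is local this forces some $e_{i,j} \in R^{\times}$. I then apply the unit case to the weights $e_{i,j}$ and scale the $y$-labels of the resulting labeling by $\delta$, obtaining a consistent labeling for the original weights $d_{i,j} = \delta e_{i,j}$.

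The step I expect to be the main obstacle is precisely the claim that $(d_{1,2}, d_{1,3}, d_{2,3})$ is principal, and it is here that the hypothesis on $\mathfrak m$ is indispensable rather than cosmetic. The transparent case is $\bigcap_n \mathfrak m^n = 0$, which includes the motivating ring $R = \mathbb Z_p$: there every nonzero $d \in \mathfrak m$ has the form $d = \pi^{v(d)} u$ with $u \in R^{\times}$, so the principal ideals $(d_{i,j})$ are totally ordered by inclusion and the weight of least $\pi$-valuation generates the other two, giving $(d_{1,2}, d_{1,3}, d_{2,3}) = (\pi^{k})$ with $k = \min_{i<j} v(d_{i,j})$. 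I would isolate this divisibility statement as a short preliminary observation drawn from principality of $\mathfrak m$, and then combine it with the scaling device and the unit case to finish; the all-zero and unit cases are the degenerate endpoints of the same construction.
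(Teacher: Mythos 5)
Your proposal is correct and follows the same overall strategy as the paper's proof: split on whether some weight is a unit, handle that case by an explicit labeling, and use principality of $\mathfrak m$ to deal with the case where all weights lie in $\mathfrak m$. The difference is in the mechanism for the second case. The paper writes each $d_{i,j} = \gamma_{i,j}a^{\ell_{i,j}}$ with $\gamma_{i,j} \in R^{\times}$ and $\mathfrak m = (a)$, orders the exponents, and exhibits a second explicit labeling ($\alpha_1 = d_{1,3}$, $\alpha_2 = d_{2,3}$, $\alpha_3 = 0$, $\beta_1 = 0$, $\beta_2 = \gamma_{1,3}^{-1}\gamma_{1,2}a^{\ell_{1,2}-\ell_{1,3}}$, $\beta_3 = 1$). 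You instead extract a generator $\delta$ of $(d_{1,2},d_{1,3},d_{2,3})$, show some cofactor $e_{i,j}$ must be a unit, solve for the $e_{i,j}$, and rescale the $y$-labels by $\delta$; this is a clean reduction to the unit case rather than a second ad hoc formula. Both arguments rest on the same underlying fact --- that nonzero elements of $\mathfrak m$ are unit multiples of powers of the generator --- and you are right that this is where the hypothesis on $\mathfrak m$ does real work: it holds when $\bigcap_n \mathfrak m^n = 0$ (so in particular for $\mathbb Z_p$ and all Noetherian local rings with principal maximal ideal), and the paper's proof assumes it silently in exactly the same place where you flag it. Your version is in fact marginally more robust: the gcd formulation absorbs the possibility that some weight $d_{i,j}$ equals $0$ on an edge of the triangle (then $e_{i,j}=0$), whereas the paper's factorization $d_{i,j} = \gamma_{i,j}a^{\ell_{i,j}}$ with $\gamma_{i,j}$ a unit is not literally available for a zero weight and needs a one-line patch. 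Neither argument covers pathological local rings with principal maximal ideal in which $\bigcap_n\mathfrak m^n \neq 0$, but this is a shared limitation, not a defect of your proof relative to the paper's.
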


\begin{proof}
We split the proof into two cases.

\noindent\textbf{Case 1}: \emph{At least one of $d_{1,2}$, $d_{2,3}$ and $d_{1,3}$ is in $R^{\times}$}. \quad Without loss of generality, let $d_{2,3} \in R^{\times}$. Then the graph $\Gamma(D)$ admits a consistent labeling $v_k \mapsto (\alpha_k, \beta_k)$, where $\alpha_1 = d_{1,3}$, $\alpha_2 = d_{2,3}$, $\alpha_3 = 0$, $\beta_1 = d_{2,3}^{-1}(d_{1,3}- d_{1,2})$ and $\beta_2 = \beta_3 = 1$.

\noindent\textbf{Case 2}: \emph{$[d_{1,2}] = [d_{2,3}] = [d_{1,3}] = [0]$}. \quad Let $a \in R$ be such that $\mathfrak {m} = (a)$. Since $d_{1,2}, d_{2,3}, d_{1,3} \in \mathfrak{m}$, there exist $\ell_{i,j} \in \mathbb{N}$ and $\gamma_{i,j} \in R^{\times}$ such that $d_{i,j} = \gamma_{i,j} a^{\ell_{i,j}}$. Without loss of generality, let $\ell_{1,2} \geq \ell_{1,3}$. Then the graph $\Gamma(D)$ admits a consistent labeling $v_k \mapsto (\alpha_k, \beta_k)$, where $\alpha_1 = d_{1,3}$, $\alpha_2 = d_{2,3}$, $\alpha_3 = 0$, $\beta_1 = 0$, $\beta_2 = \left(\gamma_{1,3}^{-1} \gamma_{1,2}\right) a^{\ell_{1,2} - \ell_{1,3}}$ and $\beta_3 = 1$. 
\end{proof}

The following lemma provides a consistent labeling on a graph obtained by gluing two graphs with consistent labelings.

\begin{lemma}
Let $R$ be a commutative ring with unity and $D_1: A(n) \to R \cup \{\epsilon\}$, $D_2 : A(m) \to R \cup \{\epsilon\}$ be two functions. Let $\{v_1^{(1)}, v_2^{(1)}, \cdots, v_n^{(1)}\}$ be the set of vertices of the graph $\Gamma(D_1)$ and $\{v_1^{(2)}, v_2^{(2)}, \cdots, v_m^{(2)}\}$ be that of $\Gamma(D_2)$. Suppose $\Gamma(D_1)$ admits a consistent labeling $v_k^{(1)} \mapsto (\alpha_k, \beta_k)$, $\Gamma(D_2)$ admits a consistent labeling $v_k^{(2)} \mapsto (\gamma_k, \delta_k)$, and that
$(\alpha_i, \beta_i) = (\gamma_j, \delta_j)$ for some $v_i^{(1)}$ and $v_j^{(2)}$.
Let $D : A(n+m-1) \to R \cup \{\epsilon\}$ be a function such that $\Gamma(D) = \Gamma(D_1) \bigwedge_{v_i^{(1)}=v_j^{(2)}} \Gamma(D_2)$. Then the graph $\Gamma(D)$ admits a consistent labeling.
\end{lemma}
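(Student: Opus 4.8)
The plan is to build a single labeling on the wedge sum $\Gamma(D)$ by simply transporting the two given labelings onto the merged vertex set, an operation that is legitimate precisely because the two glued vertices have been assumed to carry the same label. Recall that in $\Gamma(D) = \Gamma(D_1)\bigwedge_{v_i^{(1)}=v_j^{(2)}}\Gamma(D_2)$ the vertex $v_i^{(1)}$ survives as the glue point while $v_j^{(2)}$ is deleted, so the vertex set is $\{v_1^{(1)},\dots,v_n^{(1)}\}\cup\{v_k^{(2)} : k\ne j\}$. I would define the labeling $L$ by $L(v_k^{(1)}) = (\alpha_k,\beta_k)$ for every $k$ and $L(v_k^{(2)}) = (\gamma_k,\delta_k)$ for $k\ne j$. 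The only identification made by the wedge sum is that of $v_i^{(1)}$ with $v_j^{(2)}$, and since by hypothesis $(\alpha_i,\beta_i)=(\gamma_j,\delta_j)$, the two candidate labels at the glue point coincide; hence $L$ is well defined.

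It then remains to check that $L$ is consistent, namely that $a_p b_q - a_q b_p$ equals the weight of each edge $\{v_p,v_q\}$ of $\Gamma(D)$, where $(a_p,b_p)=L(v_p)$. I would organize the edges of $\Gamma(D)$ into three classes following properties $(ii)$ and $(iii)$ of the wedge-sum definition. The first class consists of the edges inherited from $E_1$: both endpoints are vertices of $\Gamma(D_1)$ keeping their $(\alpha,\beta)$-labels, and the edge keeps its $D_1$-weight, so consistency is immediate from the consistency of the labeling of $\Gamma(D_1)$. The second class consists of the edges of $E_2$ not incident to $v_j^{(2)}$: both endpoints are vertices $v_p^{(2)},v_q^{(2)}$ with $p,q\ne j$ retaining their $(\gamma,\delta)$-labels and the original $D_2$-weight, so consistency follows from that of $\Gamma(D_2)$.

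The third class, where I expect to concentrate the argument (although it is still routine), consists of the edges of $\Gamma(D_2)$ that were incident to $w=v_j^{(2)}$ and are therefore re-routed to $v_i^{(1)}$ in the wedge sum. Such an edge joins $v_j^{(2)}$ to some $v_q^{(2)}$ with $q\ne j$ and carries some weight $d$; in $\Gamma(D)$ it becomes the edge between $v_i^{(1)}$ and $v_q^{(2)}$ with that same weight $d$. Its endpoints receive labels $(\alpha_i,\beta_i)$ and $(\gamma_q,\delta_q)$, so I need $\alpha_i\delta_q-\gamma_q\beta_i=d$; substituting $(\alpha_i,\beta_i)=(\gamma_j,\delta_j)$ rewrites the left-hand side as $\gamma_j\delta_q-\gamma_q\delta_j$, which is exactly the balance relation supplied by consistency of the $\Gamma(D_2)$-labeling on the edge between $v_j^{(2)}$ and $v_q^{(2)}$. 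Hence $L$ is consistent on every edge and is the desired labeling. The only genuine care needed is bookkeeping: confirming that the re-routing preserves edge weights and produces no parallel edges (the latter because $V_1$ and $V_2\setminus\{w\}$ are disjoint and $w$ has distinct neighbours in $\Gamma_2$), so that $\Gamma(D)$ really is a simple $R$-weighted graph on which $L$ makes sense.
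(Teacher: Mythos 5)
Your proposal is correct and is essentially the paper's own argument: the paper identifies the variables $x_i^{(1)}=x_j^{(2)}$, $y_i^{(1)}=y_j^{(2)}$ in the union of the two systems of balance equations and observes that the hypothesis $(\alpha_i,\beta_i)=(\gamma_j,\delta_j)$ lets the two solutions merge into one, which is exactly your transported labeling. Your edge-by-edge verification (in particular the check on edges re-routed from $v_j^{(2)}$ to $v_i^{(1)}$) just makes explicit what the paper summarizes as the solution ``naturally extending'' to $E(D)$.
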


\begin{proof}
For $k=1,2$, let $E(D_k)$ be the system of balance equations associated with $D_k$, containing the balance equations $E_{i,j}^{(k)} (D_k):= x_{i}^{(k)} y_{j}^{(k)} - x_{j}^{(k)} y_{i}^{(k)} = d_{i,j}^{(k)}$. Since $v_i^{(1)}=v_j^{(2)} \in \Gamma(D)$, the system of balance equations corresponding to $E(D)$, is obtained by identifying variables $x_i^{(1)} = x_j^{(2)}$ and $y_i^{(1)} = y_j^{(2)}$ in the union $E(D_1) \cup E(D_2)$. Since $E(D_1)$ and $E(D_2)$ admit solutions, with $x_i^{(1)} = \alpha_i, y_i^{(1)} = \beta_i, x_j^{(2)} = \gamma_j, y_j^{(2)} = \delta_j$ and $(\alpha_i, \beta_i) = (\gamma_j, \delta_j)$, the solution naturally extends to $E(D)$, providing $\Gamma(D)$ a consistent labeling.
\end{proof}

The following lemma provides a consistent labeling on a borderless graph that is not a triangle and does not contain bad cycles.

\begin{lemma}\label{borderless}
Let $R$ be a local ring with at least three residue classes modulo its maximal ideal $\mathfrak m$ and $D: A(n) \to R \cup \{\epsilon\}$ be a function such that the graph $\Gamma(D)$ is a borderless graph that is not a triangle and does not contain bad cycles. Let $\mathcal A$ be an anchor for $\Gamma$. Let $u \in \mathcal A$, and $a, b \in R$ be such that at least one of them is invertible in $R$. Then $\Gamma(D)$ admits a consistent labeling $v_k \mapsto (\alpha_k, \beta_k)$ such that 
\begin{enumerate}
\item[(i).] $u \mapsto (a, b)$ under this labeling.
\item[(ii).] For every $v_k \in \mathcal A$, at least one of $\alpha_k$ and $\beta_k$ is invertible in $R$.
\end{enumerate}

\end{lemma}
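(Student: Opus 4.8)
The plan is to induct on the height $s(\Gamma)$, building the labeling along the iterative construction of Lemma \ref{inductive-gluing-for-borderless}, in which $\Gamma$ is assembled by gluing a tree or a cycle at each step, and to assemble the labeling one piece at a time through the gluing lemma, always matching the two pieces at the single vertex along which they meet.

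For the base case $s(\Gamma)=1$ the graph is a tree or a simple cycle on more than three vertices. If $\Gamma$ is a tree it has no cycles, so $\mathcal P=V$ and the only anchor is $\mathcal A=V$; here I invoke Lemma \ref{labeling-a-tree} together with the freedom recorded in Remark \ref{freedom-in-labeling-a-tree}: when $a\in R^\times$ I take all $x$-labels invertible with $\alpha_u=a$ and set $\beta_u=b$, and when instead $b\in R^\times$ I use the reciprocal freedom to make all $y$-labels invertible with $\beta_u=b$ and $\alpha_u=a$. If $\Gamma=C_n$ with $n>3$, then $\mathcal A$ omits exactly two non-adjacent non-anchor points; after rotating (and, if necessary, reflecting) the cycle so that these two play the roles of $v_r$ and $v_n$ and the distinguished vertex $u$ plays the role of $v_s$ with $s<r$, Lemma \ref{labeling-a-cycle} furnishes a consistent labeling with $\alpha_k\in R^\times$ for $k<r$ and $\beta_k\in R^\times$ for $r<k<n$, so every anchor point carries an invertible coordinate while the two non-points absorb any unfavourable weights. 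When $a\in R^\times$ I apply the lemma with $(\alpha_s,\beta_s)=(a,b)$; when only $b\in R^\times$ I apply it to $-D$ with the $x$- and $y$-labels interchanged, which is legitimate because swapping the two coordinates in a balance equation merely negates its weight.

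For the inductive step $s(\Gamma)\ge 2$ I write $\Gamma=\Gamma_1\bigwedge_{v=w}\Gamma_2$, where $\Gamma_2$ is the tree or cycle glued last and $s(\Gamma_1)=s(\Gamma)-1$, and set $g:=v=w$. First, $g$ is an anchor point: being a cut vertex it lies on no cycle meeting both pieces, so if it lies on a cycle at all then $\deg_\Gamma(g)\ge 3$, and otherwise $g\in\mathcal P$; either way $g\in\mathcal P\subseteq\mathcal A$. Writing $\mathcal A_i:=\mathcal A\cap V(\Gamma_i)$, I regard each $\mathcal A_i$ as an anchor of the standalone piece $\Gamma_i$ with $g$ an anchor point. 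I then split on the location of $u$. If $u\in V(\Gamma_2)\setminus\{g\}$ I label $\Gamma_2$ by the base case with $u\mapsto(a,b)$, read off $g\mapsto(a_g,b_g)$ (which has an invertible coordinate since $g\in\mathcal A_2$), and apply the induction hypothesis to $\Gamma_1$ with distinguished vertex $g$ and prescribed value $(a_g,b_g)$. If $u\in V(\Gamma_1)$ I reverse the order, labelling $\Gamma_1$ first with $u\mapsto(a,b)$ to read off $g\mapsto(a_g,b_g)$ and then labelling $\Gamma_2$ with $g\mapsto(a_g,b_g)$. If $u=g$ I label both pieces with $g$ sent to $(a,b)$. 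In every case the two labelings agree at $g$, so the gluing lemma produces a consistent labeling of $\Gamma$; property (i) holds by construction, and since each anchor point of $\Gamma$ lies in one piece and is an anchor point there, property (ii) is inherited from the two partial labelings. Crucially, no piece is ever asked to honour two prescribed labels at once, so Lemma \ref{labeling-a-cycle}, which pins down a single anchor point, always applies.

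The step I expect to be the main obstacle is the anchor bookkeeping at the glue vertex: one must verify that $\mathcal A_i=\mathcal A\cap V(\Gamma_i)$ really is an anchor of the standalone piece $\Gamma_i$ with $g$ an anchor point whose two companion non-points lie off $g$ (and off $u$, when $u$ is in that piece). The delicate point is that $g$ has strictly larger degree in $\Gamma$ than in $\Gamma_i$; however, whether a cycle is bad depends only on the partition of its vertices into those of degree $2$ and those of degree $>2$, and since $g$ lies on a cycle it has degree $>2$ in both pictures. As $g$ is a cut vertex of the borderless graph $\Gamma$, every simple cycle lies within a single piece and the degrees of all its other vertices agree in $\Gamma_i$ and in $\Gamma$; hence bad cycles correspond exactly and $\mathcal A_i$ is an anchor of $\Gamma_i$, whose two non-points can always be chosen away from $g$ because $\Gamma_2$ is not a triangle (a triangle attached at a single vertex would itself be a bad cycle, contrary to hypothesis). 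For a tree piece every vertex is an anchor point and this reconciliation is immediate.
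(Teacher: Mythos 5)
Your proof follows essentially the same route as the paper's: induction on the height $s(\Gamma(D))$ via the decomposition of Lemma \ref{inductive-gluing-for-borderless}, the observation that the glue vertex is an anchor point, and the transfer of the label at that vertex from the piece containing $u$ to the other piece, finished off by the gluing lemma. You supply somewhat more detail than the paper does (the coordinate-swap trick when only $b$ is invertible, the rotation/reflection needed to put the cycle into the form required by Lemma \ref{labeling-a-cycle}, and the verification that $\mathcal A \cap V(\Gamma_i)$ is an anchor of the standalone piece), and your cut-vertex argument that the glue vertex lies in $\mathcal P$ is more direct than the paper's minimality-of-height contradiction, but the strategy is the same.
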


\begin{proof}
Since the graph $\Gamma(D)$ is borderless, by Lemma \ref{inductive-gluing-for-borderless}, it can be constructed iteratively in finitely many steps by gluing either a tree or a cycle at each step. We proceed by induction on $r := s({\Gamma(D))}$. For $r = 1$, the proof follows from Lemma \ref{labeling-a-tree} and Lemma \ref{labeling-a-cycle}.

Let $r \geq 2$ and $D_i : A_i \to R \cup \{\epsilon\}$; $i=1,2$ be two functions such that 
\begin{enumerate}
\item[(a)] $\Gamma(D_1)$ is a subgraph of $\Gamma(D)$.
\item[(b)] $s(\Gamma(D_1)) = r-1$. 
\item[(c)] $\Gamma(D_2)$ is either a tree or a cycle.
\item[(d)] Gluing $\Gamma(D_2)$ to $\Gamma(D_1)$ along appropriate vertices yields the graph $\Gamma(D)$. 
\end{enumerate}
Let $v \in \Gamma(D_1)$ and $w \in \Gamma(D_2)$ be such vertices. We first claim that $v \in \mathcal A$. Since $\Gamma (D)$ is obtained after gluing $\Gamma(D_1)$ and $\Gamma(D_2)$ along $v$ and $w$, we have
$$\deg_{\Gamma (D)}(v) = \deg_{\Gamma(D_1)}(v) + \deg_{\Gamma(D_2)}(w)$$

If $\deg_{\Gamma (D)}(v) \neq 2$, then $v \in \mathcal A$, and the claim holds. Suppose $\deg_{\Gamma (D)}(v) = 2$. This is possible only when $v$ is a pendant vertex in $\Gamma(D_1)$ and $w$ is a pendant vertex in $\Gamma(D_2)$. Thus, $\Gamma(D_2)$ is a tree. Further, during the iterative construction of $\Gamma(D_1)$, the vertex $v$ would have appeared in it upon gluing a tree, say $T$, at some stage, to a graph $\Gamma(D_0)$. Thus, $\Gamma (D)$ can be constructed in lesser than $r$ steps by gluing the tree $T\bigwedge_{v=w} \Gamma(D_2)$ to the graph $\Gamma(D_0)$ at the same stage. This leads to the contradiction $s({\Gamma (D)}) < r$, and establishes our claim that $v \in \mathcal A$. 

Let us proceed with constructing consistent labeling. We now have two possibilities, $u \in \Gamma(D_1)$ or $u \in \Gamma(D_2)$. Let us first assume that $u \in \Gamma(D_1)$. 
Then $\mathcal A_1 := \mathcal A \cap V(\Gamma(D_1))$ is an anchor of $\Gamma(D_1)$ containing both $u$ and $v$.
By induction on the subgraph $\Gamma(D_1)$ and the anchor $\mathcal A_1$, we conclude that $\Gamma(D_1)$ admits a consistent labeling $v_k \mapsto (\alpha_k, \beta_k)$ with $u \mapsto (a,b)$ and at least one of $\alpha_k$ and $\beta_k$ is invertible for rest of the vertices $v_k \in \mathcal A_1$.

Now, we label $w$ in $V(\Gamma(D_2))$ with $(\alpha_{\ell}, \beta_{\ell})$, where $\ell$ is such that $v = v_{\ell}$. Applying induction on the subgraph $\Gamma(D_2)$ we obtain a consistent labeling $v_k \mapsto (\alpha_k, \beta_k)$ such that least one of $\alpha_k$ and $\beta_k$ is invertible for each vertices $v_k \in \mathcal A$. This completes labeling vertices in $\Gamma(D)$ and proves the lemma when $u \in \Gamma(D_1)$. The case when $u \in \Gamma(D_2)$ follows similarly by first applying induction on $\Gamma(D_2)$ and then on $\Gamma(D_1)$.
\end{proof}

The following lemma provides a consistent labeling on a net that is not a triangle and does not contain bad cycles.

\begin{lemma}\label{net}
Let $R$ be a local ring with at least three residue classes modulo its maximal ideal $\mathfrak m$ and $D: A \to R \cup \{\epsilon\}$ be a function such that the graph $\Gamma(D)$ is a net that is not a triangle and does not contain bad cycles. Let $\mathcal{A}$ be an anchor for $\Gamma(D)$ and $\sigma$ be a sign function on $\Gamma(D)$. Let $v_s \in \mathcal{A}$ be a vertex of positive $\sigma$-parity. Let $a \in R^{\times}$, and $b \in R$. Let $\gamma \in R^{\times}$ be such that $\gamma - a^{-1}b \in R^{\times}$, if $[b] \ne 0$ and $\gamma - a^{-1} \in R^{\times}$, if $[b] = 0$. Then $\Gamma(D)$ admits a consistent labeling $v_k \mapsto (\alpha_k, \beta_k)$ such that  

\begin{enumerate}
\item[(i).]  $[\alpha_k] \neq [0]$, if $v_k$ is a vertex of $\Gamma(D)$ with positive $\sigma$-parity.

\item[(ii).]  $[\beta_k] \neq [0]$, if $v_k$ is a vertex of $\Gamma(D)$ with negative $\sigma$-parity.

\item[(iii).] $\alpha_s = a$ and $\beta_s = b$.

\item[(iv).] If $[b] =[0]$, then
\begin{enumerate}
\item Either $[\beta_k] = [0]$ or $[\beta_k \alpha_k^{-1}] = [\alpha_s^{-1}]$, for every vertex $v_k$ of positive $\sigma$-parity.
\item Either $[\alpha_k] = [0]$ or $[\beta_k \alpha_k^{-1}] = [\gamma]$, for every vertex $v_k$ of negative $\sigma$-parity.
\end{enumerate}

\item[(v).] If $[b] \ne [0]$, then
\begin{enumerate}
\item Either $[\beta_k] = [0]$ or $[\beta_k \alpha_k^{-1}] = [\alpha_s^{-1}\beta_s]$, for every vertex $v_k$ of positive $\sigma$-parity.
\item Either $[\alpha_k] = [0]$ or $[\beta_k \alpha_k^{-1}] = [\gamma]$, for every vertex $v_k$ of negative $\sigma$-parity.
\end{enumerate}
\end{enumerate}
\end{lemma}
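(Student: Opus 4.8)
The plan is to induct on $\eta(\Gamma)$, using Lemma \ref{labeling-a-cycle} as the engine both for the base case and for extending a labeling across each newly glued segment in the inductive step. The combinatorial skeleton of the induction comes from Lemma \ref{inductive construction net}, which lets us peel off a segment carrying one or two non-$\mathcal{A}$-points, while Lemma \ref{net parity} records, in each of its two cases, the relative $\sigma$-parities of the two gluing vertices. The point of the elaborate conditions (i)--(v) is bookkeeping: they guarantee that at each gluing vertex the inductive hypothesis has already fixed the value of the quotient $[\beta_k\alpha_k^{-1}]$ --- namely $[a^{-1}b]$ or $[a^{-1}]$ at a positive-parity vertex (according as $[b]\ne[0]$ or $[b]=[0]$) and $[\gamma]$ at a negative-parity vertex --- which is exactly the data needed to close the next segment.

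For the base case $\eta(\Gamma)=1$, $\Gamma$ is a simple cycle, and since it is not a triangle it has $n>3$ vertices with exactly two non-$\mathcal{A}$-points. After a cyclic re-enumeration I would place these two non-$\mathcal{A}$-points at $v_r$ and $v_n$ and arrange $s<r$ for the given positive-parity vertex $v_s$; this is possible because $v_s$ lies on one of the two arcs joining the non-$\mathcal{A}$-points. I would then invoke Lemma \ref{labeling-a-cycle} with the prescribed $a,b$, with its internal parameter taken to be our $\gamma$, and with any unit for $c$. Properties (i)--(iii) are immediate from clauses (i)--(iii) of that lemma under the correspondence positive parity $\leftrightarrow k<r$, negative parity $\leftrightarrow r<k<n$, while (iv) and (v) are read directly off Steps 1--3 of its proof, where the quotients $[\beta_k\alpha_k^{-1}]$ are shown to equal $[a^{-1}b]$, $[a^{-1}]$, or $[\gamma]$, or else a coordinate is forced to be non-invertible.

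For the inductive step, suppose $r:=\eta(\Gamma)\geq 2$. Using Lemma \ref{inductive construction net} together with Lemma \ref{deleting maximal segments from a net}, I would realise $\Gamma$ as the result of gluing a segment $\Gamma_1$, at two $\mathcal{A}$-points $w_1,w_s$, to a net $\Gamma_2$ with $\eta(\Gamma_2)=r-1$, where $\Gamma_1$ carries one or two non-$\mathcal{A}$-points. Since $v_s$ is a positive-parity vertex, it is an $\mathcal{A}$-point, so I may choose the peeled maximal segment to be one not containing $v_s$ in its interior, ensuring $v_s\in\Gamma_2$ (or $v_s\in\{w_1,w_s\}$). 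Applying the inductive hypothesis to $\Gamma_2$, its anchor $\mathcal{A}\cap V(\Gamma_2)$, the restricted sign function, and the same data $a,b,\gamma$ produces a labeling of $\Gamma_2$; in particular $w_1,w_s$ receive labels whose quotients $[\beta\alpha^{-1}]$ are known from (iv)--(v). It then remains to label the interior of $\Gamma_1$. Guided by Lemma \ref{net parity}: if $\Gamma_1$ has one non-$\mathcal{A}$-point then $\sigma(w_1)=-\sigma(w_s)$ and I propagate inward from each endpoint toward the single parity-$0$ pivot (the two sub-arcs being of opposite parity, labelled by the Step 1 and Step 2 recursions of Lemma \ref{labeling-a-cycle}); if $\Gamma_1$ has two non-$\mathcal{A}$-points then $\sigma(w_1)=\sigma(w_s)$ and there is an opposite-parity middle sub-arc bounded by two pivots. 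In every configuration the pivots are precisely the vertices at which a coordinate is permitted to be non-invertible, which absorbs the slack.

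The main obstacle is the consistency of the balance equation(s) incident to the pivot vertices, i.e. the analogue of Step 4 of Lemma \ref{labeling-a-cycle}: after propagating labels inward from $w_1$ and $w_s$, the final equation at a pivot is solvable only if a certain difference of the inherited quotients $[\beta\alpha^{-1}]$ is a unit. This is exactly where $|R/\mathfrak m|\geq 3$ enters, through the defining property of $\gamma$: a positive-parity endpoint contributes $[a^{-1}b]$ or $[a^{-1}]$ and a negative-parity endpoint contributes $[\gamma]$, and $\gamma$ was chosen so that $\gamma-a^{-1}b$ (resp. $\gamma-a^{-1}$) is invertible. The crux is to verify that these are indeed the quotients realised at $w_1,w_s$ --- so that the pivot equation is solvable and, simultaneously, the freshly assigned interior labels again satisfy (i)--(v) with the same reference values $[a^{-1}b]$/$[a^{-1}]$ and $[\gamma]$ --- which is what keeps the induction self-perpetuating. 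The remaining arithmetic (choosing a unit at each non-pivot vertex to hold the quotient constant, and solving the pivot equation) is routine and parallels the computations already carried out in Lemma \ref{labeling-a-cycle}.
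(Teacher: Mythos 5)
Your overall strategy coincides with the paper's: induct on $\eta(\Gamma(D))$, use Lemma \ref{labeling-a-cycle} for the base case, peel off a segment via Lemma \ref{inductive construction net}, read the parities of the two gluing vertices from Lemma \ref{net parity}, propagate labels along the new segment by the Step~1/Step~2 recursions, and close at the pivot using the unit $\gamma-a^{-1}b$ (resp.\ $\gamma-a^{-1}$), which is where $|R/\mathfrak m|\geq 3$ enters. All of that matches the paper's proof.

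The one substantive divergence is your claim that you ``may choose the peeled maximal segment to be one not containing $v_s$ in its interior, ensuring $v_s\in\Gamma_2$.'' This is asserted, not proved, and it is not a consequence of the lemmas you cite: Lemma \ref{inductive construction net} only guarantees that \emph{some} iterative construction with segments carrying one or two non-$\mathcal{A}$-points exists; it does not let you dictate which maximal segment is glued last. To justify the shortcut you would need to argue (a) that when $\deg(v_s)=2$ there is a \emph{different} maximal segment carrying one or two non-$\mathcal{A}$-points (this does follow from the facts that $v_s$ is interior to at most one maximal segment, that every simple cycle must carry at least two non-$\mathcal{A}$-points, and that some cycle avoids any given maximal segment, but none of this is said), and (b) that peeling that particular segment still yields, for the residual net $\Gamma_2$ and an anchor $\mathcal{A}'\supseteq\mathcal{A}\cap V(\Gamma_2)$ with compatible sign data, a construction to which the inductive hypothesis applies. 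The case you are eliminating --- $v_s$ lying in the interior of the glued segment --- is precisely the case occupying the second half of the paper's proof, where one must first label part of the segment outward from $v_s$, then invoke the inductive hypothesis on $\Gamma(D_2)$ with a \emph{prescribed} label at one gluing vertex, and only then finish the remaining sub-arcs and pivots. So either supply the argument for (a) and (b), or handle that case directly as the paper does; as written, this step is a genuine gap.
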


\begin{proof}
The proof is by induction on $\eta(\Gamma(D))$. If $\eta(\Gamma(D))=1$, then it follows directly from Lemma \ref{labeling-a-cycle}. We now assume that $\eta(\Gamma(D))=r \geq 2$ and that the lemma holds whenever $\Gamma(D) < r$. By Lemma \ref{inductive construction net}, the graph $\Gamma(D)$ can be constructed iteratively in $r$ steps by taking a simple cycle having two non $\mathcal{A}$-points in the first step and then gluing the endpoints of a segment to two different points at each step. Let $D_1: A \to R \cup \{\epsilon\}$ and $D_2: A \to R \cup \{\epsilon\}$ be two functions such that $\Gamma(D_2)$ is the subgraph of $\Gamma(D)$, obtained after $r-1$ steps of such an iterative construction of $\Gamma(D)$, and $\Gamma(D_1):u_1 \to u_2 \to \cdots \to u_m$ is the segment glued at the last step to the net $\Gamma(D_2)$, to finally obtain $\Gamma(D)$.

We choose an anchor $\mathcal{A}'$ of $\Gamma(D_2)$ such that $\Gamma(D_2) \cap\mathcal A \subseteq \mathcal{A}'$. This choice can be made because $\Gamma(D_2) \cap \mathcal A$ is an admissible set of vertices in $\Gamma(D_2)$. Since $\eta(\Gamma(D_2)) = r-1$, the lemma holds for $\Gamma(D_2)$ by induction hypothesis. Let $v_{t_1}$ and $v_{t_2}$ be the vertices of $\Gamma(D_2)$ that are glued, respectively, to the end points $u_1$ and $u_m$ of $\Gamma(D_1)$ at the last step of iterative construction to obtain $\Gamma(D)$. By Lemma \ref{inductive construction net}, it can be assumed that $\Gamma(D_1)$ has either one or two non $\mathcal{A}$-points.
    
We first assume that $v_s$ is a vertex of $\Gamma(D_2)$. Further, assume that there is a unique non $\mathcal{A}$-point $u_i$ of $\Gamma(D)$ that is contained in $\Gamma(D_1)$. By Lemma \ref{net parity}, the vertices $v_{t_1}$ and $v_{t_2}$ are of opposite $\sigma$-parity in $\Gamma(D)$, the vertices $v_{t_1}, u_2, u_3, \dots, u_{i-1}$ have the same $\sigma$-parity in $\Gamma(D)$, and the vertices $u_{i+1}, u_{i+2}, \dots, u_{m-1}, v_{t_2}$ have the same $\sigma$-parity in $\Gamma(D)$.
This is because the vertices $u_1$ and $u_m$ are glued to $v_{t_1}$ and $v_{t_2}$, respectively, and $u_i$ is the unique non $\mathcal{A}$-point in $\Gamma(D_1)$. By induction, $\Gamma(D_2)$ admits a consistent labeling $v_k \mapsto (\alpha_k, \beta_k)$ such that $x_s = a$, $y_s =b$, and the conditions $(i)$, $(ii)$, $(iv)$ and $(v)$ in the lemma are satisfied for all $\mathcal{A}'$-points. Without loss of generality, we assume that $v_{t_1}, u_2, u_3, \dots, u_{i-1}$ have positive $\sigma$-parity, and the vertices $u_{i+1}, u_{i+2}, \dots, u_{m-1}, v_{t_2}$ have negative $\sigma$-parity in $\Gamma(D)$. Thus, the consistent labeling of $\Gamma(D_2)$ asserts that $[\alpha_{t_1}] \neq [0]$ and $[\beta_{t_2}] \neq [0]$.

Now, we proceed with extending this labeling to $\Gamma(D)$. We apply an idea
similar to Step $1$ of Lemma \ref{labeling-a-cycle}, for labeling $S_1 = \{v_{t_1}, u_2, u_3, \dots, u_{i-1}\}$. For labeling $S_2 = \{v_{t_2}, u_{i+1}, u_{i+2}, \dots, u_{m-1}\}$, we define an iterative sequence similar to the labeling of the vertices $v_{r+1}, v_{r+2}, \cdots, v_{n-1}$ in Step $2$ of Lemma \ref{labeling-a-cycle}.
Finally, using the idea as in Step $4$ of \ref{labeling-a-cycle}, we label the vertex $u_{i}$.

We now assume that $u_i, u_j, i<j$ are two non $\mathcal{A}$-points of $\Gamma(D)$ that are contained in $\Gamma(D_1)$. By Lemma \ref{net parity}, the vertices $v_{t_1}$ and $v_{t_2}$ are of the same $\sigma$-parity in $\Gamma(D)$. Without loss of generality, assume that they are of positive $\sigma$-parity. In this case, we again apply induction on $\Gamma(D_2)$, and obtain a consistent labeling $v_k \mapsto (\alpha_k, \beta_k)$ on $\Gamma(D_2)$ such that $x_s = a$ and $y_s =b$ and the conditions $(i)$, $(ii)$, $(iv)$ and $(v)$ in the lemma are satisfied for all $\mathcal{A}_1$-points. Thus, $[\alpha_{t_1}] \neq [0]$, $[\alpha_{t_2}] \neq [0]$. We proceed similar to Lemma \ref{labeling-a-cycle}, by taking $S_1 = \{v_{t_1}, u_2, u_3, \dots, u_{i-1}\}$, $S_2 = \{ u_i, u_{i+1}, \dots, u_{j-1} \}$, $S_3 = \{u_{j+1},  u_{j+2}, \dots, u_{m-1}, v_{t_2}\}$ and $S_4 = \{u_j\}$.
This addresses the case when $v_s$ is a vertex of $\Gamma(D_2)$.

Now, assume that $v_s$ is a vertex of $\Gamma(D_1)$. We first consider the case when $\Gamma(D_1)$ has a unique non $\mathcal{A}$-point $u_i$. By Lemma \ref{net parity}, the vertices $v_{t_1}$ and $v_{t_2}$ are of opposite $\sigma$-parity in $\Gamma(D)$. Without loss of generality, we assume that $v_s = u_\ell$, for some $\ell>i$. Thus, the vertices $v_{t_1}, u_2, u_3, \cdots, u_{i-1}$ have negative $\sigma$-parity and the vertices $u_\ell, u_{\ell+1}, \cdots, u_{m-1}, v_{t_2}$ have positive $\sigma$-parity. We first label the set $S_1 = \{ u_\ell, u_{\ell+1}, \cdots, u_{m-1}, v_{t_2}\}$ along the procedure in Step $1$ of Lemma \ref{labeling-a-cycle}. Consequently, let $(\alpha_{t_2}, \beta_{t_2})$ be the label thus assigned to $v_{t_2}$. By fixing the solution pair $(\alpha_{t_2}, \beta_{t_2})$ for $v_{t_2}$, we apply induction on $\Gamma(D_2)$ and obtain a consistent labeling for it that is in accordance with the conditions $(i)$, $(ii)$, $(iv)$ and $(v)$ of the lemma. Let this labeling assigns $(\alpha_{t_1}, \beta_{t_1})$ with $[\beta_{t_1}] \ne [0]$ as a solution pair for $v_{t_1}$. We then fix it and proceed by taking $S_2 = \{v_{t_1}, u_{2}, \cdots, u_{i-1}\}$ similar to the labeling of vertices $v_{r+1}, v_{r+2}, \cdots, v_{n-1}$ in Step $2$ of Lemma \ref{labeling-a-cycle}. Finally, we proceed similar to Step $3$ and Step $4$ of Lemma \ref{labeling-a-cycle} by taking $S_3 = \{u_{i+1}, u_{i+2}, \cdots, u_{\ell-1} \}$ and $S_4 = \{u_i\}$ to conclude the lemma in this case.

Now, we assume that $u_i, u_j, i<j$ are two non $\mathcal{A}$-points of $\Gamma(D)$ that are contained in $\Gamma(D_1)$. By Lemma \ref{net parity}, the vertices $v_{t_1}$ and $v_{t_2}$ are of the same $\sigma$-parity in $\Gamma(D)$. Let $v_s = u_\ell$ for some $i < \ell < j$. Then $v_{t_1}$ and $v_{t_2}$ are of negative $\sigma$-parity. Using the same idea as in Step $1$ and Step $2$ of Lemma \ref{labeling-a-cycle}, by taking $S_1 = \{u_\ell, u_{\ell + 1}, \cdots, u_{j-1}\}$ and $S_2 = \{u_j, u_{j + 1}, \cdots, u_{m-1}, v_{t_2}\}$, we label vertices in $S_1 \cup S_2$. Through this labeling, let $(\alpha_{t_2}, \beta_{t_2})$ be the solution pair for $v_{t_2}$. By fixing this solution pair for $v_{t_2}$ and applying induction on $\Gamma(D_2)$ for the restriction of the sign function $\sigma^{-}$, we obtain a consistent labeling for $\Gamma(D_2)$ that is in accordance with the conditions $(i)$, $(ii)$, $(iv)$ and $(v)$ of the lemma. Let this labeling assigns $(\alpha_{t_1}, \beta_{t_1})$ as a solution pair for $v_{t_1}$. Using the idea of Step $1$, Step $3$ and Step $4$ of Lemma \ref{labeling-a-cycle}, by taking $S_1 = \{v_{t_1}, u_{2}, \cdots, u_{i-1}\}$, $S_3 = \{u_{i+1}, u_{i+2}, \cdots, u_{\ell-1} \}$ and $S_4 = \{u_i\}$.

Finally, let $v_s = u_\ell$ for some $\ell < i < j$ or $ i < j < \ell$. Without loss of generality, let $\ell < i < j$. Then $v_{t_1}$ is of positive $\sigma$-parity and $v_{t_2}$ is of negative $\sigma$-parity. We proceed as in Step $1$, Step $2$ and Step $3$ of Lemma \ref{labeling-a-cycle} by taking $S_1 = \{u_\ell, u_{\ell + 1}, \cdots, u_{i-1}\}$, $S_2 = \{u_i, u_{i + 1}, \cdots, u_{j-1}\}$ and $S_3 = \{u_\ell-1, u_{\ell -2}, \cdots, u_{2}, v_{t_1}\}$ to label vertices in $S_1 \cup S_2 \cup S_3$. Let $(\alpha_{t_1}, \beta_{t_1})$ be the solution pair thus obtained for $v_{t_1}$. We fix this solution pair for $v_{t_1}$ and apply induction on $\Gamma(D_2)$ to obtain a consistent labeling for $\Gamma(D_2)$ that is in accordance with the conditions $(i)$, $(ii)$, $(iv)$ and $(v)$ of the lemma. Let this labeling assigns $(\alpha_{t_2}, \beta_{t_2})$ as a solution pair for $v_{t_2}$. Finally, we proceed similar to Step $3$ and Step $4$ of Lemma \ref{labeling-a-cycle} by taking $S_3 = \{u_{j+1}, u_{j+2}, \cdots, u_{m -1}, v_{t_2}\}$ and $S_4 = \{u_j\}$ to conclude the lemma in this case as well.
\end{proof}

The following theorem culminates previous lemmas and provides a consistent labeling on any graph that is not a triangle and does not contain bad cycles.

\begin{theorem}\label{without bad cycle}
Let $R$ be a local ring with at least three residue classes modulo its maximal ideal $\mathfrak m$ and $D: A \to R \cup \{\epsilon\}$ be a function such that the graph $\Gamma(D)$ is not a triangle and does not contain any bad cycle. Then $\Gamma(D)$ admits a consistent labeling.
\end{theorem}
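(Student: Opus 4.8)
The plan is to reduce the assertion to the two families already handled — general borderless graphs (Lemma \ref{borderless}) and nets (Lemma \ref{net}) — by cutting $\Gamma(D)$ at its articulation points and reassembling the labelings with the gluing lemma. I first settle the $2$-connected case. A $2$-connected graph that is a single cycle is not a triangle by hypothesis, so Lemma \ref{labeling-a-cycle} applies. If $\Gamma(D)$ is $2$-connected with more than one independent cycle, then every edge lies on a cycle, and for any cycle $C$ an ear attached to $C$ yields a second cycle $C'$ that meets $C$ along an arc; that arc contains a maximal segment common to $C$ and $C'$. Thus conditions $(1)$--$(3)$ of Lemma \ref{net equivalent definition} hold, $\Gamma(D)$ is a net, and Lemma \ref{net} supplies a consistent labeling because $\Gamma(D)$ is neither a triangle nor contains a bad cycle.

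For the general case I would induct on the number of edges. If $\Gamma(D)$ is $2$-connected we are done; otherwise choose an articulation point $v$ and write $\Gamma(D)=\Gamma(D_1)\bigwedge_{v=w}\Gamma(D_2)$ with both factors smaller, taking $\Gamma(D_1)$ to be a leaf of the block--cut tree (a single block together with the bridges and pendant trees attached to it), so that $\Gamma(D_1)$ is borderless or a net. Two inheritance facts must be checked. First, neither factor is a triangle, for a triangle meeting the rest of the graph only at $v$ would have its other two vertices of degree $2$ in $\Gamma(D)$ and would hence be a bad cycle, against hypothesis. Second, each factor must again be free of bad cycles: away from $v$ the degrees in a factor coincide with those in $\Gamma(D)$, so any bad cycle of a factor not having $v$ among its two degree-$2$ vertices is already a bad cycle of $\Gamma(D)$. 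Excluding the residual case, in which severing at $v$ lowers its degree and creates a bad cycle through $v$, is the structural heart of the argument, and is exactly what the no-bad-cycle hypothesis on $\Gamma(D)$ must guarantee for an appropriately chosen leaf. Granting this, the inductive hypothesis labels $\Gamma(D_2)$ and Lemma \ref{borderless} or Lemma \ref{net} labels $\Gamma(D_1)$.

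The remaining step, which I expect to be the main obstacle, is to force the two labelings to agree at the shared vertex so that the gluing lemma applies. As in the proof of Lemma \ref{borderless}, the gluing vertex may be taken to be an anchor point of each factor, and both Lemma \ref{borderless} and Lemma \ref{net} allow the label of a prescribed anchor point to be fixed beforehand. The delicate constraint is invertibility: the net lemma demands that the prescribed vertex carry an invertible $x$-label and have positive $\sigma$-parity, while the borderless lemma needs only one unit coordinate. I would therefore root the block--cut tree and label outward from the root, transmitting to each child the label that the parent assigned to $v$; by clause $(ii)$ of Lemma \ref{borderless} and clauses $(i)$--$(ii)$ of Lemma \ref{net}, this label has at least one unit coordinate. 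When the child is a net I arrange positive $\sigma$-parity at $v$ using the freedom in the sign function recorded after Lemma \ref{net parity}; and should the unit coordinate handed down be the $y$-label rather than the $x$-label, I apply Lemma \ref{net} to the weight-negated, coordinate-swapped piece, noting that a consistent labeling of $\Gamma(-D_1)$ with $x$ and $y$ interchanged is the same datum as one of $\Gamma(D_1)$ and that negating the weights alters neither the bad cycles nor the consistency equations. Matching labels at every articulation point in this order and invoking the gluing lemma at each step assembles a consistent labeling of all of $\Gamma(D)$.
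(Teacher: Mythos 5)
Your overall route is the paper's: reduce to the two labeled families via Lemmas \ref{borderless} and \ref{net}, cut the graph at a separating vertex, and propagate a prescribed label through the cut using the freedom those lemmas allow at an anchor point (the paper likewise strengthens the statement to ``the label at one anchor vertex may be prescribed, and every anchor vertex receives a label with an invertible coordinate,'' and then splits off a maximal net at a single shared vertex). Your observation that a $2$-connected block with cycle rank at least $2$ satisfies conditions $(1)$--$(3)$ of Lemma \ref{net equivalent definition} via an ear argument, and is therefore a net, is a clean reformulation that the paper does not state explicitly; the coordinate-swap/weight-negation trick for feeding a unit $y$-label into Lemma \ref{net} is also fine, since $b_ia_j-b_ja_i=-(a_ib_j-a_jb_i)$.

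The genuine gap is the step you yourself flag as ``the structural heart of the argument'': the assertion that, for an appropriately chosen leaf, severing at the articulation point leaves both factors free of bad cycles. This is false in general, not merely unproved. Take $\Gamma$ with vertices $a,b,u,w,z,p$ and edges $au$, $uw$, $wb$, $ab$, $az$, $zb$, $wp$. Every simple cycle of $\Gamma$ either has only one vertex of degree $2$ or has its two degree-$2$ vertices ($u$ and $z$) non-adjacent, so $\Gamma$ contains no bad cycle; but its unique cut vertex is $w$, and the only nontrivial block is the theta graph $N$ on $\{a,b,u,w,z\}$, in which $\deg_N(w)=2$, so the cycle $a\to u\to w\to b\to a$ becomes a bad cycle of $N$ the moment the pendant edge $wp$ is removed. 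Hence there is no choice of leaf for which both pieces satisfy the hypotheses of the inductive statement or of Lemma \ref{net}, and your induction cannot be run on this graph. Closing the gap requires something beyond choosing the leaf well --- e.g.\ a version of Lemma \ref{net} that accepts an admissible set inherited from the ambient graph (so that $w$, being an $\mathcal A$-point of $\Gamma$, may carry a prescribed label with the appropriate invertibility even though it has degree $2$ inside $N$) --- and no such statement is available in the paper as written. A second, smaller slip: a leaf block that is a net together with its attached pendant trees is neither borderless nor a net, so the factor you propose to hand to Lemma \ref{borderless} or Lemma \ref{net} need not be of either type; you would have to peel off the trees separately with Lemma \ref{labeling-a-tree}.
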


\begin{proof}
 Let $\mathcal{A}$ be an anchor for the graph $\Gamma(D)$. Let $v_s \in \mathcal{A}$ be a fixed vertex. We show that for any $(\gamma, \delta) \in R$ with $([\gamma], [\delta]) \ne ([0],[0])$, there exists a consistent labeling $v_k \mapsto (\alpha_k, \beta_k)$ such that $(\alpha_s, \beta_s) = (\gamma, \delta)$, and for each $v_i \in \mathcal{A}$, we have $([\alpha_i], [\beta_i]) \ne ([0],[0])$.
 
 If $\Gamma(D)$ is a borderless graph, then the theorem follows by Lemma \ref{borderless}. If $\Gamma(D)$ is a net, then the theorem follows by Lemma \ref{net}. Thus, we can assume that $\Gamma(D)$ is a connected graph which properly contains a net $\Gamma'$ with $\eta(\Gamma') > 1$. We now proceed by induction on the number of edges in $\Gamma(D)$. Let us denote it by $t$. 
Observe that all connected graphs with $t \leq 7$ are either borderless graphs or nets, or have a bad cycle. 
 
We thus assume that $t \geq 8$ and the theorem holds for all the functions $D': A' \to R \cup \{\epsilon\}$, for which $\Gamma(D')$ does not contain a bad cycle and the number of edges in $\Gamma(D')$ is less than $t$. Let $\Gamma_1$ be a net contained in $\Gamma(D)$ such that $\Gamma_1$ is not properly contained in any other net of $\Gamma(D)$. Since $\Gamma_1 \subsetneq \Gamma(D)$, there exists a vertex $v_i$ of $\Gamma_1$ which is adjacent to some vertex that is not contained in $\Gamma_1$. We claim that if $v_j$ is a vertex of $\Gamma(D)$ that is connected to $v_i$ through a simple path lying completely outside the net $\Gamma_1$, then $v_j$ cannot be connected to any other vertex of the net $\Gamma_1$ through a simple path lying completely outside the net $\Gamma_1$. Let $v_i$ and $v_j$ be connected through a simple path $v_i \to u_1 \to u_2 \to \dots  \to u_{r_1} \to v_j$ lying outside the net $\Gamma_1$ and $v_\ell$ be some other vertex of $\Gamma_1$, that is connected to $v_j$ through a simple path $v_j \to u'_1 \to u'_2 \to \dots \to u'_{r_2} \to v_\ell$ lying completely outside $\Gamma_1$. Thus, $v_i \to u_1 \to \dots \to u_{r_1} \to v_j \to u'_1 \to \dots \to u'_{r_2} \to v_\ell$ is a segment whose edges lie outside the net $\Gamma_1$, and it is connecting two vertices $v_i$ and $v_\ell$ of $\Gamma_1$. This contradicts our assumption that $\Gamma_1$ is not properly contained in any other net of $\Gamma(D)$ and hence establishes the claim.

Thus, there exist two subgraphs $\Gamma_2$ and $\Gamma_3$ of $\Gamma(D)$, that have a unique vertex, say $v_i$, in common and $\Gamma_1 \subseteq \Gamma_2$. We first assume that $v_s$ is a vertex of $\Gamma_2$. By using induction, we label all vertices of $\Gamma_2$ by assigning $x_s = \gamma$ and $y_s = \delta$ in such a way that either $[\gamma] \ne [0]$ or $[\delta] \ne [0]$. In this process, let the solution pair assigned to $v_i$ be $(\alpha_i, \beta_i)$. We fix this solution pair for $v_i$ and apply induction on $\Gamma_3$ to label all its vertices. This proves the theorem in this case. The case when $v_s$ is a vertex of $\Gamma_3$ follows similarly.  
\end{proof}

The following theorem does not require the assumption $|R/\mathfrak m| > 2$. However, it puts more restrictions on $\Gamma(D)$ in order to provide a consistent labeling on it.

\begin{theorem}\label{Arbitrary local rings}
Let $R$ be a local ring and $D: A \to R \cup \{\epsilon\}$ be a function such that the graph $\Gamma(D)$ is not a triangle and does not contain a bad cycle. Let $\mathcal{A}$ be an anchor for $\Gamma(D)$. Then $\Gamma(D)$ admits a consistent labeling, provided that the following conditions are held simultaneously.
\begin{enumerate}
\item[(i)] All the cycles in $\Gamma(D)$ share a common vertex $v_i$.
\item[(ii)] Any net in $\Gamma(D)$ can be constructed iteratively by gluing a segment containing a non $\mathcal{A}$-point of $\Gamma(D)$ adjacent to $v_i$.
\end{enumerate}
\end{theorem}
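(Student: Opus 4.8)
The plan is to run the same inductive scheme as in the proof of Theorem~\ref{without bad cycle}, but to replace every use of Lemma~\ref{labeling-a-cycle} (which needs three residue classes) by Corollary~\ref{labeling-a-cycle-p=2}, which holds over an arbitrary local ring. The two hypotheses are precisely what legitimizes this replacement. Recall from the proof of Corollary~\ref{labeling-a-cycle-p=2} that a cycle closes over an arbitrary $R$ as soon as one of its vertices carries a label with a vanishing coordinate (there $\beta_1 = 0$), because then the obstruction $\alpha_{n-1}\beta_1\alpha_1^{-1} - \beta_{n-1}$ collapses to the invertible element $-\beta_{n-1}$. Conditions $(i)$ and $(ii)$ are designed to make such a vanishing coordinate available at every cycle that must be closed.

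First I would dispose of the acyclic part. By $(i)$, deleting $v_i$ destroys every cycle, so $\Gamma(D)$ is its cyclic core together with finitely many trees hanging off it. By Lemma~\ref{labeling-a-tree} and the freedom recorded in Remark~\ref{freedom-in-labeling-a-tree}, any consistent labeling of the core extends across each such tree, and the gluing lemma preceding Lemma~\ref{borderless} assembles the pieces. Thus it suffices to label the cyclic core, in which every edge lies on a cycle through $v_i$.

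Next I would induct on the number of edges. The base cases are a single cycle, handled by Corollary~\ref{labeling-a-cycle-p=2}, and a tree, handled by Lemma~\ref{labeling-a-tree}. For the inductive step I would invoke $(ii)$: the last segment $\Gamma_1 : u_1 \to \cdots \to u_m$ used in the iterative construction of a net contains a non-$\mathcal A$-point adjacent to $v_i$. Since the interior vertices of $\Gamma_1$ have degree $2$ while $v_i$ has degree greater than $2$ at this stage, this forces $v_i$ to be one of the two endpoints, say $u_1$, with the adjacent interior vertex $u_2$ being the non-$\mathcal A$-point. Deleting the edges of $\Gamma_1$ yields a smaller graph $\Gamma'$ still satisfying $(i)$ and $(ii)$; I would label $\Gamma'$ by induction, thereby fixing the labels of the two gluing vertices (one of which is $v_i$). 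I would then label the interior of $\Gamma_1$ by reusing Steps~$1$, $2$, and $4$ of Lemma~\ref{labeling-a-cycle} exactly as in Corollary~\ref{labeling-a-cycle-p=2}, assigning $u_2$ the vanishing $y$-coordinate. Since $u_2$ plays the role of the vertex with $\beta_1 = 0$, the newly formed cycle through $v_i$ closes automatically over $R$, the gluing lemma merges the two labelings, and the invertibility of a coordinate at each $\mathcal A$-point propagates just as in Lemma~\ref{net}.

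The main obstacle is the bookkeeping that, in Lemma~\ref{net}, was carried out with the help of the auxiliary unit $\gamma$ satisfying $\gamma - a^{-1}b \in R^{\times}$ — a device that is unavailable when $|R/\mathfrak m| = 2$. Here that role must be played entirely by the vanishing coordinate supplied by $(ii)$, so the delicate point is to check, at every closing step, that the non-$\mathcal A$-point guaranteed by $(ii)$ really sits adjacent to the already-labeled hub $v_i$ and can be given a zero coordinate consistently with the labels inherited from $\Gamma'$, all while each $\mathcal A$-point retains an invertible coordinate. Once this compatibility is verified, the degenerate closing case of Corollary~\ref{labeling-a-cycle-p=2} applies verbatim and the induction goes through.
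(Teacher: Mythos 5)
Your overall architecture matches the paper's: strip off the trees, use Corollary \ref{labeling-a-cycle-p=2} in place of Lemma \ref{labeling-a-cycle}, and let conditions $(i)$ and $(ii)$ supply the vanishing coordinate that closes each cycle over an arbitrary local ring. Your reading of condition $(ii)$ is also correct: a non $\mathcal A$-point has degree $2$ with both of its edges inside the glued segment, so its adjacency to $v_i$ does force $v_i$ to be an endpoint of that segment.

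The gap is in where you place the vanishing coordinate. You let the inductive call on $\Gamma'$ assign $v_i$ an arbitrary label and then try to give the adjacent interior vertex $u_2$ a zero $y$-coordinate. But $u_2$'s label is constrained by the balance equation on the edge $v_iu_2$: if $[\beta_{v_i}] = [0]$ then $[\beta_{u_2}] = [d_{v_i,u_2}\,\alpha_{v_i}^{-1}]$ is forced and need not vanish; and if $[\beta_{v_i}] \neq [0]$, forcing $\beta_{u_2}=0$ requires $\alpha_{u_2} = -d_{v_i,u_2}\beta_{v_i}^{-1}$, which lies in $\mathfrak m$ whenever $d_{v_i,u_2}$ does, leaving $u_2$ with both coordinates non-invertible and making the next equation along the segment unsolvable in general. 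You flag this compatibility as ``the delicate point,'' but it is not something that can be verified after the fact --- it can genuinely fail. The paper's resolution is to reverse the order: label $v_i \mapsto (\alpha_i, 0)$ once and for all \emph{before} labeling anything else, so the vanishing $y$-coordinate sits at the hub itself on every cycle; the non $\mathcal A$-point guaranteed by $(ii)$ is then not the carrier of the zero but the \emph{closing} vertex --- the last vertex of each newly formed cycle to be labeled --- and the closing obstruction $\beta_{u_3} - \alpha_{u_3}\beta_{v_i}\alpha_{v_i}^{-1}$ collapses to $\beta_{u_3} \in R^{\times}$ precisely because $\beta_{v_i}=0$. With that reordering, your edge-induction (or the paper's direct decomposition into the cycles meeting other cycles only at $v_i$ and the maximal nets built from the remaining cycles) goes through.
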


\begin{proof}
If $\Gamma(D)$ contains no cycle, then $\Gamma(D)$ is a tree and hence from Lemma \ref{labeling-a-tree}, $\Gamma(D)$ admits a consistent labeling. Thus, we assume that $\Gamma(D)$ contains cycles. Let $v_i$ be the common vertex of these cycles, as in the condition $(i)$. We label this vertex $v_i$ by $(\alpha_i, 0)$, where $\alpha_i$ is an arbitrary invertible element of $R$. Now, let $\mathfrak{C}$ be the collection of all cycles in $\Gamma(D)$ that do not share a vertex other than $v_i$ with any other cycle. We label all vertices of such cycles, following the labeling scheme of Corollary \ref{labeling-a-cycle-p=2}. It is evident from the definition of $\mathfrak{C}$ that if a cycle $C$ is in $\mathfrak{C}$, then any vertex $v_j \ne v_i$ of $C$ has either degree equal to $2$, or a tree $T_j$ is glued to $v_j$. The tree $T_j$, if it exists, consists of all vertices adjacent to $v_j$ in $\Gamma(D)$, except for the two adjacent vertices of $v_j$ in $C$. The vertices of all such trees can be labeled using the labeling scheme of Lemma \ref{labeling-a-tree}.

Now, let $\mathfrak{C'}$ be the collection of all cycles in $\Gamma(D)$ that share a segment with some other cycle in $\Gamma(D)$. Let $\Gamma_1$ be a net that can be obtained as a union of cycles from $\mathfrak{C'}$ and is not contained in any other net of $\Gamma(D)$. We wish to label the vertices of $\Gamma_1$ following the labeling scheme of Lemma \ref{net}. By using our hypothesis, we note that at each step of the iterative construction of the net $\Gamma_1$, the segment to be glued, say $\Gamma_r$, has a non $\mathcal{A}$-point, say $v_r$, adjacent to $v_i$. Hence, at each step, all the vertices of $\Gamma_r$ can be labeled as per the labeling scheme of Lemma \ref{net} by incorporating the labeling scheme of Corollary \ref{labeling-a-cycle-p=2} to label $v_r$. This enables us to consistently label all such nets that contain the vertex $v_i$ and as a consequence, we label all cycles in the collection $\mathfrak{C'}$.

Finally, all trees glued to $v_i$ or to any vertex of a net in $\Gamma(D)$, can be labeled according to the labeling scheme of Lemma \ref{labeling-a-tree}. This exhausts all vertices of $\Gamma(D)$ and thus we obtain a consistent labeling on $\Gamma(D)$.
\end{proof}

\section{Commutators and Commutator Subgroup of Nilpotent Groups of Class 2}\label{section for results on p-groups}
 
Let $G$ be a finite nilpotent $p$-group of class $2$. Let $Z(G)$ be its center and $G'$ be its derived subgroup. 
Let $g_1, g_2, \dots, g_m \in G$ be such that $B_G := \{g_i Z(G) : 1 \leq i \leq m\}$ is a generating set of the factor group $G/Z(G)$. Let $g, h \in G$ be such that $g = \prod_{i=1}^{m} g_i^{\alpha_i}z_1$ and $h = \prod_{i=1}^{m} g_i^{\beta_i}z_2$, where $z_1, z_2 \in Z(G)$ and $\alpha_i, \beta_i \in \mathbb{Z}$ for $1 \leq i \leq m$. Then

$$[g, h] = \left[\prod_{i=1}^{m} g_i^{\alpha_i}z_1, \prod_{i=1}^{m} g_i^{\beta_i}z_2\right] = \left[\prod_{i=1}^{m} g_i^{\alpha_i}, \prod_{i=1}^{m} g_i^{\beta_i}\right]= \prod_{1 \leq i<j \leq m} \left[g_i, g_j\right]^{\alpha_i \beta_j - \alpha_j \beta_i}.$$

Hence $G'$ is generated by $\{\left[g_i,g_j\right] : 1 \leq i < j \leq m\}$. Let $g \in G'$. Then there exist $d_{i,j} \in \mathbb{Z}$ such that
$$\prod_{1 \leq i<j \leq m} [g_i, g_j]^{d_{i,j}}.$$ Note that the choice of integers $d_{i,j}$'s is not unique. Let $$\mathcal{I} := \{i : d_{i,j}\ne 0 \text{ for some $j$}\} \cup \{j : d_{i,j}\ne 0 \text{ for some $i$}\}.$$ Let $|\mathcal{I}| = n$. We permute the indices $\{1,2, \cdots, m\}$ so that $\mathcal{I} = \{1, 2, \cdots , n\}$. Recall the notation: 
$$A(n) := \{(i,j) : 1 \leq i < j \leq n\},$$ and define the function $D : A(n) \to \mathbb{Z}_p \cup \{\varepsilon\}$ as follows: 

$$D(i,j) = \begin{cases}
  \varepsilon, ~~~~\quad \text{ if } [g_i, g_j] = 1,\\
  d_{i,j}, \quad  \text{if } [g_i, g_j] \ne 1.
\end{cases}$$

The codomain of this function is $\mathbb Z_p \cup \{\varepsilon\}$, where $\mathbb Z_p$ is the ring of $p$-adic integers. Here, we are regarding integers $d_{i,j}$ as $p$-adic integers. We wish to solve balance equations corresponding to $D$ over the local ring $\mathbb Z_p$. The solutions \emph{modulo} $\exp(G)$ of these equations can be used to write $g$ as a commutator. Note that the function $D$ depends on the generating set $B_G$, the element $g$, the choice of $d_{i,j}$, and the permutation that sorts out elements of $\mathcal I$ as first $n$ indices. We call such a function a \emph{presentation} of $g$. 

For a presentation $D$ of $g$, a weighted graph $\Gamma(D)$ and a system of balance equations can be attributed as per the discussion in $\S$ \ref{Section on balance equations}. Further, it is evident that $g$ is a commutator in $G$ if and only if there exists a presentation $D$ for which the graph $\Gamma(D)$ has a consistent labeling. Thus, the following two theorems follow directly from Lemma \ref{consistent labeling for a triangle}, Theorem \ref{without bad cycle}, and Theorem \ref{consistent labeling doesn't exist}.

\begin{theorem}\label{without bad cycle for p-groups elementwise}
Let $p \ne 2$ and $G$ be a $p$-group of nilpotency class $2$. Let $g \in G'$ be such that the graph $\Gamma(D)$ does not contain bad cycles for some presentation $D$ of $g$. Then $g$ is a commutator in $G$.
\end{theorem}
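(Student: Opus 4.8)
The plan is to convert the purely combinatorial existence of a consistent labeling into an explicit pair of group elements whose commutator is $g$. The bridge is the biadditivity recorded just before the theorem: since $G$ has class $2$, for $x = \prod_i g_i^{a_i}$ and $y = \prod_i g_i^{b_i}$ one has $[x,y] = \prod_{i<j}[g_i,g_j]^{a_ib_j - a_jb_i}$. As $g = \prod_{i<j}[g_i,g_j]^{d_{i,j}}$, it follows that $g$ is a commutator in $G$ as soon as we can exhibit integers $a_k, b_k$ with $a_ib_j - a_jb_i \equiv d_{i,j} \pmod{\mathrm{ord}([g_i,g_j])}$ for every edge $(i,j) \in \mathrm{supp}(D)$; the factors indexed by non-edges are trivial on both sides since there $[g_i,g_j]=1$. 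Such a system of congruences is precisely a consistent labeling of $\Gamma(D)$ read modulo the commutator orders, so it is enough to produce a consistent labeling of $\Gamma(D)$ over $\mathbb{Z}_p$ and then reduce it.

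First I would record that $R = \mathbb{Z}_p$ is a local ring whose maximal ideal $(p)$ is principal and whose residue field $\mathbb{F}_p$ has $p \geq 3$ elements, using $p \neq 2$; these are exactly the hypotheses required by the labeling results. Since disjoint components of $\Gamma(D)$ contribute independent blocks of balance equations, I would label each connected component of $\Gamma(D)$ separately and take the union. For a component that is a triangle I would invoke Lemma \ref{consistent labeling for a triangle} (this is where principality of $(p)$ is used); every other component is, by hypothesis, free of bad cycles and not a triangle, so Theorem \ref{without bad cycle} applies. Assembling these components yields a consistent labeling $v_k \mapsto (\alpha_k, \beta_k) \in \mathbb{Z}_p \times \mathbb{Z}_p$ with $\alpha_i\beta_j - \alpha_j\beta_i = d_{i,j}$ in $\mathbb{Z}_p$ for every $(i,j) \in \mathrm{supp}(D)$.

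Finally I would descend to the group. Each nontrivial $[g_i,g_j]$ has $p$-power order; let $p^e$ be a common multiple of the finitely many orders occurring. Choosing integers $a_k \equiv \alpha_k$ and $b_k \equiv \beta_k \pmod{p^e}$, the exact $p$-adic identity $\alpha_i\beta_j - \alpha_j\beta_i = d_{i,j}$ reduces to the integer congruence $a_ib_j - a_jb_i \equiv d_{i,j} \pmod{p^e}$, hence modulo $\mathrm{ord}([g_i,g_j])$ for each edge. Setting $x = \prod_i g_i^{a_i}$ and $y = \prod_i g_i^{b_i}$, the biadditivity identity then gives $[x,y] = \prod_{i<j}[g_i,g_j]^{a_ib_j-a_jb_i} = \prod_{i<j}[g_i,g_j]^{d_{i,j}} = g$, so $g$ is a commutator.

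The substantive analytic content is entirely absorbed into Theorem \ref{without bad cycle} and Lemma \ref{consistent labeling for a triangle}, so the genuine work here is organizational rather than deep: one must split off the triangle components so that the non-triangle hypothesis of Theorem \ref{without bad cycle} is honored, and one must check that the \emph{exact} solution over $\mathbb{Z}_p$ survives reduction modulo the commutator orders. This last point is the real crux, and it is precisely why working over $\mathbb{Z}_p$ pays off: the local ring provides enough invertible elements and residue classes to run the inductive labeling, after which a single reduction modulo $p^e$ transfers the solution cleanly into $G$.
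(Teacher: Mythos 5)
Your proposal is correct and follows essentially the same route as the paper, which derives this theorem directly from Lemma \ref{consistent labeling for a triangle} and Theorem \ref{without bad cycle} via the equivalence between consistent labelings of $\Gamma(D)$ over $\mathbb{Z}_p$ and expressions of $g$ as a commutator; you have merely made explicit the reduction modulo the orders of the $[g_i,g_j]$ and the splitting-off of triangle components, both of which the paper leaves implicit.
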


\begin{theorem}\label{Not in the image of commutator map on p-groups}
Let $G$ be a $p$-group of nilpotency class $2$. Let $g \in G'$ be such that the graph $\Gamma(D)$ contains a bad cycle with unfavorable proximity for each choice of $D$. Then $g$ is not a commutator in $G$.
\end{theorem}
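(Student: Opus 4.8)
The plan is to exploit the equivalence recorded just before the statement: that $g$ is a commutator in $G$ if and only if there exists a presentation $D$ of $g$ for which $\Gamma(D)$ admits a consistent labeling over the local ring $\mathbb{Z}_p$. The theorem is then nothing more than the contrapositive of one direction of this equivalence, with the non-existence of consistent labelings supplied by Theorem \ref{consistent labeling doesn't exist}. Concretely, I would show that under the hypothesis \emph{every} presentation of $g$ fails to admit a consistent labeling, and conclude that $g$ cannot be a commutator.

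First I would fix an arbitrary presentation $D$ of $g$. By hypothesis, $\Gamma(D)$ contains a bad cycle with unfavorable proximity. Since $\mathbb{Z}_p$ is a local ring with maximal ideal $p\mathbb{Z}_p$, Theorem \ref{consistent labeling doesn't exist} applies verbatim with $R = \mathbb{Z}_p$ and guarantees that $\Gamma(D)$ does \emph{not} admit a consistent labeling. As $D$ was an arbitrary presentation, no presentation of $g$ yields a graph with a consistent labeling.

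It remains to translate this into the assertion that $g$ is not a commutator, for which I would use the elementary direction of the equivalence. If $g = [a,b]$, then writing $a = \prod_i g_i^{\alpha_i} z_1$ and $b = \prod_i g_i^{\beta_i} z_2$ with $\alpha_i, \beta_i \in \mathbb{Z}$ gives $g = \prod_{i<j}[g_i,g_j]^{\alpha_i\beta_j - \alpha_j\beta_i}$; setting $d_{i,j} = \alpha_i\beta_j - \alpha_j\beta_i$ on the support produces a presentation $D$ of $g$ together with the integer (hence $p$-adic) consistent labeling $v_k \mapsto (\alpha_k, \beta_k)$. Thus an honest commutator always supplies at least one presentation with a consistent labeling, and the negation of this statement is precisely what the second paragraph established.

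I do not expect a genuine obstacle here; the proof is essentially immediate once the framework is in place, and all the real content resides in Theorem \ref{consistent labeling doesn't exist}, which has already been proved. The only points needing care are bookkeeping ones: matching the universal quantifier of the hypothesis (``for each choice of $D$'') against the existential quantifier of the equivalence (``there exists a presentation''), and observing that the direction of the equivalence used here is the straightforward one, in which an actual commutator furnishes actual integer labels. In particular, no appeal to reduction modulo $\exp(G)$ is required for this direction, so the subtlety of passing from a $\mathbb{Z}_p$-labeling back to group elements, which is needed in Theorem \ref{without bad cycle for p-groups elementwise}, never arises.
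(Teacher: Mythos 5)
Your proposal is correct and matches the paper's own argument: the paper likewise derives this theorem directly from Theorem \ref{consistent labeling doesn't exist} applied over $R=\mathbb{Z}_p$, combined with the (stated as evident) equivalence that $g$ is a commutator if and only if some presentation $D$ of $g$ yields a graph $\Gamma(D)$ with a consistent labeling. Your extra care in spelling out the easy direction of that equivalence — that an actual commutator $[a,b]$ furnishes integer labels $(\alpha_k,\beta_k)$, so no passage modulo $\exp(G)$ is needed — is a correct and slightly more explicit rendering of what the paper leaves implicit.
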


The following theorem covers a smaller family of groups when compared with Theorem \ref{without bad cycle for p-groups elementwise} but also holds when $p = 2$.

\begin{theorem}\label{Arbitrary p-groups elementwise}
Let $G$ be a $p$-group of nilpotency class $2$. Let $g \in G'$ and $D$ be a presentation of $G$ such that the graph $\Gamma(D)$ does not contain bad cycles. Let $\mathcal{A}$ be an anchor for $\Gamma(D)$. Then $\Gamma(D)$ admits a consistent labeling, provided the following conditions hold simultaneously.
\begin{enumerate}
\item[(i)] All cycles in $\Gamma(D)$ share a common vertex $v_i$.
\item[(ii)] Any net in $\Gamma(D)$ can be constructed iteratively by gluing a segment containing a non $\mathcal{A}$-point of $\Gamma(D)$ adjacent to $v_i$. 
\end{enumerate}

\begin{proof}
 It follows directly from Lemma \ref{consistent labeling for a triangle} and Theorem \ref{Arbitrary local rings}.   
\end{proof}
\end{theorem}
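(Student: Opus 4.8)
The plan is to reduce the statement to the two labeling results already established for local rings, once the relevant ring has been identified. The presentation $D$ takes values in $\mathbb{Z}_p \cup \{\varepsilon\}$, and the underlying ring $R = \mathbb{Z}_p$ of $p$-adic integers is local with maximal ideal $\mathfrak{m} = p\mathbb{Z}_p$. The structural feature I would exploit is that $\mathfrak{m}$ is a \emph{principal} ideal, generated by $p$; this is precisely the hypothesis demanded by Lemma \ref{consistent labeling for a triangle}. So the entire argument amounts to matching the hypotheses in the theorem to those of the two cited results, after a single case split on the shape of $\Gamma(D)$.

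First I would dispose of the degenerate case. If $\Gamma(D)$ is a triangle, then since $\mathfrak{m} = p\mathbb{Z}_p$ is principal, Lemma \ref{consistent labeling for a triangle} applies verbatim and produces a consistent labeling, with nothing further to check. In the complementary case, $\Gamma(D)$ is not a triangle. By hypothesis it contains no bad cycle, the anchor $\mathcal{A}$ is given, and the two standing assumptions (i) and (ii) guarantee that all cycles pass through a common vertex $v_i$ and that every net inside $\Gamma(D)$ can be built by gluing segments each carrying a non $\mathcal{A}$-point adjacent to $v_i$. These are exactly the hypotheses of Theorem \ref{Arbitrary local rings} for the local ring $R = \mathbb{Z}_p$, so I would invoke it directly to obtain the desired labeling. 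Combining the two cases completes the argument.

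The point worth emphasizing, and the reason conditions (i) and (ii) appear at all, is that one cannot simply appeal to Theorem \ref{without bad cycle}. That theorem requires the residue field to have at least three elements, that is $|R/\mathfrak{m}| > 2$; but for $R = \mathbb{Z}_p$ the residue field is $\mathbb{F}_p$, which collapses to only two elements when $p = 2$. The extra combinatorial hypotheses (i) and (ii) are exactly what allows Theorem \ref{Arbitrary local rings} to circumvent the three-residue-class restriction, and thereby to cover the $p = 2$ case that Theorem \ref{without bad cycle for p-groups elementwise} must exclude. This is why the present theorem, unlike its odd-prime counterpart, is stated for an \emph{arbitrary} $p$-group at the cost of the two additional assumptions.

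Accordingly, I expect no genuine obstacle in the proof: the only substantive verifications are that $\mathbb{Z}_p$ is a local ring whose maximal ideal is principal (so that the triangle case is covered) and that the hypotheses carry over unchanged to Theorem \ref{Arbitrary local rings} (so that the non-triangle case is covered). Both are immediate, so the proof is essentially a citation of the two earlier results, and the real interest of the statement lies in the $p=2$ application rather than in any new labeling construction.
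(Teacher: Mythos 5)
Your proposal is correct and follows exactly the paper's own (one-line) proof: dispose of the triangle case via Lemma \ref{consistent labeling for a triangle} (using that $\mathfrak m = p\mathbb{Z}_p$ is principal in the local ring $\mathbb{Z}_p$), and otherwise check that hypotheses (i) and (ii) are precisely those of Theorem \ref{Arbitrary local rings}. Your added remarks on why (i) and (ii) are needed to bypass the $|R/\mathfrak m|>2$ restriction at $p=2$ accurately reflect the paper's intent.
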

We remark that Theorem \ref{without bad cycle for p-groups elementwise}, Theorem \ref{Not in the image of commutator map on p-groups} and Theorem \ref{Arbitrary p-groups elementwise} hold for infinite $p$-groups of nilpotency class $2$ as well. However, for the rest of this section, we need to assume that $G$ is a $p$-group for which $B_G$ is finite. We can also assume that $B_G$ is a minimal generating set of $G/Z(G)$. We construct a graph $\Gamma(B_G)$ as follows.

\begin{enumerate}
\item The vertices of $\Gamma(B_G)$ are enumerated by the elements of $B_G$ whose representatives in $G$ do not commute with a representative of some other element of $B_G$. The vertex corresponding to $\overline{g_i}$ is denoted by $v_i$. Thus,
$$
V(B_G) := \{v_i \in B_G : [g_i, g_j] \neq 1 \text{ for some } \overline{g_j} \in B_G\}
$$
is the vertex set of $\Gamma(B_G)$.

\item For $i < j$, the vertices $v_i$ and $v_j$ are connected through an edge $e_{i,j}$ in $\Gamma(B_G)$ if $[g_i,g_j] \neq 1$. The edge set of $\Gamma(B_G)$ is denoted by $E(B_G)$.
\end{enumerate}

The following is a direct consequence of Theorem \ref{without bad cycle for p-groups elementwise}.

\begin{corollary}\label{Without bad cycle for whole group}
Let $p \ne 2$. Let $G$ be a $p$-group of nilpotency class $2$ and $B_G$ be a generating set of $G/Z(G)$. If $\Gamma(B_G)$ does not contain bad cycles, then $K(G) = G'$.  
\end{corollary}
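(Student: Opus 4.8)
The plan is to reduce the global statement $K(G)=G'$ to the element-wise criterion already available in Theorem \ref{without bad cycle for p-groups elementwise}. Since every commutator lies in $G'$, we always have $K(G)\subseteq G'$, so it suffices to prove the reverse inclusion: each $g\in G'$ is a single commutator. Fix such a $g$. By the discussion preceding Theorem \ref{without bad cycle for p-groups elementwise}, it is enough to exhibit \emph{one} presentation $D$ of $g$ for which $\Gamma(D)$ has no bad cycle, and the hypothesis hands us a distinguished candidate graph, namely $\Gamma(B_G)$ itself. So the heart of the matter is to realize $\Gamma(B_G)$ as the graph of some presentation of $g$.

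First I would start from an arbitrary expression $g=\prod_{i<j}[g_i,g_j]^{d_{i,j}}$ and observe that the associated graph $\Gamma(D)$ is always the induced subgraph of $\Gamma(B_G)$ on the set $\mathcal I$ of indices that actually occur. One must be careful here: passing to an induced subgraph can drop the degree of a vertex from $>2$ to exactly $2$ and thereby \emph{create} a bad cycle that was absent in $\Gamma(B_G)$, so it is not enough to use whatever subgraph a given expression produces. The remedy is to enlarge the support back up to the full edge set of $\Gamma(B_G)$. Because $G$ has nilpotency class $2$, each basic commutator $[g_i,g_j]$ is central and of finite $p$-power order $\mathrm{ord}([g_i,g_j])$; hence replacing a vanishing exponent $d_{i,j}=0$ on an edge with $[g_i,g_j]\neq 1$ by the multiple $\mathrm{ord}([g_i,g_j])$ leaves the group element $g$ unchanged while forcing that pair into $\mathrm{supp}(D)$. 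Carrying this out for every such edge, and simultaneously discarding any superfluous terms coming from pairs with $[g_i,g_j]=1$, produces a presentation $D'$ of the same element $g$ with $\mathcal I = V(B_G)$ and $\mathrm{supp}(D')$ equal to the full edge set of $\Gamma(B_G)$, so that $\Gamma(D')=\Gamma(B_G)$ exactly.

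With this presentation in hand the conclusion is immediate: by hypothesis $\Gamma(B_G)=\Gamma(D')$ contains no bad cycle, so Theorem \ref{without bad cycle for p-groups elementwise} applies and gives $g\in K(G)$. Here I would note that the triangle case $\Gamma(B_G)=C_3$ causes no trouble, since the proof of that theorem already absorbs it through Lemma \ref{consistent labeling for a triangle}, using that the maximal ideal $(p)$ of $\mathbb Z_p$ is principal. As $g\in G'$ was arbitrary, we obtain $G'\subseteq K(G)$ and therefore $K(G)=G'$. If $\Gamma(B_G)$ happens to be disconnected one simply runs the labeling procedure of \S\ref{Labeling of Graphs} on each component and assembles the partial labelings into a single pair $(\alpha,\beta)$; then $\prod_i g_i^{\alpha_i}$ and $\prod_i g_i^{\beta_i}$ have $g$ as their commutator, so connectedness is no genuine restriction.

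The main obstacle I anticipate is precisely the subgraph subtlety flagged above: the statement is about the fixed graph $\Gamma(B_G)$, whereas the element-wise theorem speaks about the graph of a presentation, and these two coincide only after the exponents have been padded. The delicate step is to verify that this padding is at once group-theoretically inert (it does not change $g$, since the added powers kill central torsion commutators) and graph-theoretically exact (it reproduces $\Gamma(B_G)$ on the nose, introducing no isolated vertices and omitting no edge). Once that bookkeeping is confirmed, everything else is a direct appeal to the machinery developed in \S\ref{Labeling of Graphs}.
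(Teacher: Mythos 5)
Your proposal is correct, and it takes a more careful route than the paper does. The paper's own proof is a one--liner: since $B_G$ is fixed, $\Gamma(D)$ is a subgraph of $\Gamma(B_G)$ for every $g\in G'$ and every presentation $D$, ``thus the Corollary follows.'' That argument tacitly assumes that a subgraph of a bad-cycle-free graph is itself bad-cycle-free, which is false for the notion of bad cycle in Definition \ref{bad cycle}: passing to an induced subgraph lowers degrees, and a cycle all of whose vertices have degree $>2$ in $\Gamma(B_G)$ (hence not bad there) can acquire exactly two adjacent degree-$2$ vertices in $\Gamma(D)$ and become bad. (Concretely, a $4$-cycle with a pendant attached to each vertex has no bad cycle, while the induced subgraph retaining only two adjacent pendants does.) You flag exactly this subtlety and repair it by padding: on every edge of $\Gamma(B_G)$ with $d_{i,j}=0$ you replace the exponent by $\mathrm{ord}([g_i,g_j])$, which is a finite $p$-power since $G$ is a $p$-group of class $2$ with $[g_i,g_j]$ central, so the element $g$ is unchanged while every vertex of $V(B_G)$ is forced into $\mathcal I$; this yields a presentation $D'$ of $g$ with $\Gamma(D')=\Gamma(B_G)$ on the nose, after which Theorem \ref{without bad cycle for p-groups elementwise} applies verbatim (the triangle and disconnected cases being absorbed as you note). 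Both arguments reduce to the element-wise theorem, but yours supplies the missing bridge between the fixed graph $\Gamma(B_G)$ and the presentation graphs, and in that sense is the version one would actually want in the paper.
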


\begin{proof}
Since the generating set $B_G$ is fixed, the graph $\Gamma(D)$ is a subgraph of $\Gamma(B_G)$ for any $g \in G'$ and its presentation $D$, thus the Corollary follows.
\end{proof}

Similarly, we have the following as a direct consequence of Theorem \ref{Arbitrary p-groups elementwise}.

\begin{corollary}\label{Arbitrary $p$-group}
Let $G$ be a $2$-group of nilpotency class $2$, $B_G$ be a generating set of $G/Z(G)$ such that $\Gamma(B_G)$ does not contain bad cycles. Let $\mathcal{A}$ be an anchor for $\Gamma(B_G)$. Then $K(G) = G'$, provided the following conditions hold simultaneously.
\begin{enumerate}
\item[(i).] All cycles in $\Gamma(B_G)$ share a common vertex $v_i$.
\item[(ii).] Any net in $\Gamma(B_G)$ can be constructed iteratively by gluing a segment containing a non $\mathcal{A}$-point of $\Gamma(D)$ adjacent to $v_i$.
\end{enumerate} 
\end{corollary}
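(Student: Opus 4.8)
The plan is to mirror the proof of Corollary \ref{Without bad cycle for whole group}, reducing the group-theoretic statement to the labeling result of Theorem \ref{Arbitrary p-groups elementwise}. First I would recall that $K(G) = G'$ holds precisely when every $g \in G'$ is a commutator, and that $g$ is a commutator exactly when some presentation $D$ of $g$ yields a graph $\Gamma(D)$ admitting a consistent labeling. So it suffices to fix an arbitrary $g \in G'$, choose a presentation $D$, and produce a consistent labeling of $\Gamma(D)$. The crucial structural observation is that, with $B_G$ fixed, the vertex set of $\Gamma(D)$ is the active index set $\mathcal{I} \subseteq V(B_G)$ and its edges are exactly those edges of $\Gamma(B_G)$ joining two vertices of $\mathcal{I}$; thus $\Gamma(D)$ is a subgraph of $\Gamma(B_G)$. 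The goal then becomes to transport the three hypotheses imposed on $\Gamma(B_G)$ --- absence of bad cycles together with conditions (i) and (ii) --- down to $\Gamma(D)$, after which Theorem \ref{Arbitrary p-groups elementwise} applies (with $\mathcal{A}$ replaced by its restriction to $\Gamma(D)$) to furnish the desired labeling.

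Next I would carry out this transfer. Every cycle of $\Gamma(D)$ is a cycle of $\Gamma(B_G)$, so condition (i) for $\Gamma(B_G)$ forces all cycles of $\Gamma(D)$ to pass through the common vertex $v_i$ --- provided $v_i$ itself survives in $\mathcal{I}$. If $v_i \notin \mathcal{I}$, then $\Gamma(D)$ can contain no cycle at all and is therefore a forest; in this \emph{degenerate} case Lemma \ref{labeling-a-tree} already produces a consistent labeling and $g$ is a commutator, so I would dispose of it first. In the main case $v_i \in \mathcal{I}$, condition (i) descends directly, and condition (ii) is inherited because any net appearing in $\Gamma(D)$ is a sub-net of one in $\Gamma(B_G)$, whose iterative construction by gluing segments carrying a non-$\mathcal{A}$-point adjacent to $v_i$ restricts to $\Gamma(D)$. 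Combining these, the hypotheses of Theorem \ref{Arbitrary p-groups elementwise} hold for $\Gamma(D)$, which therefore admits a consistent labeling; since $g$ was arbitrary, $K(G) = G'$.

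The main obstacle I anticipate is precisely the faithfulness of this descent to the subgraph $\Gamma(D)$. Dropping the inactive vertices of $V(B_G) \setminus \mathcal{I}$ can only lower degrees, and a priori this could both manufacture a bad cycle absent from $\Gamma(B_G)$ and disturb the degree pattern underlying conditions (i)--(ii); so the delicate point is to argue that the hypotheses genuinely survive, or else to exploit the freedom in choosing the $d_{i,j}$ so that the chosen presentation $D$ retains the relevant structure of $\Gamma(B_G)$. Verifying that the restriction of the anchor $\mathcal{A}$ remains an anchor of $\Gamma(D)$, and that the non-$\mathcal{A}$-points adjacent to $v_i$ persist, is where the bulk of the care is needed; once this is in place, the conclusion follows routinely from Theorem \ref{Arbitrary p-groups elementwise}.
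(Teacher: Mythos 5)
Your proposal is correct and follows essentially the same route as the paper: the paper proves this corollary in one line as a direct consequence of Theorem \ref{Arbitrary p-groups elementwise}, relying (as in Corollary \ref{Without bad cycle for whole group}) on the observation that for fixed $B_G$ the graph $\Gamma(D)$ of any presentation of $g\in G'$ is a subgraph of $\Gamma(B_G)$, so the hypotheses descend and the labeling theorem applies. The one delicate point you flag --- that passing to the induced subgraph on $\mathcal I$ lowers degrees and so could in principle create a bad cycle or disturb conditions (i)--(ii) --- is real, but the paper's own proof glosses over it in exactly the same way, so your treatment (including the separate disposal of the case $v_i\notin\mathcal I$ via Lemma \ref{labeling-a-tree}) is, if anything, slightly more careful than the original.
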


\begin{remark}\label{Remark showing p=2 case is same for small groups}
There are graphs that do not contain a bad cycle but fail to satisfy the condition $(i)$ of Corollary \ref{Arbitrary $p$-group}. Note that there is no such connected graph with the number of vertices to be $7$ or less. The graph illustrated in Figure \ref{p=2 case condition (i)} is one such with $8$ vertices. Thus, to check whether $K(G) = G'$ for $2$-groups whose graphs $\Gamma(B_G)$ do not contain a bad cycle, we are required to investigate conditions $(i)$ and $(ii)$ of Corollary \ref{Arbitrary $p$-group} only when $|G/Z(G)| \geq 2^8$. 

Similarly, some graphs do not contain a bad cycle, satisfy condition $(i)$ of \ref{Arbitrary $p$-group}, but fail to satisfy condition $(ii)$ of Corollary \ref{Arbitrary $p$-group}. Note that there is no such connected graph with the number of vertices to be $16$ or less. The graph illustrated in Figure \ref{p=2 case condition (ii)} is one such with $17$ vertices. Thus, to check whether $K(G) = G'$ for $2$-groups whose graph $\Gamma(B_G)$ does not contain a bad cycle and satisfies condition (i), we are required to investigate condition $(ii)$ only when $G/Z(G) \geq 2^{17}$. 

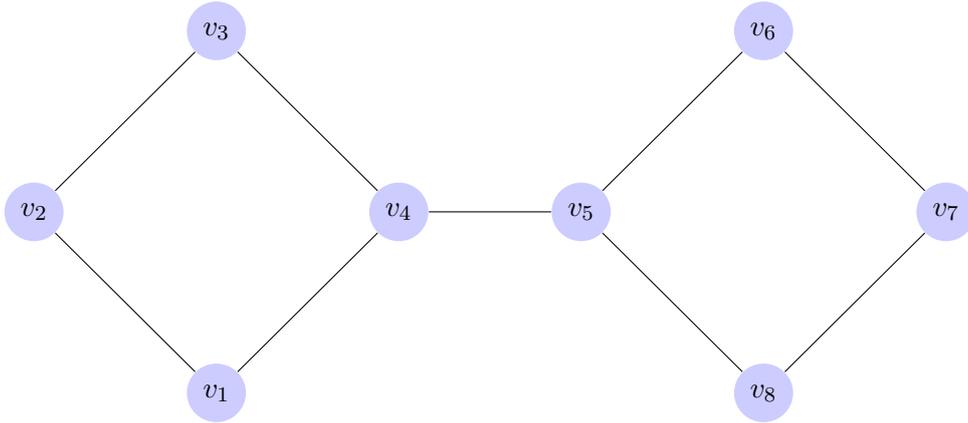
\begin{figure}
\begin{tikzpicture}
  [scale=.6,auto=left,every node/.style={circle,fill=blue!20}]
  \node (n1) at (5,4)   {$v_1$};
  \node (n2) at (1,8)   {$v_2$};
  \node (n3) at (5,12)  {$v_3$};
  \node (n4) at (9,8)   {$v_4$};
  \node (n5) at (13,8)  {$v_5$};
  \node (n6) at (17,12) {$v_6$};
  \node (n7) at (21,8)  {$v_7$};
  \node (n8) at (17,4)  {$v_8$};
  
  \foreach \from/\to in {n1/n2,n2/n3,n3/n4,n4/n1,n4/n5,n5/n6,n6/n7,n7/n8,n8/n5}
      \draw (\from) -- (\to);
\end{tikzpicture}
\caption{A graph that contains no bad cycles but fails to satisfy the condition $(i)$ of Corollary \ref{Arbitrary $p$-group}.
\label{p=2 case condition (i)}
}
\end{figure}

\begin{center}
\begin{figure}[ht]
\begin{tikzpicture}
  [scale=1.0,auto=left,every node/.style={circle,fill=blue!20}]
  \node (n1) at (4,10)      {$v_1$};
  \node (n2) at (7,10)      {$v_2$};
  \node (n3) at (10,10)     {$v_3$};
  \node (n4) at (12,7)      {$v_4$}; 
  \node (n5) at (10,4)      {$v_5$};
  \node (n6) at (7,4)       {$v_6$};
  \node (n7) at (4,4)       {$v_7$};
  \node (n8) at (1,7)       {$v_8$};
  \node (n9) at (1,13)      {$v_9$};
  \node (n10) at (13,13)    {$v_{10}$};
  \node (n11) at (13,1)     {$v_{11}$};
  \node (n12) at (1,1)      {$v_{12}$}; 
  \node (n13) at (8.5,8.5)  {$v_{13}$};
  \node (n14) at (10,7)     {$v_{14}$};
  \node (n15) at (7,7)      {$v_{15}$};
  \node (n16) at (5.5,5.5)  {$v_{16}$};
  \node (n17) at (4,7)      {$v_{17}$};
  
  \foreach \from/\to in {n1/n2,n2/n3,n3/n4,n4/n5,n5/n6,n6/n7,n7/n8,n8/n1,n1/n9,n3/n10,n5/n11,n7/n12,n2/n13,n13/n14,n13/n15,n15/n16,n16/n6,n16/n17}
      \draw (\from) -- (\to);
\end{tikzpicture}
\caption{A graph that contains no bad cycles, satisfies the condition $(i)$ of Corollary \ref{Arbitrary $p$-group} but fails to satisfy the condition $(ii)$ of Corollary \ref{Arbitrary $p$-group}.}\label{p=2 case condition (ii)}
\end{figure}
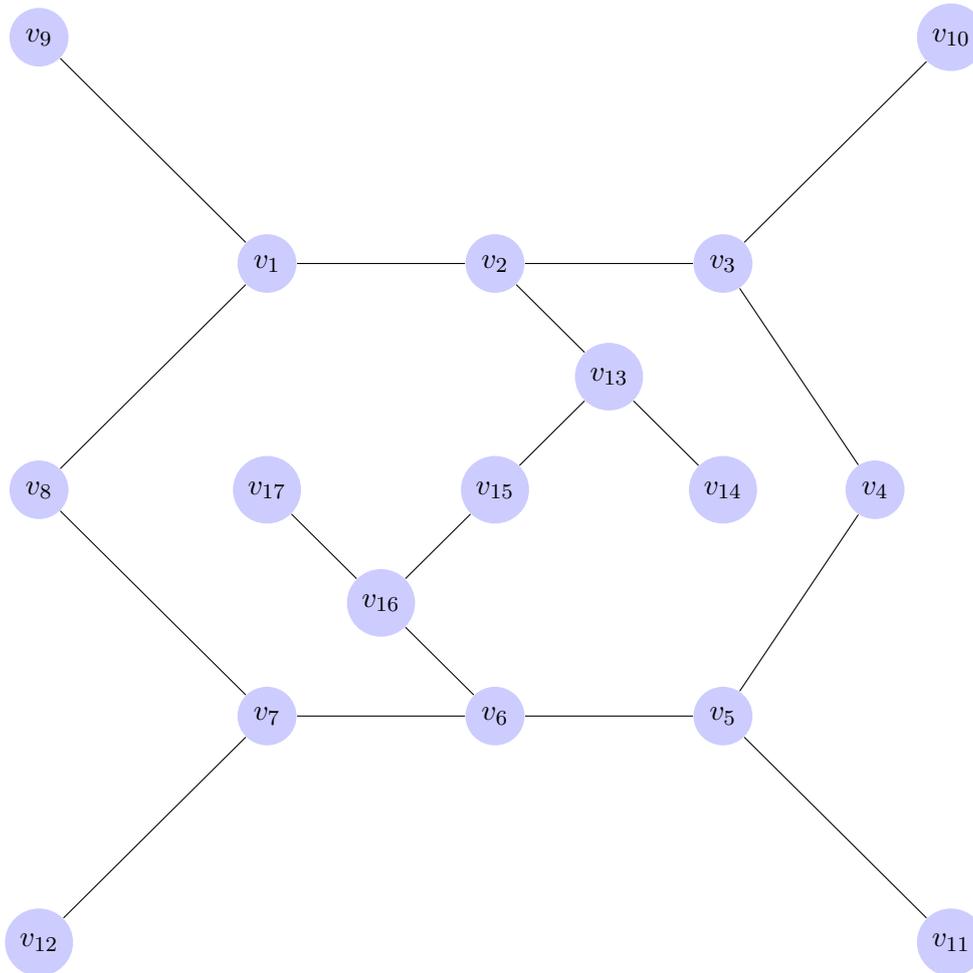
\end{center}
\end{remark}

The following theorem gives a necessary and sufficient condition for $K(G) = G'$ in a specific case.
\begin{theorem}\label{iff condition for p-groups}
Let $p \ne 2$ and $G$ be a $p$-group of nilpotency class $2$. Let $B_G:= \{g_iZ(G): 1\leq i \leq n\}$ be a generating set of $G/Z(G)$ and $\Gamma(B_G)$ be the associated graph. Suppose, the set $E_G:=\{[g_i,g_j]: \text{ $e_{i,j}$ is an edge in $\Gamma(B_G)$} \}$ forms a minimal generating set of $G'$. Then $K(G) = G'$ if and only if $\Gamma(B_G)$ does not contain bad cycles.
\end{theorem}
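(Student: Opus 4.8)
The plan is to treat the two implications separately, with essentially all the work in the ``only if'' direction. The ``if'' direction---that the absence of bad cycles forces $K(G)=G'$---is nothing but Corollary \ref{Without bad cycle for whole group}, which already covers arbitrary generating sets $B_G$ for $p \neq 2$ and uses no minimality hypothesis. So I would dispose of it in one line and concentrate on the converse, proved in contrapositive form: \emph{if $\Gamma(B_G)$ contains a bad cycle, then $K(G) \neq G'$}. The strategy is to exhibit a single element $g \in G'$ that is provably not a commutator via Theorem \ref{Not in the image of commutator map on p-groups}; the whole difficulty is to guarantee that the hypothesis of that theorem---a bad cycle with unfavorable proximity in $\Gamma(D)$ for \emph{every} presentation $D$ of $g$---is met.

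The linchpin is the minimality of $E_G$. Since $G$ has class $2$, the subgroup $G'$ is a finite abelian $p$-group, and $E_G$ being a minimal generating set means precisely that the images of the $[g_i,g_j]$, over the edges $e_{i,j}$, form an $\mathbb{F}_p$-basis of $G'/(G')^p$. Consequently every relation $\prod [g_i,g_j]^{c_{i,j}} = 1$ among these generators has all exponents $c_{i,j}$ divisible by $p$. I would use this to prove the key invariance: if $D$ and $D'$ are two presentations of the same $g$, then $\prod [g_i,g_j]^{d_{i,j}-d'_{i,j}} = 1$, whence $d_{i,j} \equiv d'_{i,j} \pmod p$ on every edge. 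Thus the residues $[d_{i,j}] \in \mathbb{Z}_p/\mathfrak{m}$ depend only on $g$ and not on the chosen presentation.

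With this in hand I would construct $g$ directly from the bad cycle. Fix a bad cycle $C_r : v_{i_1} \to \cdots \to v_{i_r} \to v_{i_1}$ with $\deg_{\Gamma(B_G)}(v_{i_{r-1}}) = \deg_{\Gamma(B_G)}(v_{i_r}) = 2$ and $\deg_{\Gamma(B_G)}(v_{i_k}) > 2$ for $k \leq r-2$; for each such $k$ the degree condition supplies a neighbour $v_{i_{r+k}}$ lying outside $C_r$, giving a proximity in the sense preceding Definition \ref{unfavorable proximity}. I then set
$$ g := [g_{i_{r-1}}, g_{i_r}] \prod_{k=1}^{r-2} [g_{i_k}, g_{i_{r+k}}], $$
i.e.\ the presentation carrying weight $1$ on $e_{i_{r-1},i_r}$ and on each proximity edge $e_{i_k,i_{r+k}}$, weight $0$ on the remaining cycle edges, and $\varepsilon$ elsewhere. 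By the invariance above, in \emph{every} presentation $D'$ of $g$ one has $[d'_{i_{r-1},i_r}] = [d'_{i_k,i_{r+k}}] = [1] \neq [0]$ while the other cycle-edge residues are $[0]$, which is exactly the unfavorable-proximity condition of Definition \ref{unfavorable proximity}. Moreover each endpoint of a nonzero-weight edge has a nonzero incident coefficient, so it lies in the index set $\mathcal{I}'$ of $D'$; hence all vertices of $C_r$ and of the proximity survive in $\Gamma(D')$, the two degree-$2$ vertices keep both their cycle edges and so still have degree $2$, and each $v_{i_k}$ retains a third edge and so still has degree $>2$. Therefore $C_r$ remains a bad cycle with an unfavorable proximity in $\Gamma(D')$.

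Theorem \ref{Not in the image of commutator map on p-groups} then gives $g \notin K(G)$, while $g \in G'$ by construction, so $K(G) \neq G'$, completing the contrapositive. The step I expect to be the main obstacle is establishing the residue-invariance cleanly and then converting it into the combined combinatorial-and-arithmetic claim that the bad cycle persists \emph{as a bad cycle with unfavorable proximity} in each $\Gamma(D')$; in particular one must track membership in $\mathcal{I}'$ to ensure no relevant vertex or edge is dropped, and one must verify that a bad cycle in $\Gamma(B_G)$ always admits genuine outside neighbours---so that a proximity exists at all---rather than only chords back into the cycle. The minimality hypothesis is exactly what removes the freedom in choosing $D$ that would otherwise allow one to escape the unfavorable proximity.
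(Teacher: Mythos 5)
Your proposal is correct and follows essentially the same route as the paper: the same witness $g$ (weight $1$ on the closing edge of the bad cycle and on the proximity edges, $0$ on the remaining cycle edges), the same use of minimality of $E_G$ to force the residues of the exponents mod $p$ to be presentation-independent, and the same appeal to Theorem \ref{Not in the image of commutator map on p-groups}. The only difference is cosmetic --- you package the minimality step via the Frattini quotient $G'/(G')^p$ where the paper derives the same congruences by an explicit element-by-element redundancy computation --- and your side remark that the existence of a proximity (genuine outside neighbours rather than chords back into the cycle) needs verification is a point the paper also leaves implicit.
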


\begin{proof}
If $\Gamma(B_G)$ does not contain a bad cycle, then the result follows by Corollary \ref{Without bad cycle for whole group}. For the converse, let $C_r$ be a bad cycle in $\Gamma(B_G)$, involving $r$ vertices. 
Let $\mathcal{P} \subseteq \Gamma(B_G)$ be a proximity for the cycle $C_r$. We enumerate the vertices of $\mathcal{P}$ by $v_1, v_2, \dots, v_{2r-2}$ such that the vertices $v_1, v_2, \dots, v_{r}$ form the cycle $C_r$ and the vertex $v_{r+\ell}$ is adjacent to the vertex $v_\ell$ in $\mathcal{P}$, where $\ell \in \{1,2,\dots, r-2\}$. For any $1 \leq i <j \leq n$, let $c_{i,j}$ denote the commutator $[g_i, g_j]$ in $G$.
Denote $S := \{(i,r+i) : 1\leq i \leq r-2\} \cup \{(r-1,r)\}$,
and $$g :=\prod_{(i,j) \in S} c_{i,j}.$$
It is evident that $g \in G'$. We show that $g \notin K(G)$. For the above expression of $g$, the presentation function $D:A(n) \to \mathbb Z_p \cup \{\varepsilon\}$ is given by
$$D(i,j) = \begin{cases}
  1, ~~~\quad \text{ if } (i,j) \in S,\\
  \varepsilon, ~\quad  \text{if } [g_i, g_j] =1,\\
  0, \quad  \text{ otherwise }.\\
\end{cases}$$

By definition, the graph $\Gamma(D)$ contains a bad cycle with unfavorable proximity. We claim that the graph of any presentation function of $g$ contains a bad cycle with unfavorable proximity.

Let $$g = \prod_{1 \leq i < j \leq n} c_{i,j}^{d_{i,j}},$$
where $d_{i,j} \in \mathbb Z$, be some other expression of $g$. If possible, let $(s,t) \in S$ be such that
$d_{s,t} \not\equiv 1 \bmod p$. Then
$$c_{s,t}^{1-d_{s,t}} = g^{-1}gc_{s,t}^{1-d_{s,t}} = \left(\prod_{(i,j) \in S} c_{i,j}\right)^{-1}\left(\prod_{1 \leq i < j \leq n} c_{i,j}^{d_{i,j}}\right) c_{s,t}^{1-d_{s,t}}.$$
The exponent of $c_{s,t}$ in the right-hand side of the above equation is $0$ but $1 - d_{s,t}$ is coprime to $p$. Thus, $c_{s,t}$ belongs to the subgroup generated by $E_G\backslash \{c_{s,t}\}$. This contradicts the minimality of the set $E_G$. Thus, $d_{s,t} \equiv 1 \bmod p$ for all $(s,t) \in S$.

Now, let $p \nmid d_{s,t}$ for some $(s,t) \in \{(i,i+1) : 1 \leq i \leq r-2 \} \cup \{(1,r)\} \subseteq E_G \backslash S$. Then $$c_{s,t}^{d_{s,t}} = g^{-1} g c_{s,t}^{d_{s,t}} = \left(\prod_{1 \leq i < j \leq n} c_{i,j}^{d_{i,j}}\right)^{-1} \left(\prod_{(i,j) \in S} c_{i,j}\right)c_{s,t}^{d_{s,t}}.$$ The exponent of $c_{s,t}$ in the right hand side of the above equation is $0$ but $d_{s,t}$ is coprime to $p$ and hence this again contradicts the minimality of the set $E_G$. Thus, $p \mid d_{s,t}$ for all $(s,t) \in \{(i,i+1) : 1 \leq i \leq r-2 \} \cup \{(1,r)\}$. This shows that the expression $$g = \prod_{1 \leq i < j \leq n} c_{i,j}^{d_{i,j}}$$ also forces $\mathcal{P}$ to be an unfavorable proximity. Thus, we conclude that the graph $\Gamma(D)$ will always have an unfavorable proximity $\mathcal{P}$ for any given expression of $g$ in $G'$. Thus, by Theorem \ref{Not in the image of commutator map on p-groups}, $g \in G' \backslash K(G)$.
\end{proof}

We carefully follow the proof of Theorem \ref{iff condition for p-groups} to note that if $p=2$ and $E_G$ forms a minimal generating set of $G'$, then $K(G) \ne G'$ provided that the graph $\Gamma(B_G)$ contains a bad cycle. We do not have a proof of the converse for the $p=2$ case.

\begin{remark}\label{Remark for minimality condition for groups is necessary}
If the set $E_G$ is not a minimal generating set of $G'$, then the group $G$ may still have $K(G) =G'$, even if the graph $\Gamma(B_G)$ contains a bad cycle. One such example is 
\begin{align*}
G : = \langle &g_1, g_2, g_3, g_4 : g_i^p =1, [g_1, g_2] = [g_1, g_3] = [g_1, g_4] = [g_2, g_3], \\
&[g_2, g_4] = [g_3, g_4] = 1, [[g_1, g_2], g_i] = 1, \text{ for each } i \in \{1,2,3,4\} \rangle.
\end{align*}

Here, $Z(G)= G'$ is of order $p$ and $G$ is of order $p^5$. For $B_G := \{\overline{g_1}, \overline{g_2}, \overline{g_3}, \overline{g_4}\}$, the graph $\Gamma(B_G)$ is illustrated in Figure \ref{figure for minimality condition for groups is necessary}. Clearly, it contains a bad cycle. But $G'$ is cyclic of order $p$ and it is generated by $[g_1, g_2]$. Thus, each element $g$ of $G$ has a presentation function $D$ such that the graph $\Gamma(D)$ is a single edge $e_{1,2}$. Hence, $K(G) = G'$ by Theorem \ref{without bad cycle for p-groups elementwise}.
\end{remark}

\begin{center}
\begin{figure}[ht]
\begin{tikzpicture}
  [scale=0.8,auto=left,every node/.style={circle,fill=blue!20}]
  \node (n1) at (9,1)      {$v_1$};
  \node (n2) at (5,5)      {$v_2$};
  \node (n3) at (1,1)     {$v_3$};
  \node (n4) at (9,5)      {$v_4$}; 
  
  \foreach \from/\to in {n1/n2,n2/n3,n1/n3,n1/n4}
      \draw (\from) -- (\to);
\end{tikzpicture}
\caption{Graph for the group in Remark \ref{Remark for minimality condition for groups is necessary}}\label{figure for minimality condition for groups is necessary}
\end{figure}
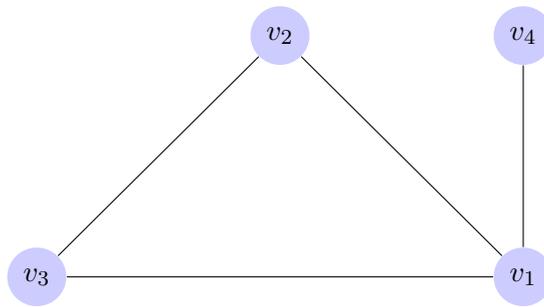
\end{center}

\subsection{Constructing a $p$-group from a graph}\label{Remark from graph to group}
Let $\Gamma:=(V, E)$ be a graph without isolated vertices. Let $V:= \{v_1, v_2, \dots , v_n\}$ and $F_n$ denote the free group generated by the free generators $x_1, x_2, \dots, x_n$.
For each $r \in \mathbb{N}$, we associate to $\Gamma$, the group $G_\Gamma:= F_n/R$, where $R$ is the normal subgroup generated by the following relations:

\begin{enumerate}
\item[$(i)$.] If $v_i$ and $v_j$ are not adjacent in $\Gamma$, then $[x_i, x_j] \in R$.
\item[$(ii)$.] If $v_i$ and $v_j$ are adjacent in $\Gamma$, then $[[x_i, x_j], x_k] \in R$ for every $k \in \{1,2,\cdots,n\}$.
\item[$(iii)$.] ${x_{1}}^{p^r}, {x_{2}}^{p^r} \in R$.
\item[$(iv)$.] ${x_i}^{p} \in R$, for each $i \in \{3, 4, \cdots, n\}$.
\end{enumerate}

We denote the image of $x_i$ in $G_{\Gamma}$ by $g_i$.
The group $G_\Gamma$ has the following properties:
\begin{enumerate}
\item[(a).] The set $B_{G_\Gamma} := \{g_i Z(G) : 1 \leq i \leq n\}$ is a generating set of the factor group $G_\Gamma/Z(G_\Gamma)$, and $\Gamma(B_{G_\Gamma}) = \Gamma$.
\item[(b).] The group $G_\Gamma$ is a nilpotent group of class $2$. This follows from conditions $(i)$ and $(ii)$. Therefore, $[g_{1}, g_{2}]^{p^r} = [g_{1}^{p^r}, g_{2}] =1$. Further, if $(i,j) \ne (1 , 2)$, then $[g_{i}, g_{j}]^{p} =1$.
\item[(c).] $Z(G_\Gamma) = G_\Gamma'$ and it is minimally generated by the set $\{[g_i, g_j] : e_{i,j} \in E\}$.
\item[(d).] $|G_\Gamma'| = |Z(G_\Gamma)| = p^{|E|+r-1}$, $|G_\Gamma/Z(G_\Gamma)| = p^{|V|+2r-2}$, $|G_\Gamma| = p^{|E|+|V|+3r-3}$, size of the minimal generating set of $G_\Gamma$ is $|V|$, size of the minimal generating set of $G_\Gamma'$ is $|E|$, and 
$$\exp(G_\Gamma) = \begin{cases}
  p^r, \quad\quad \text{ if } p \ne 2,\\
  2^{r+1}, ~~\quad  \text{if } p = 2.\\
\end{cases}$$
\end{enumerate}

The above construction of $G_{\Gamma}$ brings out the substance of Theorem \ref{iff condition for p-groups}. If $p \neq 2$, then by Theorem \ref{iff condition for p-groups}, determining whether or not $K(G_\Gamma)$ and $G_\Gamma'$ are equal, is a matter of locating a bad cycle in the graph $\Gamma$.

The following corollary guarantees the existence of a group $G$ with a prescribed order, an admissible exponent, and $K(G) \ne G'$. Since a connected graph $\Gamma = (V,E)$ containing a bad cycle satisfies 
$|V| \geq 4$ and $|E| \geq |V|$, we assume that for the groups $G$ with $K(G) \ne G'$, the associated graph $\Gamma(B_G) = (V,E)$ satisfies $|V| \geq 4$ and $|E| \geq |V|$. 

\begin{corollary}\label{Infinite examples for K(G) not equal to G'}
Let $r, s, t$ be positive integers such that $m := s-r+1$ and $n := t-s-2r+2$ satisfy $4 \leq n \leq m \leq \frac{1}{2}n (n -1)$. Then there exists a $p$-group $G$ of nilpotent class $2$ with the following property:
\begin{enumerate}
\item[(i).] $|G| = p^t$, $|Z(G)| = |G'| = p^s$.
\item[(ii).] ${\rm exp}(G) = \begin{cases}
  p^r, \quad\quad \text{ if } p \ne 2,\\
  2^{r+1}, ~~\quad  \text{if } p = 2.\\
\end{cases}$
\item[(iii).] $K(G) \neq G'$. 
\end{enumerate}
\end{corollary}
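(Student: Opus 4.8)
The plan is to realize $G$ as the group $G_\Gamma$ associated in \S\ref{Remark from graph to group} to a carefully chosen graph $\Gamma = (V,E)$, so that the whole statement reduces to a combinatorial construction. By property (d) of that subsection, $|Z(G_\Gamma)| = |G_\Gamma'| = p^{|E|+r-1}$, $|G_\Gamma| = p^{|E|+|V|+3r-3}$, and $\exp(G_\Gamma)$ equals $p^r$ for $p \neq 2$ and $2^{r+1}$ for $p = 2$. Hence if I arrange $|V| = n = t-s-2r+2$ and $|E| = m = s-r+1$, then $|Z(G_\Gamma)| = |G_\Gamma'| = p^{(s-r+1)+r-1} = p^s$ and $|G_\Gamma| = p^{(s-r+1)+(t-s-2r+2)+3r-3} = p^t$, so conditions (i) and (ii) hold for free. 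Everything therefore hinges on exhibiting, for each admissible pair $(n,m)$, a connected graph on $n$ vertices with $m$ edges that contains a bad cycle, and then reading off (iii).

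First I would construct $\Gamma$. Take vertices $v_1, \dots, v_n$, put the triangle $v_1 v_2 v_3$, and join $v_1$ to each of $v_4, \dots, v_n$. This is a connected graph with exactly $3 + (n-3) = n$ edges in which $\deg(v_1) = n-1 \ge 3$ while $\deg(v_2) = \deg(v_3) = 2$; by Definition \ref{bad cycle} the triangle $v_1 v_2 v_3$ is then a bad cycle, realizing the case $m = n$. To obtain a larger prescribed value of $m$, I would add extra edges \emph{only} among the $n-2$ vertices $\{v_1, v_4, \dots, v_n\}$, inserting them one at a time. These vertices currently carry the $n-3$ star edges and can support up to $\binom{n-2}{2}$ edges in total, so the total edge count sweeps through consecutive integer values as edges are added, all the while leaving $v_2$ and $v_3$ untouched. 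Thus $\deg(v_2) = \deg(v_3) = 2$ persists, the triangle remains a bad cycle, and the graph stays connected at every stage.

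With $\Gamma$ fixed, I would conclude (iii) as follows. By property (c) of \S\ref{Remark from graph to group}, the set $E_{G_\Gamma} = \{[g_i,g_j] : e_{i,j} \in E\}$ is a \emph{minimal} generating set of $G_\Gamma'$, which is exactly the hypothesis needed to invoke Theorem \ref{iff condition for p-groups}. Since $\Gamma$ contains a bad cycle, that theorem gives $K(G_\Gamma) \neq G_\Gamma'$ whenever $p \neq 2$; for $p = 2$ the same conclusion follows from the remark stated right after Theorem \ref{iff condition for p-groups}, which needs only a bad cycle together with the minimality of $E_{G_\Gamma}$. This yields (iii) and finishes the argument.

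The main obstacle is the coverage of the full range of $m$. A bad cycle forces two \emph{adjacent} vertices of degree exactly $2$, and these remain degree $2$ globally, so however $\Gamma$ is chosen its edge count is capped: the two protected vertices cannot be made dense, and only the remaining $n-2$ vertices are free to form a large clique. The delicate part is therefore to verify that the sweeping construction above realizes every admissible value of $m$, and the key device is precisely the one-edge-at-a-time insertion among the non-protected vertices $\{v_1, v_4, \dots, v_n\}$: it lets $m$ assume all consecutive integer values from $m = n$ up to the maximum permitted while keeping $v_2$ and $v_3$ at degree $2$, with the triangle $v_1 v_2 v_3$ alive as a bad cycle throughout. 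It is this elementary realizability statement, rather than any group-theoretic input, that carries the corollary.
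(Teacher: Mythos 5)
Your proposal is essentially the paper's own proof: the paper likewise builds a connected graph on $n$ vertices with $m$ edges containing a bad cycle (a path $v_1 - v_2 - \cdots - v_n$ with the chord $e_{1,3}$ when $m=n$, then ``a sufficient number of edges'' added when $m>n$), feeds it into the construction of \S\ref{Remark from graph to group}, and invokes Theorem \ref{iff condition for p-groups}; your reduction of (i) and (ii) to the parameter bookkeeping of property (d) is exactly the paper's, and your appeal to property (c) for the minimality hypothesis is correct. You are in fact more explicit than the paper about how the extra edges are inserted and about the $p=2$ case.

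The gap is in the phrase ``up to the maximum permitted.'' The maximum your construction permits is not $\frac{1}{2}n(n-1)$: since $v_2$ and $v_3$ must stay at degree $2$ and are adjacent, the edges meeting $\{v_2,v_3\}$ number exactly $3$, and the remaining $n-2$ vertices carry at most $\binom{n-2}{2}$ edges, so $m \le \binom{n-2}{2}+3 = \frac{1}{2}n(n-1)-(2n-6)$. This is not an artifact of your particular graph: \emph{any} graph containing a bad cycle has two adjacent vertices of degree exactly $2$ (the vertices $u_{r-1}, u_r$ of Definition \ref{bad cycle}), so the same cap applies universally, and for every $n\ge 4$ the corollary's range strictly exceeds it (for $n=4$ the cap is $m\le 4$ while the corollary allows $m\le 6$; one checks directly that no connected graph on $4$ vertices with $5$ or $6$ edges has a bad cycle). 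Since the only mechanism available for certifying $K(G)\ne G'$ is a bad cycle via Theorem \ref{iff condition for p-groups}, the values $\binom{n-2}{2}+3 < m \le \frac{1}{2}n(n-1)$ are simply not reachable by this method. To be fair, the paper's own proof hides the identical problem behind ``add a sufficient number of edges \dots\ to obtain a graph that contains a bad cycle,'' and your closing paragraph correctly isolates the obstruction --- but the one-edge-at-a-time insertion does not resolve it; the honest fix is to tighten the hypothesis to $m\le\binom{n-2}{2}+3$ or to supply a genuinely different argument for dense graphs.
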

\begin{proof}
We first construct a connected simple graph $\Gamma = (V, E)$ with $n$ vertices and $m$ edges, that contains a bad cycle. We observe that the condition $n \leq m \leq \frac{1}{2}n (n -1)$ is necessary for this construction, otherwise, either $\Gamma$ won't be connected and simple, or it will be a tree. 

If $n=m$, then we construct $\Gamma$ as follows:
\begin{align*}
V &:= \{v_1, v_2, \cdots, v_n\},\\
E &:= \{e_{i,i+1}: 1\leq i \leq n-1\} \cup \{e_{1,3}\},    
\end{align*}
where $e_{i,j}$ is an edge between $v_i$ and $v_j$.
Since $n \geq 4$, this graph contains a bad cycle involving vertices $v_1, v_2, v_3$ and $v_4$. 
Let us denote this graph by $\Gamma_4$.

If $n < m$, then we add a sufficient number of edges to 
$\Gamma_4$, without adding a vertex, to obtain a graph $\Gamma$ that contains a bad cycle involving the vertices $v_1, v_2, v_3$ and $v_4$.

We then use $\S \ref{Remark from graph to group}$ to associate a group $G_\Gamma$ to the graph $\Gamma$. For $G = G_{\Gamma}$ we have:

\begin{align*}
|G| &= p^{|E|+|V|+3r-3} = p^{m+n+3r-3} = p^t, \\
|G'| &= |Z(G)| = p^{|E|+r-1} = p^{m+r-1} = p^s, \\
{\rm exp}(G) &= \begin{cases}
  p^r, \quad\quad \text{ if } p \ne 2,\\
  2^{r+1}, ~~\quad  \text{if } p = 2.\\
\end{cases}
\end{align*}
Since $\Gamma(B_{G}) = \Gamma$ contains a bad cycle, Theorem \ref{iff condition for p-groups} guarantees that 
$K(G) \ne G'$.
\end{proof}

\section{Surjectivity of bilinear maps and Lie bracket}\label{section on bilinear maps and Lie algebra}

Let $U$ and $W$ be two vector spaces over a field $F$, with countable Hamel bases. Let $B: U \times U \to W$ be an alternating bilinear map over $F$ and $B(U \times U)$ be the image of $B$. We assume that $W$ is equal to the subspace spanned by the set $B(U \times U)$. It is natural to ask when $B(U \times U)$ equals $W$. In this section, we address this question and then deal with the problem of determining the bracket width of Lie algebras. Our results on consistent labeling of graphs proved in \S\ref{Labeling of Graphs} will play a crucial role.

Let $U^\perp := \{v \in U : B(v, v') =0, \text{ for all } \ v' \in U\}$.  We fix a basis $\mathcal B_1:= \{v_1, v_2, \dots \}$ of $U^{\perp}$ and extend it to a basis $\mathcal{B}:= \mathcal B_1 \cup \mathcal B_2$ of $U$, where $\mathcal B_2 = \{u_1, u_2, \dots\}$. Let $v, v' \in U$. We write
$$v = \sum_{i=1}^{r} \alpha_i u_i + \sum_{i=1}^{s} \gamma_i v_i\quad \text{and} \quad v' = \sum_{i=1}^{r} \beta_i u_i + \sum_{i=1}^{s}\delta_i v_i,$$
where $r, s$ are suitable positive integers and $\alpha_i, \beta_i, \gamma_i, \delta_i \in F$ are suitable scalars.

Thus,
\begin{align*}
    B(v,v') &= B\left(\sum_{i=1}^{r} \alpha_i u_i + \sum_{i=1}^{s} \gamma_i v_i, \sum_{i=1}^{r} \beta_i u_i + \sum_{i=1}^{s}\delta_i v_i\right) \\
    &= B\left(\sum_{i=1}^{r} \alpha_i u_i, \sum_{i=1}^{r} \beta_i u_i \right) = \sum_{1 \leq i < j \leq r}(\alpha_i \beta_j - \alpha_j \beta_i) B(u_i, u_j).\\
\end{align*}  
Here, the second equality holds because $v_i \in U^{\perp}$, and the third equality holds because $B$ is an alternating bilinear map. Thus, $$W = {\rm span}(\{B(u_i, u_j) : u_i, u_j \in \mathcal{B}_2, i < j\}).$$ 
We write an arbitrary $w \in W$ as $$w = \sum_{1 \leq i < j \leq r} d_{i,j} B(u_i, u_j); \quad d_{i,j} \in F.$$ 
Note that in general, the set $\{B(u_i, u_j) : u_i, u_j \in \mathcal{B}_2\}$ need not be a basis of $W$. Thus, for any $w \in W$, multiple choices for the scalers $d_{i,j}$ may exist. Let $$\mathcal{I} := \{i : d_{i,j}\ne 0 \text{ for some $j$}\} \cup \{j : d_{i,j}\ne 0 \text{ for some $i$}\}.$$ Let $|\mathcal{I}| = n$. We permute the indices $\{1,2, \cdots, \}$ so that $\mathcal{I} = \{1, 2, \cdots , n\}$. Define the function $D : A(n) \to F \cup \{\varepsilon\}$ as follows: 

$$D(i,j) = \begin{cases}
  \varepsilon, ~~~~~~\quad \text{ if } B(u_i, u_j) = 0,\\
  d_{i,j}, \quad  \text{if } B(u_i, u_j) \neq 0.
\end{cases}$$

Note that the function $D$ depends on the set $\mathcal{B}_2$, the element $w$, the choice of $d_{i,j}$, and the permutation that sorts out elements of $\mathcal I$ as first $n$ indices. We call such a function a \emph{presentation} of $w$. The weighted graph $\Gamma(D)$ corresponds to a system of balance equations and it is clear that $w$ lies in the image of $B$ if and only if there exists a presentation $D$ of $w$ such that the corresponding graph $\Gamma(D)$ has a consistent labeling. The following two theorems follow directly from Lemma \ref{consistent labeling for a triangle}, Theorem \ref{without bad cycle}, and Theorem \ref{consistent labeling doesn't exist}.

\begin{theorem}\label{without bad cycle for bilinear maps}
Let $F \ne \mathbb{F}_2$ be a field and $U, W$ be two vector spaces over $F$ with countable Hamel bases. Let $B: U \times U \to W$ be an alternating bilinear map over $F$ such that ${\rm span}(B(U \times U)) = W$. Let $w \in W$ be such that for some presentation $D$ of $w$ the graph $\Gamma(D)$ does not contain bad cycles. Then $w \in B(U \times U)$.
\end{theorem}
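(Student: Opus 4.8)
The plan is to reduce the statement to the existence of a consistent labeling on $\Gamma(D)$ and then apply the labeling results of \S\ref{Labeling of Graphs} with $R = F$. First I would make explicit the correspondence already indicated in the text, so that it suffices to produce a consistent labeling $v_k \mapsto (\alpha_k, \beta_k)$ of the given graph $\Gamma(D)$. Indeed, from such a labeling one sets $v := \sum_k \alpha_k u_k$ and $v' := \sum_k \beta_k u_k$; the alternating bilinearity of $B$ gives $B(v,v') = \sum_{i<j}(\alpha_i\beta_j - \alpha_j\beta_i)\,B(u_i,u_j)$. On $\mathrm{supp}(D)$ we have $\alpha_i\beta_j - \alpha_j\beta_i = d_{i,j}$, while off the support the corresponding terms vanish because $B(u_i,u_j)=0$ there. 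Hence $B(v,v') = \sum_{i<j} d_{i,j} B(u_i, u_j) = w$, so $w \in B(U \times U)$.

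Next I would regard the field $F$ as a local ring with maximal ideal $\mathfrak m = (0)$, whose residue field is $F$ itself. Since $F \ne \mathbb{F}_2$, we have $|F| \ge 3$, so $F$ has at least three residue classes modulo $\mathfrak m$; this is precisely the hypothesis of Theorem \ref{without bad cycle}. Note that the graph $\Gamma(D)$ is finite, as it involves only the finitely many indices appearing in the chosen expression for $w$, so the countable-Hamel-basis assumption enters only in setting up the basis $\mathcal{B}_2$ and not in the labeling step itself.

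Finally I would split according to whether $\Gamma(D)$ is a triangle. If $\Gamma(D)$ is not a triangle, then since it contains no bad cycle, Theorem \ref{without bad cycle} directly furnishes a consistent labeling. If $\Gamma(D)$ is a triangle, then Theorem \ref{without bad cycle} does not apply, but the zero ideal of the field $F$ is principal, so Lemma \ref{consistent labeling for a triangle} supplies the consistent labeling instead. In either case $\Gamma(D)$ admits a consistent labeling, and by the correspondence above, $w \in B(U \times U)$. The argument is essentially a translation of earlier results; the only genuine points to verify are the residue-class count that forces the exclusion of $\mathbb{F}_2$ and the fact that a field's zero ideal is principal, which together ensure both the triangle and non-triangle cases are covered, so no real obstacle arises beyond organizing these citations correctly.
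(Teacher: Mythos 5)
Your proposal is correct and matches the paper's own argument: the paper derives this theorem directly from the correspondence between presentations with consistent labelings and elements of $B(U\times U)$, together with Theorem \ref{without bad cycle} (viewing $F$ as a local ring with zero maximal ideal and residue field of size at least $3$) and Lemma \ref{consistent labeling for a triangle} for the triangle case. Your write-up merely makes explicit the details the paper leaves implicit, and all of them check out.
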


\begin{theorem}\label{Not in the image of bilinear map}
Let $F$ be a field and $U, W$ be two vector spaces over $F$ with countable Hamel bases. Let $B: U \times U \to W$ be an alternating bilinear map over $F$ such that ${\rm span}(B(U \times U)) = W$. Let $w \in W$ be such that for each presentation $D$ of $w$, the graph $\Gamma(D)$ contains a bad cycle with unfavorable proximity. Then $w \notin B(U \times U)$.
\end{theorem}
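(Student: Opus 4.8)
The plan is to establish this statement as the \emph{negative} counterpart of Theorem \ref{without bad cycle for bilinear maps}, exploiting the dictionary between membership of $w$ in the image of $B$ and the existence of a consistent labeling. The paragraph preceding the theorem records the basic equivalence: $w \in B(U \times U)$ if and only if \emph{some} presentation $D$ of $w$ gives a graph $\Gamma(D)$ that admits a consistent labeling. I would take this equivalence as the backbone and argue by contraposition, showing instead that under the hypothesis \emph{no} presentation of $w$ can yield a consistent labeling.

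The first step is to observe that a field $F$ is in particular a local ring, with maximal ideal the zero ideal $\mathfrak m = (0)$. Consequently the residue map $x \mapsto [x]$ is the identity, so that $[x] = [0]$ exactly when $x = 0$, and the notion of an unfavorable proximity from Definition \ref{unfavorable proximity} becomes the concrete condition that the weights $d_{i_1,i_2}, \dots, d_{i_{r-2},i_{r-1}}, d_{i_1,i_r}$ along the relevant cycle all vanish while the weights $d_{i_{r-1},i_r}, d_{i_1,i_{r+1}}, \dots, d_{i_{r-2},i_{2r-2}}$ are all nonzero. This is precisely the situation to which Theorem \ref{consistent labeling doesn't exist} applies: over any local ring, a graph carrying a bad cycle with unfavorable proximity admits no consistent labeling.

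With these two ingredients in hand the argument closes immediately. By hypothesis, for \emph{every} presentation $D$ of $w$ the graph $\Gamma(D)$ contains a bad cycle with unfavorable proximity; applying Theorem \ref{consistent labeling doesn't exist} with $R = F$ to each such $D$ shows that $\Gamma(D)$ has no consistent labeling. Since this holds for all presentations of $w$, the equivalence of the first paragraph forces $w \notin B(U \times U)$.

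The only point that deserves genuine care, and where I expect any real work to lie, is the reverse half of the underlying equivalence, namely that $w \in B(U \times U)$ already \emph{produces} a presentation with a consistent labeling. Given $w = B(v, v')$, one expands $v = \sum_i \alpha_i u_i + (\text{terms in } U^{\perp})$ and $v' = \sum_i \beta_i u_i + (\text{terms in } U^{\perp})$ in the fixed basis; the alternating bilinearity computation carried out before the theorem gives $w = \sum_{i<j}(\alpha_i\beta_j - \alpha_j\beta_i)\, B(u_i, u_j)$, so the choice $d_{i,j} = \alpha_i\beta_j - \alpha_j\beta_i$ is a presentation of $w$ for which $v_k \mapsto (\alpha_k, \beta_k)$ is, by construction, a consistent labeling of $\Gamma(D)$. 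The bookkeeping to verify, that the index set $\mathcal I$ and the support of $D$ match the edge set of $\Gamma(D)$ correctly and that the $U^{\perp}$-components play no role because the $v_i$ are radical vectors, is routine but is the substance behind the word ``clearly'' in the stated equivalence; everything else is a direct citation of Theorem \ref{consistent labeling doesn't exist}.
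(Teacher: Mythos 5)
Your proposal is correct and follows essentially the same route as the paper: the paper likewise reduces the statement to the equivalence between $w \in B(U\times U)$ and the existence of a presentation whose graph admits a consistent labeling, and then cites Theorem \ref{consistent labeling doesn't exist} (valid over any local ring, hence over the field $F$ with $\mathfrak m = (0)$, with no restriction $F \ne \mathbb F_2$) to rule out consistent labelings for every presentation. Your added care about the forward direction of the equivalence — that $w = B(v,v')$ itself furnishes a presentation with the consistent labeling $v_k \mapsto (\alpha_k,\beta_k)$ — is exactly the content the paper leaves implicit in the discussion preceding the theorem.
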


Now, let us assume that $\mathcal B_2$ is a finite set and $|\mathcal B_2| = n$. Then we associate a graph $\Gamma(\mathcal B_U):= (V, E)$ to the vector space $U$ as follows:
\begin{align*}
&V := \{v_i : B(u_i, u_j) \neq 0 \text{ for some } u_j \in \mathcal{B}_2\},\\
&E:= \{e_{i,j} : B(u_i,u_j) \neq 0\},
\end{align*}
where $e_{i,j}$ is an edge between $v_i$ and $v_j$. The following theorem is a direct consequence of Theorem \ref{without bad cycle for bilinear maps}.

\begin{corollary}\label{Without bad cycle for whole vector space}
Let $F \ne \mathbb{F}_2$, $U$ be a vector space over $F$ and let $B: U \times U \to W$ be an alternating bilinear map over $F$ with finite-dimensional quotient space $U / U^{\perp}$. Let ${\rm span}(B(U \times U)) = W$ and $\mathcal{B}$ be a basis of $U / U^{\perp}$ such that the graph $\Gamma(\mathcal B_U)$ does not contain bad cycles, then $B(U\times U) =  W$.
\end{corollary}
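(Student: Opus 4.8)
The plan is to deduce this global statement from the element-wise Theorem \ref{without bad cycle for bilinear maps}, in exact parallel with the way Corollary \ref{Without bad cycle for whole group} is deduced from Theorem \ref{without bad cycle for p-groups elementwise}. The guiding idea is that, once the basis $\mathcal B_2$ of $U/U^{\perp}$ is fixed, every presentation graph attached to an element of $W$ lives inside the single ambient graph $\Gamma(\mathcal B_U)$; thus one hypothesis on $\Gamma(\mathcal B_U)$ simultaneously controls the graphs of all elements.

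First I would fix an arbitrary $w \in W$. Since ${\rm span}(B(U\times U)) = W$ and, as computed in the preamble of this section, $W = {\rm span}(\{B(u_i,u_j) : u_i,u_j \in \mathcal B_2,\ i<j\})$, the element $w$ admits at least one presentation $D$, namely a choice of scalars $d_{i,j} \in F$ with $w = \sum_{i<j} d_{i,j} B(u_i,u_j)$, together with its index set $\mathcal I$ and weighted graph $\Gamma(D)$. By the discussion preceding Theorem \ref{without bad cycle for bilinear maps}, it is enough to exhibit \emph{one} presentation of $w$ whose graph admits a consistent labeling. Now the edge set of $\Gamma(D)$ is by construction contained in $\{(i,j) : B(u_i,u_j)\ne 0\} = E(\mathcal B_U)$ and its active vertex set $\{v_i : i\in\mathcal I\}$ is contained in $V(\mathcal B_U)$, so $\Gamma(D)$ is a subgraph of $\Gamma(\mathcal B_U)$. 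Once one knows that $\Gamma(D)$ inherits the absence of bad cycles from $\Gamma(\mathcal B_U)$, Theorem \ref{without bad cycle for bilinear maps} (applicable because $F \ne \mathbb F_2$) gives $w \in B(U\times U)$; since $w$ was arbitrary, $B(U\times U) = W$.

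The hard part will be the passage from $\Gamma(\mathcal B_U)$ to the subgraph $\Gamma(D)$: deleting inactive vertices lowers degrees, and a vertex of degree $>2$ in $\Gamma(\mathcal B_U)$ could in principle drop to degree $2$ in $\Gamma(D)$, so the inheritance of ``no bad cycle'' is the one point that is not purely formal. The clean way to secure it is to refrain from using a minimal presentation of $w$ and instead choose, for the fixed $w$, a presentation whose graph is as large as possible — ideally realizing all of $\Gamma(\mathcal B_U)$, or at least a connected subgraph in which the degree configuration that rules out bad cycles is preserved — and then apply Theorem \ref{without bad cycle for bilinear maps} to that presentation. Verifying that such a presentation of $w$ always exists is therefore the only substantive step; with it in hand, the conclusion is a direct invocation of the element-wise theorem, and the remainder of the argument is routine.
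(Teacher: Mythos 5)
Your proposal follows the same route the paper intends: the paper records no proof beyond calling the corollary a direct consequence of Theorem \ref{without bad cycle for bilinear maps}, and its model argument for the group analogue, Corollary \ref{Without bad cycle for whole group}, is just the observation that ``$\Gamma(D)$ is a subgraph of $\Gamma(B_G)$.'' You correctly isolate the one non-formal point, which the paper itself elides: containing no bad cycle is \emph{not} a property inherited by (induced) subgraphs, because degrees drop. For instance, if $\Gamma(\mathcal B_U)$ is a triangle $v_1v_2v_3$ with one pendant vertex attached to $v_2$ and another attached to $v_3$, it has no bad cycle; but the induced subgraph on $\{v_1,v_2,v_3,v_4\}$ (the triangle with a single pendant at $v_2$) does, since there $\deg(v_2)=3$ while $\deg(v_1)=\deg(v_3)=2$.

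However, your proposed repair is left unverified, and as stated it fails. Under the paper's definition a vertex $v_i$ enters the presentation graph only if $d_{i,j}\neq 0$ for some $j$; so when the vectors $B(u_i,u_j)$ with $e_{i,j}\in E$ are linearly independent, the scalars $d_{i,j}$ of $w$ are unique and the presentation cannot be enlarged at all. Concretely, in the example above, take the five nonzero values $B(u_1,u_2)$, $B(u_1,u_3)$, $B(u_2,u_3)$, $B(u_2,u_4)$, $B(u_3,u_5)$ to be independent and $w=B(u_1,u_2)+B(u_1,u_3)+B(u_2,u_3)+B(u_2,u_4)$: every presentation graph of $w$ is the triangle-with-pendant and contains a bad cycle, so Theorem \ref{without bad cycle for bilinear maps} is simply silent about $w$ (one checks directly that $w=B(u_1+u_2,\,u_2+u_3+u_4)$ is nevertheless attained). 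The correct fix is not to hunt for a presentation with more nonzero coefficients but to sidestep the presentation formalism: the balance-equation machinery of \S\ref{Labeling of Graphs} distinguishes a zero weight from $\varepsilon$, so one may weight \emph{all} of $\Gamma(\mathcal B_U)$ by the $d_{i,j}$ extended by $0$ on the remaining edges, apply Theorem \ref{without bad cycle} (or Lemma \ref{consistent labeling for a triangle} when $\Gamma(\mathcal B_U)$ is a triangle) to that full weighted graph, and read off $x=\sum\alpha_iu_i$, $y=\sum\beta_iu_i$ with $B(x,y)=w$. With that substitution your argument closes; without it, the ``only substantive step'' you defer is a genuine gap.
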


The following theorem gives a necessary and sufficient condition for
$B(U\times U)) =  W$ in a specific case.

\begin{theorem}\label{iff condition for Bilinear maps}
Let $F \ne \mathbb{F}_2$, $U$ be a vector space over $F$ and let $B: U \times U \to W$ be an alternating bilinear map over $F$ with finite-dimensional quotient space $U / U^{\perp}$. Let ${\rm span}(B(U \times U)) = W$. Let $\mathcal{B}:= \{\overline{u_1}, \overline{u_2}, \cdots, \overline{u_n}\}$ be a basis of $U / U^{\perp}$ and let $\Gamma(\mathcal B_U) = (V,E)$ be the associated graph such that the set $\{B(u_i,u_j): e_{i,j} \in E\}$ forms a basis of $W$. Then $B(U \times U) = W$ if and only if $\Gamma(\mathcal B_U)$ does not contain bad cycles.
\end{theorem}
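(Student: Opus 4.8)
The plan is to prove the two implications separately, following closely the template of Theorem~\ref{iff condition for p-groups}, while exploiting that over a field the local-ring formalism simplifies considerably: here $F$ is itself its own residue field, so the residue condition $[d]=[0]$ simply reads $d=0$. The forward implication is immediate. If $\Gamma(\mathcal B_U)$ contains no bad cycle, then Corollary~\ref{Without bad cycle for whole vector space} (which is exactly where the hypothesis $F\neq\mathbb F_2$ enters) yields $B(U\times U)=W$ directly.

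For the converse I would assume $\Gamma(\mathcal B_U)$ contains a bad cycle $C_r$ and construct an explicit witness $w\in W\setminus B(U\times U)$. First I would fix a proximity $\mathcal P$ for $C_r$ and re-enumerate the relevant vertices so that $v_1,\dots,v_r$ trace out $C_r$ and, for each $1\le\ell\le r-2$, the vertex $v_{r+\ell}$ is a chosen neighbour of $v_\ell$ lying outside $C_r$ (such neighbours exist because $\deg_\Gamma(v_\ell)>2$ for $\ell\le r-2$ in a bad cycle). With $S:=\{(\ell,r+\ell):1\le\ell\le r-2\}\cup\{(r-1,r)\}$, I would set
$$w:=\sum_{(i,j)\in S}B(u_i,u_j)\in W.$$
The presentation $D$ arising from this expression assigns weight $1$ to each edge in $S$, weight $0$ to the remaining edges of $C_r$, and $\varepsilon$ to non-edges, so that $\Gamma(D)$ realizes an unfavorable proximity for $C_r$ in the sense of Definition~\ref{unfavorable proximity}.

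The crux is to verify that \emph{every} presentation of $w$ carries this same unfavorable proximity; this is where the hypothesis that $\{B(u_i,u_j):e_{i,j}\in E\}$ is a basis of $W$ does the decisive work, and in fact makes the argument strictly easier than its $p$-group analogue. Writing an arbitrary presentation as $w=\sum_{1\le i<j\le n}d_{i,j}B(u_i,u_j)$, the terms with $B(u_i,u_j)=0$ drop out, while the surviving vectors $B(u_i,u_j)$ indexed by edges are linearly independent; hence the edge-coefficients $d_{i,j}$ are \emph{uniquely} determined by $w$. Comparing with the defining expression forces $d_{i,j}=1$ for $(i,j)\in S$ and $d_{i,j}=0$ for the remaining cycle edges $\{(\ell,\ell+1):1\le\ell\le r-2\}\cup\{(1,r)\}$, which is precisely the unfavorable proximity condition. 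Thus linear independence replaces the congruence-and-minimality bookkeeping of the $p$-group proof entirely; no $\bmod\ p$ analysis is required.

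With this established, Theorem~\ref{Not in the image of bilinear map} applies to every presentation of $w$ and yields $w\notin B(U\times U)$; since $w\in W$, we conclude $B(U\times U)\neq W$, completing the converse. I expect the only genuine care to lie in the indexing bookkeeping: pinning down the re-enumeration of vertices so that the single cycle edge $(r-1,r)\in S$ together with the proximity edges $(\ell,r+\ell)$ receives nonzero weight, while all other cycle edges receive weight zero. Once the indices are fixed this is routine, and the uniqueness of coefficients removes any ambiguity among competing presentations.
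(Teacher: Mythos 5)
Your proposal is correct and follows essentially the same route as the paper, which itself proves this theorem by transporting the argument of Theorem~\ref{iff condition for p-groups}: the forward direction via Corollary~\ref{Without bad cycle for whole vector space}, and the converse by building the witness $w=\sum_{(i,j)\in S}B(u_i,u_j)$ from a proximity of a bad cycle and invoking Theorem~\ref{Not in the image of bilinear map}. Your observation that the basis hypothesis on $\{B(u_i,u_j):e_{i,j}\in E\}$ makes the edge-coefficients of any presentation unique, so that the mod-$p$ bookkeeping of the group case collapses, is a legitimate and clean simplification of that template.
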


\begin{proof} The proof follows through the arguments similar to the proof of Theorem \ref{iff condition for p-groups}.
\end{proof}

A construction analogous to Remark \ref{Remark for minimality condition for groups is necessary} and $\S$\ref{Remark from graph to group} can be carried out for the case of bilinear maps as well. One may use it to construct infinitely many examples when $B(U \times U) \neq W$, a result that is analogous to Corollary \ref{Infinite examples for K(G) not equal to G'}.

The Lie bracket is an alternating bilinear map for a Lie algebra $L$ over a field $F$. Thus, to study images of Lie brackets using above approach, we take $U:= L$; $W:= L'$, the derived Lie subalgebra of $L$; $B = [~,~]$, the Lie bracket of $L$; $U^\perp = Z(L)$, the center of $L$; and $\mathcal{B}_L := \mathcal B_1 \cup \mathcal B_2$, the vector space basis of $L$; where $\mathcal B_1 $ is a basis of $Z(L)$ and $\mathcal B_2 = \{u_1, u_2 \cdots \}$. The following theorems are direct consequences of the above theorems on alternating bilinear maps.

\begin{theorem}\label{without bad cycle for Lie algebras}
Let $F \neq \mathbb{F}_2$ be a field and $L$ be a Lie algebra over $F$ having a countable Hamel basis. Let $x \in L'$ be such that for some presentation $D$ of $x$, the graph $\Gamma(D)$ does not contain bad cycles. Then $x \in [L, L]$.
\end{theorem}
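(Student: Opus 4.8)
The plan is to obtain this theorem as a direct specialization of Theorem \ref{without bad cycle for bilinear maps}, using the dictionary between Lie algebras and alternating bilinear maps established in the paragraph preceding the statement. Concretely, I would set $U := L$, $W := L'$, and $B := [\,\cdot\,,\,\cdot\,]$, the Lie bracket of $L$, and then check that all the hypotheses of the bilinear-map theorem are met.

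First I would verify the structural hypotheses. The Lie bracket is bilinear and satisfies $[v,v] = 0$ for all $v \in L$, so $B$ is an alternating bilinear map. Since $L$ has a countable Hamel basis, so do $U = L$ and $W = L'$, the latter being a subspace of $L$. By definition the derived subalgebra $L'$ is the subspace spanned by all brackets $[v,v']$, so $\mathrm{span}(B(U \times U)) = W$, as required. Moreover, under this identification $U^{\perp} = \{v \in L : [v,v'] = 0 \text{ for all } v' \in L\}$ is exactly the center $Z(L)$, which matches the choice of $\mathcal{B}_1$ as a basis of $Z(L)$ and $\mathcal{B}_2 = \{u_1, u_2, \dots\}$ as a complementary set, precisely as in the setup for the Lie-algebra case.

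Next I would observe that the notion of a \emph{presentation} $D$ of an element $x \in L'$ coincides verbatim with the notion of a presentation of $w \in W$ in the bilinear-map setting: in both cases one expresses the element as $\sum_{i<j} d_{i,j} B(u_i, u_j)$ over the set $\mathcal{B}_2$, forms the same function $D : A(n) \to F \cup \{\varepsilon\}$, and attaches the same weighted graph $\Gamma(D)$. Consequently, the hypothesis that $\Gamma(D)$ contains no bad cycle for some presentation $D$ of $x$ is literally the hypothesis of Theorem \ref{without bad cycle for bilinear maps} applied to $w = x$. Invoking that theorem then gives $x \in B(U \times U) = \{[v,v'] : v, v' \in L\} = [L,L]$, which is the desired conclusion.

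I do not expect a genuine obstacle here, since the argument is purely a translation of notation. The only point requiring care is the bookkeeping at the final step: one must confirm that the image $B(U \times U)$ is precisely the set $[L,L]$ of bracket \emph{values}, not merely its span $L'$, so that membership in the image yields exactly the assertion $x \in [L,L]$ rather than the weaker $x \in L'$.
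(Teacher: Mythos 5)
Your proposal is correct and follows exactly the route the paper takes: the paper derives Theorem \ref{without bad cycle for Lie algebras} as a direct consequence of Theorem \ref{without bad cycle for bilinear maps} by setting $U := L$, $W := L'$, $B := [\,\cdot\,,\,\cdot\,]$, and $U^{\perp} = Z(L)$, with the notion of presentation carrying over verbatim. Your final remark distinguishing the image $B(U \times U) = [L,L]$ from its span $L'$ is the right point to be careful about, and it is handled the same way in the paper.
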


\begin{theorem}\label{Not in the image of Lie bracket}
Let $F \neq \mathbb{F}_2$ be a field and $L$ be a Lie algebra over $F$ having a countable Hamel basis. Let $x \in L'$ be such that for each presentation $D$ of $x$, the graph $\Gamma(D)$ contains a bad cycle with unfavorable proximity. Then $x \notin [L, L]$.
\end{theorem}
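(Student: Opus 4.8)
The plan is to deduce this directly from Theorem~\ref{Not in the image of bilinear map} by instantiating the alternating bilinear map there with the Lie bracket, precisely as set up in the paragraph preceding the statement. The whole argument is a translation of the Lie-theoretic data into the language of alternating bilinear maps, followed by a single application of the already-proved theorem.

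First I would put $U := L$, $W := L'$, and $B := [\,,\,]$, and verify the hypotheses of Theorem~\ref{Not in the image of bilinear map}. The bracket is $F$-bilinear and alternating (since $[y,y] = 0$ for every $y \in L$), so $B$ is an alternating bilinear map. By the definition of the derived subalgebra, $W = L' = {\rm span}\{[y,y'] : y, y' \in L\} = {\rm span}(B(L \times L))$, so the spanning hypothesis ${\rm span}(B(U \times U)) = W$ holds. Since $L$ has a countable Hamel basis by assumption, its subspace $W = L'$ also has a countable Hamel basis, so both $U$ and $W$ meet the countability requirement.

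Next I would match the combinatorial data carefully. Here $U^\perp = \{v \in L : [v,v'] = 0 \text{ for all } v' \in L\}$ is exactly the center $Z(L)$, and the distinguished basis $\mathcal{B}_L = \mathcal{B}_1 \cup \mathcal{B}_2$ of $L$ (with $\mathcal{B}_1$ a basis of $Z(L)$) plays the role of the basis $\mathcal{B} = \mathcal{B}_1 \cup \mathcal{B}_2$ of $U$. With these identifications, for $x \in L'$ an expression $x = \sum_{1 \leq i < j \leq r} d_{i,j}[u_i, u_j]$ is literally the same as the expression $w = \sum_{1 \leq i < j \leq r} d_{i,j} B(u_i, u_j)$ for $w = x \in W$. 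Consequently the presentation functions $D$ of $x$ as a bracket element coincide with the presentation functions of $x$ as an element of $W$, and so do the associated weighted graphs $\Gamma(D)$ and the positions of their bad cycles and proximities.

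Finally, the hypothesis says that for \emph{every} presentation $D$ of $x$ the graph $\Gamma(D)$ contains a bad cycle with unfavorable proximity, which is exactly the hypothesis of Theorem~\ref{Not in the image of bilinear map} applied to $w = x$; that theorem then yields $x \notin B(L \times L) = [L,L]$. There is no genuine obstacle in this argument: the only point requiring care is confirming that the dictionary between the Lie-theoretic notions ($L'$, $Z(L)$, presentations of $x$) and the bilinear-map notions ($W$, $U^\perp$, presentations of $w$) is an exact correspondence, after which the conclusion is immediate. I would also remark that the restriction $F \neq \mathbb{F}_2$ is not actually needed here, since Theorem~\ref{Not in the image of bilinear map} holds over an arbitrary field; it is retained only for uniformity with the companion Theorem~\ref{without bad cycle for Lie algebras}.
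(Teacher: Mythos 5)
Your proposal is correct and is essentially the paper's own argument: the paper states this theorem as a direct consequence of Theorem~\ref{Not in the image of bilinear map} via exactly the dictionary $U=L$, $W=L'$, $B=[\,,\,]$, $U^\perp=Z(L)$ that you spell out. Your side remark that $F\neq\mathbb{F}_2$ is not needed here is also consistent with the paper, whose introduction states the corresponding Theorem~D for an arbitrary field.
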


Following are the direct consequences of Corollary \ref{Without bad cycle for whole vector space} and Theorem \ref{iff condition for Bilinear maps}. 

\begin{corollary}\label{Without bad cycle for whole Lie algebra}
Let $F \ne \mathbb{F}_2$ and $L$ be a Lie algebra over $F$ with finite-dimensional quotient Lie algebra $L / Z(L)$. Let $\mathcal{B}_L$ be a basis of $L / Z(L)$ such that the graph $\Gamma(\mathcal B_L)$ does not contain bad cycles, then $[L, L] = L'$.  
\end{corollary}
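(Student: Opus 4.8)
The plan is to obtain this corollary as a direct specialization of Corollary \ref{Without bad cycle for whole vector space}, using the dictionary already set up for Lie algebras: take $U := L$, $W := L'$, $B := [\,\cdot\,,\cdot\,]$, $U^{\perp} := Z(L)$, and $\mathcal{B}_U := \mathcal{B}_L$. Under these choices the assertion $B(U \times U) = W$ reads verbatim as $[L,L] = L'$, so the entire content of the corollary is that the Lie-theoretic data satisfy the hypotheses of the bilinear-map version.

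First I would verify that $B = [\,\cdot\,,\cdot\,]$ is an alternating $F$-bilinear map $L \times L \to L'$, which is immediate from the Lie algebra axioms. Next I would identify the radical of $B$: by definition $U^{\perp} = \{v \in L : [v,v'] = 0 \text{ for all } v' \in L\}$ is exactly the center $Z(L)$, whence $U/U^{\perp} = L/Z(L)$ is finite-dimensional by hypothesis. I would also record that ${\rm span}(B(U \times U)) = {\rm span}\{[x,y] : x,y \in L\} = L'$, since the derived subalgebra is precisely the subspace spanned by all brackets; in particular $W = L'$ is finite-dimensional, being spanned by the finitely many brackets $[u_i,u_j]$ with $u_i, u_j \in \mathcal{B}_2$. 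This confirms that the framework of \S\ref{section on bilinear maps and Lie algebra} applies and that the graph $\Gamma(\mathcal{B}_U)$ associated to $B$ coincides with $\Gamma(\mathcal{B}_L)$, which is bad-cycle-free by assumption. All hypotheses of Corollary \ref{Without bad cycle for whole vector space} are therefore met, and it yields $B(U \times U) = W$, that is, $[L,L] = L'$.

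Since every genuine difficulty is already absorbed into Corollary \ref{Without bad cycle for whole vector space}---namely the reduction to a single presentation $D$, the fact that each $\Gamma(D)$ embeds as a subgraph of the fixed graph $\Gamma(\mathcal{B}_L)$, and the consistent-labeling machinery of Theorem \ref{without bad cycle}---I do not expect a substantive obstacle at this stage. The only points demanding any care are the essential but routine identifications $U^{\perp} = Z(L)$ and ${\rm span}(B(U \times U)) = L'$, together with the observation that finite-dimensionality of $L/Z(L)$ is precisely the hypothesis needed to invoke the bilinear-map corollary.
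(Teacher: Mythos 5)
Your proposal is correct and is essentially the paper's own argument: the paper derives this corollary by exactly the specialization $U := L$, $W := L'$, $B := [\,\cdot\,,\cdot\,]$, $U^{\perp} = Z(L)$ and then invokes Corollary \ref{Without bad cycle for whole vector space}. The identifications you flag as needing care (center as the radical of the bracket, $L'$ as the span of brackets) are precisely the dictionary the paper records at the end of \S\ref{section on bilinear maps and Lie algebra}.
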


\begin{theorem}\label{iff condition for Lie algebra}
Let $F \ne \mathbb{F}_2$ and $L$ be a Lie algebra over $F$ with finite-dimensional quotient Lie algebra $L / Z(L)$. Let $\mathcal{B}_L:= \{\overline{u_1}, \overline{u_2}, \cdots, \overline{u_n}\}$ be a basis of $L / Z(L)$ and let $\Gamma(\mathcal B_L) = (V,E)$ be the associated graph such that the set $\{[u_i,u_j]: e_{i,j} \in E\}$ forms a basis of $L'$. Then $[L, L] = L'$ if and only if $\Gamma(\mathcal B_L)$ does not contain bad cycles.
\end{theorem}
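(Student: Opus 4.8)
The plan is to obtain this theorem as a specialization of Theorem \ref{iff condition for Bilinear maps}, following the dictionary set up just before Theorem \ref{without bad cycle for Lie algebras}: take $U := L$, $W := L'$, the alternating bilinear map $B := [\,\cdot\,,\,\cdot\,]$, and $U^{\perp} = Z(L)$, so that $\mathcal{B}_L$ plays the role of $\mathcal{B}_2$ and $\Gamma(\mathcal{B}_L)$ coincides with $\Gamma(\mathcal{B}_U)$. The finite-dimensionality of $L/Z(L)$ makes $\mathcal{B}_L$ finite, and the hypothesis that $\{[u_i,u_j] : e_{i,j} \in E\}$ is a basis of $L'$ is exactly the basis hypothesis of Theorem \ref{iff condition for Bilinear maps}. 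Under this translation the statement transfers directly, so the core task is to verify that the proof of Theorem \ref{iff condition for Bilinear maps}---which itself parallels Theorem \ref{iff condition for p-groups}---applies in this setting.

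For completeness I would also spell out the direct argument, which in the Lie setting is in fact cleaner than its group counterpart. The forward implication is immediate from Corollary \ref{Without bad cycle for whole Lie algebra}: if $\Gamma(\mathcal{B}_L)$ contains no bad cycle, then $[L,L] = L'$. For the converse I would assume $\Gamma(\mathcal{B}_L)$ contains a bad cycle $C_r$, fix a proximity $\mathcal{P}$ for it, and relabel its vertices $v_1,\dots,v_{2r-2}$ so that $v_1,\dots,v_r$ form $C_r$ and $v_{r+\ell}$ is adjacent to $v_\ell$ for $\ell \in \{1,\dots,r-2\}$. Setting
\[
S := \{(i,r+i) : 1 \le i \le r-2\} \cup \{(r-1,r)\}
\qquad\text{and}\qquad
x := \sum_{(i,j)\in S}[u_i,u_j],
\]
we have $x \in L'$, and the goal is to prove $x \notin [L,L]$.

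The decisive step---and the place where the Lie algebra hypothesis genuinely simplifies the group argument---is that $\{[u_i,u_j] : e_{i,j} \in E\}$ is a \emph{basis} of $L'$, so the scalars in any expression $x = \sum_{i<j} d_{i,j}[u_i,u_j]$ are uniquely determined: $d_{i,j} = 1$ for $(i,j) \in S$ and $d_{i,j} = 0$ on every other edge. Because the base ring is a field, the residue condition $[d] = [0]$ means simply $d = 0$; hence this unique expression forces the weights on the cycle edges $(1,2),\dots,(r-2,r-1),(1,r)$ to vanish while the spoke weights $d_{1,r+1},\dots,d_{r-2,2r-2}$ and the closing weight $d_{r-1,r}$ stay nonzero. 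This is exactly an unfavorable proximity in the sense of Definition \ref{unfavorable proximity}, so every presentation $D$ of $x$ produces a graph $\Gamma(D)$ whose bad cycle $C_r$ carries an unfavorable proximity. Theorem \ref{Not in the image of Lie bracket} then yields $x \notin [L,L]$, whence $[L,L] \ne L'$.

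The main obstacle I anticipate lies not in the core computation but in confirming that the freedom hidden in the word ``presentation'' cannot manufacture a graph free of an unfavorable proximity. Concretely, I would check that altering $d_{i,j}$ on non-edges (where $[u_i,u_j] = 0$, so $D$ records $\varepsilon$) changes nothing, and that the permutation sorting $\mathcal{I}$ merely relabels vertices and preserves the presence of an unfavorable proximity. This is where the contrast with Theorem \ref{iff condition for p-groups} is instructive: there the set $E_G$ is only a minimal generating set, so uniqueness holds only modulo $p$ and one must argue with congruences; here the basis hypothesis gives genuine uniqueness of coefficients, removing that layer of the argument entirely.
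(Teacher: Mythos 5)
Your proposal is correct and follows essentially the same route as the paper, which derives this theorem as a direct consequence of Theorem \ref{iff condition for Bilinear maps} (itself proved by mirroring Theorem \ref{iff condition for p-groups}). Your explicit observation that the basis hypothesis makes the coefficients $d_{i,j}$ genuinely unique---so the mod-$p$ congruence argument of the group case collapses to exact equalities---is a faithful and slightly cleaner rendering of the same argument, not a different one.
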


A construction analogous to Remark \ref{Remark for minimality condition for groups is necessary} and $\S$\ref{Remark from graph to group} can be carried out for the case of Lie algebras as well. One may use it to construct infinitely many examples when $[L, L] \neq L'$, a result that is analogous to Corollary \ref{Infinite examples for K(G) not equal to G'}.
\bibliographystyle{amsalpha}
\bibliography{word-maps}

\end{document}